\theoremstyle{plain}
\newtheorem{theorem}{Theorem}[section] 
\newtheorem{lemma}[theorem]{Lemma} 
\newtheorem{corollary}[theorem]{Corollary} 
\newtheorem{proposition}[theorem]{Proposition} 
\newtheorem{claim}{Claim}
\newenvironment{customclaim}[1]
  {\innercustomclaim}
  {\endinnercustomclaim}
\theoremstyle{definition}
\newtheorem{definition}{Definition}[section] 
\newtheorem{example}{Example}[section] 
\newtheorem{observation}{Observation}[section]
\newtheorem*{remark}{Remark}
\newcommand{\tuple}[1]{\vec{#1}} 
\newcommand{\rec}[2]{[#1\triangleright#2\,]} 
\DeclareMathOperator{\true}{\vDash}	
\DeclareMathOperator{\ntrue}{\nvDash} 
\DeclareMathOperator{\dom}{dom} 
\DeclareMathOperator{\ran}{ran} 
\DeclareMathOperator{\im}{ran} 
\DeclareMathOperator{\vr}{Vr} 
\DeclareMathOperator{\fr}{Fr} 
\DeclareMathOperator{\subf}{Sf}  
\DeclareMathOperator{\Tset}{T_\text{$L$}} 
\DeclareMathOperator{\FOset}{FO_\text{$L$}} 
\DeclareMathOperator{\INCset}{INC_\text{$L$}} 
\DeclareMathOperator{\EXCset}{EXC_\text{$L$}} 
\DeclareMathOperator{\INEXset}{INEX_\text{$L$}} 
\DeclareMathOperator{\ESOset}{ESO_\text{$L$}} 
\DeclareMathOperator{\ESO}{ESO}
\DeclareMathOperator{\EMSO}{EMSO}
\DeclareMathOperator{\INC}{INC}
\DeclareMathOperator{\EXC}{EXC}
\DeclareMathOperator{\INEX}{INEX}
\DeclareMathOperator{\inc}{\text{$\subseteq$}} 
\DeclareMathOperator{\exc}{\text{$\mid$}} 
\DeclareMathOperator{\ince}{\text{$\subseteq^{e}$}} 
\DeclareMathOperator{\dep}{\text{$=$}} 
\DeclareMathOperator{\Ee}{\exists} 
\DeclareMathOperator{\Es}{\exists} 
\DeclareMathOperator{\Ae}{\forall} 
\newcommand*{\abs}[1]{\left\lvert#1\right\rvert}   
\newcommand{\apal}[1]{}
\begin{document}

\title{Capturing $k$-ary Existential Second Order Logic \\ with $k$-ary Inclusion-Exclusion Logic\thanks{This paper is an extended version of  \cite{Ronnholm18} with additional technical details.}}
\author{Raine Rönnholm \\ \small University of Tampere}
\date{}
\maketitle

\begin{abstract}
In this paper we analyze $k$-ary inclusion-exclusion logic, INEX[$k$], which is obtained by extending first order logic with $k$-ary inclusion and exclusion atoms. We show that every formula of INEX[$k$] can be expressed with a formula of $k$-ary existential second order logic, ESO[$k$]. Conversely, every formula of ESO[$k$] with at most $k$-ary free relation variables can be expressed with a formula of INEX[$k$]. From this it follows that, on the level of sentences, INEX[$k$] captures the expressive power of ESO[$k$]. 

We also introduce several useful operators that can be expressed in INEX[$k$]. In particular, we define inclusion and exclusion quantifiers and so-called term value preserving disjunction which is essential for the proofs of the main results in this paper. Furthermore, we present a novel method of relativization for team semantics and analyze the duality of inclusion and exclusion atoms.

\medskip
Keywords:
inclusion logic, exclusion logic, dependence logic, team semantics, IF-logic, existential second order logic, expressive power.
\end{abstract}



\section{Introduction}

The origin of inclusion and exclusion logics lies in the notion of dependence and imperfect information in logic. First approaches in this area were \emph{partially ordered quantifiers} by Henkin \cite{Henkin61} and \emph{IF-logic} (independence friendly logic) by Hintikka and Sandu \cite{Hintikka89}. The truth for IF-logic was originally defined by using semantic games of imperfect information (\cite{Hintikka97}), but an equivalent compositional semantics was presented later by Hodges \cite{Hodges97}. However, in the compositional approach it is not sufficient to consider single assignments, but instead sets of assignments which are called \emph{teams}. 

Teams can be seen as parallel positions in a semantic game, or can be interpreted as information sets or as databases (\cite{Vaananen07}).
By using similar \emph{team semantics} as Hodges, Väänänen \cite{Vaananen07} introduced \emph{dependence logic} which extends first order logic with new atomic formulas called \emph{dependence atoms}. Later Grädel and Väänänen \cite{Gradel12} presented \emph{independence logic} by analogously adding \emph{independence atoms} to first order logic. The truth conditions for these atoms are defined by dependencies/independencies of the values of terms in a team. These logics have been recently studied actively with an attempt to formalize the dependency phenomena in different fields of science. There has been research 
in several areas such as database dependency theory (\cite{Kontinen13b}), belief presentation~(\cite{Galliani12a}) and quantum mechanics (\cite{Paolini15}). 


Inclusion and exclusion logics were first presented by Galliani \cite{Galliani12b}. They extend first order logic with \emph{inclusion and exclusion atoms} as dependence atoms in dependence logic. Suppose that $\tuple{\vphantom\wedge\smash{t}}_1,\tuple{\vphantom\wedge\smash{t}}_2$ are $k$-tuples of terms and $X$ is a team. The $k$-ary inclusion atom $\tuple{\vphantom\wedge\smash{t}}_1\inc\tuple{\vphantom\wedge\smash{t}}_2$ says that the values of $\tuple{\vphantom\wedge\smash{t}}_1$ are included in the values of $\tuple{\vphantom\wedge\smash{t}}_2$ in the team $X$. The $k$-ary exclusion atom $\tuple{\vphantom\wedge\smash{t}}_1\,|\,\tuple{\vphantom\wedge\smash{t}}_2$ analogously says that $\tuple{\vphantom\wedge\smash{t}}_1$ and $\tuple{\vphantom\wedge\smash{t}}_2$ get distinct values in $X$. These are simple and natural dependencies in database theory (\cite{Galliani12b}), and thus it is reasonable to consider such atoms in a team semantical setting. 


Inclusion and exclusion atoms have some natural complementary properties. Exclusion logic is known to be \emph{closed downwards} (\cite{Galliani12b}), i.e. if a team satisfies some formula, then also all of its subteams satisfy it. Inclusion logic, on the other hand, is known to be \emph{closed under unions} (\cite{Galliani12b}), i.e. if each team in a set of teams satisfies a formula, then also their union satisfies it. However, neither of these logics is both closed downwards and under unions. Therefore the combination of these logics, \emph{inclusion-exclusion logic}, has neither of these properties.

Exclusion logic is equivalent with dependence logic (\cite{Galliani12b}) which captures \emph{existential second order logic}, ESO, on the level of sentences (\cite{Vaananen07}). Inclusion logic is not comparable with dependence logic in general (\cite{Galliani12b}), but captures \emph{positive greatest fixed point logic} on the level of sentences, as shown by Galliani and Hella~\cite{Hella13}. Hence exclusion logic captures NP, and inclusion logic captures PTIME over finite structures with linear order. Inclusion-exclusion logic has been shown to be equivalent with independence logic by Galliani~\cite{Galliani12b}. Galliani has also shown in \cite{Galliani12b} that with inclusion-exclusion logic it is possible to define exactly those properties of teams which are definable in ESO.  Thus we can say that inclusion-exclusion logic captures ESO \emph{on the level of formulas}. 

\medskip

By these earlier results, we see that the expressive power of inclusion-exclusion logic is rather strong. Instead of studying this whole logic, we will consider its weaker fragments. One of the most canonical approaches is to restrict the arities of inclusion and exclusion atoms. In particular, unary atoms are much simpler than inclusion and exclusion atoms in general. Hannula \cite{Hannula14} has shown that inclusion logic has a strict arity hierarchy over graphs, but it is still open what is the exact fragment of ESO that corresponds to $k$-ary inclusion logic, INC[$k$]. Before our work similar research has not been done for exclusion- nor for inclusion-exclusion logic. Our main research question for this paper was to examine whether there is some natural fragment of ESO that corresponds to \emph{unary} inclusion-exclusion logic, INEX[$1$].

Similar research has been done on the related logics: Durand and Kontinen \cite{Durand12} have shown that, on the level of sentences, $k$-ary dependence logic captures the fragment of  ESO in which at most ($k$$-$$1$)-ary functions can be quantified. Galliani, Hannula and Kontinen \cite{Kontinen13a} have shown that the same result holds also for $k$-ary independence logic. The arity hierarchy of ESO (over arbitrary vocabulary) is known to be strict, as shown by Ajtai \cite{Ajtai83} in 1983. Consequently dependence and independence logics have a strict arity hierarchy over sentences. 

These earlier results, however, do not tell much about the expressive power of $k$-ary exclusion logic, EXC[$k$], and $k$-ary inclusion-exclusion logic, INEX[$k$], since the known translations from them to dependence and independence logics do not respect the arities of atoms. Also, since these results are proven on the level of sentences, we do not know much how does the arity affect the expressive power of these logics on the level of formulas.

\medskip

We will show in Subsection \ref{ssec: Expressing INEX} that every formula of EXC[$k$] can be expressed with a formula of $k$-ary ESO, ESO[$k$]. The idea of this compositional translation is that for each occurrence of an exclusion atom $\tuple{\vphantom\wedge\smash{t}}_1\,|\,\tuple{\vphantom\wedge\smash{t}}_2$ we quantify a separate $k$-ary relation variable that gives limits to the values that the tuple $\tuple{\vphantom\wedge\smash{t}}_1$ can get and $\tuple{\vphantom\wedge\smash{t}}_2$ cannot. We can formulate a similar, yet more complex, translation for INC[$k$] and then merge these two translations to create a translation from INEX[$k$] to ESO[$k$].

In Subsection \ref{ssec: Expressing ESO} we will show that all ESO[$k$]-formulas that contain at most $k$-ary free relation variables can be expressed with a formula of INEX[$k$]. The translation we use here is compositional, very natural and uses inclusion and exclusion atoms in a dualistic way: The quantified $k$-ary relation variables $P_i$ are just replaced with $k$-tuples $\tuple w_i$ of quantified first order variables. Then we simply replace atomic formulas of the form $P_i\tuple{\vphantom\wedge\smash{t}}$ with inclusion atoms $\tuple{\vphantom\wedge\smash{t}}\inc\tuple w_i$ and formulas of the form $\neg P_i\tuple{\vphantom\wedge\smash{t}}$ with exclusion atoms $\tuple{\vphantom\wedge\smash{t}}\;|\,\tuple w_i$.

In order to get make this last translation compositional, we also need a new operator called \emph{term value preserving disjunction} which is introduced in Subsection \ref{ssec: Term value preserving disjunction}. We will show that this operator can be expressed with inclusion and exclusion atoms, and furthermore when preserving values of $k$-tuples, it can be defined in INEX[$k$]. We will also explain in Subsection~\ref{ssec: Term value preserving disjunction} why this is a useful operator for the framework of team semantics in general.

From our results it follows that, on the level of sentences, INEX[$k$] captures the expressive power of ESO[$k$]. In particular, by using only unary inclusion and exclusion atoms we get the expressive power of \emph{existential monadic second order logic}, EMSO. This special case should be noted for the following reason: As a consequence of the results mentioned above (\cite{Durand12,Kontinen13a}), if we extend FO with 1-ary dependence (or~independence) atoms, the expressive power stays inside~FO. But if we extend FO with 2-ary dependence (or~independence) atoms, the expressive power becomes already stronger than EMSO. Thus INEX[$1$] deserves extra recognition by capturing this important fragment of ESO that has not yet been characterized in the framework of team semantics.

\medskip

In addition to our main results, we also analyze the nature of inclusion and exclusion logics and their relationship more deeply. Even though inclusion and exclusion atoms are not contradictory negations of each other, we claim that they can be seen as duals of each other and thus they make a natural pair. This is one more reason why inclusion-exclusion logic can be seen as a quite canonical logic for the framework of team semantics.

We also analyze inclusion and exclusion relations from an another perspective by introducing \emph{inclusion and exclusion quantifiers}.  This can be seen as a step back to the origin of these logics, since dependence logic was inspired by IF-logic, in which dependencies were handled with quantification.
In Subsection \ref{ssec: Quantifiers} we first define natural semantics for inclusion and exclusion quantifiers and then show that we can express them in inclusion-exclusion logic. We also show reversely that, by extending first order logic with these quantifiers, we obtain an equivalent logic with inclusion-exclusion logic. However, there are still some small, yet intriguing, differences between these two approaches.

By using several of our new operators -- term value preserving disjunction and both existential and universal inclusion quantifiers -- we can introduce a novel method of \emph{relativization} for team semantics. This technique is introduced in Subsection~\ref{ssec: Relativization} and later, in Section \ref{sec: Examples}, we present further examples on how it can be applied. In Section \ref{sec: Examples} we also present some other concrete examples where we show how to use our translations and new operators to express some classical properties of models and teams in a rather straightforward way. 

\medskip

The structure of this paper is as follows: In Section \ref{sec: Preliminaries} we review team semantics for FO and define inclusion and exclusion logics. In Section \ref{sec: New operators} we define several useful operators for inclusion-exclusion logic -- such as inclusion and exclusion quantifiers and term value preserving disjunction. In Section~\ref{sec: Translations} we present our translations between $\INEX[k]$ and $\ESO[k]$, and in Section \ref{sec: Examples} we present some further examples. After the conclusion in Section \ref{sec: Conclusion}, there is an appendix where we present a single long and technical proof that has been omitted from the main text.


\newpage

\section{Preliminaries}\label{sec: Preliminaries}

In this section we will first define the syntax and the semantics for first order logic. Instead of the usual Tarski semantics we will present team semantics which turns out to be an essentially equivalent way of defining the truth in the first order case.
Then we present inclusion and exclusion logics, define team semantics for them and review some of their know properties.


\subsection{Syntax and team semantics for first order logic}


Let $\{v_i\,|\, i\in\mathbb{N}\}$ be a set of \emph{variables}. We use symbols $\{x,y,z,\dots\}$ to denote meta variables ranging over the set of variables. A \emph{vocabulary} $L$ is a set of \emph{relation symbols}~$R$, \emph{function symbols} $f$ and \emph{constant symbols} $c$.
We denote the set of $L$-terms by $\Tset$. If $\tuple{\vphantom\wedge\smash{t}}=t_1\dots t_k$ and $t_i\in\Tset$ for each $i\leq k$, we write $\tuple{\vphantom\wedge\smash{t}}\in\Tset$. The set of variables occurring in a term $t$ is denoted by $\vr(t)$. For a tuple $\tuple{\vphantom\wedge\smash{t}}=t_1\dots t_k$ of $L$-terms we write $\vr(\tuple{\vphantom\wedge\smash{t}}\,) := \vr(t_1)\cup\cdots\cup\vr(t_k)$.
Next we define the syntax for first order logic (FO):
\begin{definition}\label{def: FOset}
The language $\FOset$ is the smallest set $\mathcal{S}$ satisfying the following conditions:
\begin{itemize}
\item If $t_1,t_2\in\Tset$, then $t_1=t_2\in\mathcal{S}$ and $\neg t_1=t_2\in\mathcal{S}$.
\item If $\tuple{\vphantom\wedge\smash{t}}\in\Tset$ is a $k$-tuple and $R\in L$ is a $k$-ary relation symbol, \\ then $R\,\tuple{\vphantom\wedge\smash{t}}\in\mathcal{S}$ and $\neg R\,\tuple{\vphantom\wedge\smash{t}}\in\mathcal{S}$.
\item If $\varphi,\psi\in\mathcal{S}$, then $(\varphi\wedge\psi)\in\mathcal{S}$ and $(\varphi\vee\psi)\in\mathcal{S}$.
\item If $\varphi\in\mathcal{S}$ and $x$ is a variable, then $\Es x\,\varphi\in\mathcal{S}$ and $\Ae x\,\varphi\in\mathcal{S}$.
\end{itemize}
$\FOset$-formulas of the form $t_1=t_2$, $\neg t_1=t_2$, $R\,\tuple{\vphantom\wedge\smash{t}}$ and $\neg R\,\tuple{\vphantom\wedge\smash{t}}$ are called \emph{literals}. Note that we only allow formulas in the \emph{negation normal form}.
\end{definition}
We denote the set of subformulas of an $\FOset$-formula $\varphi$ by $\subf(\varphi)$, the set of variables occurring in $\varphi$ by $\vr(\varphi)$ and the set of free variables of $\varphi$ by $\fr(\varphi)$. 
If we have $\fr(\varphi)=\{x_1,\dots,x_n\}$, we can emphasize this by writing $\varphi$ as $\varphi(x_1\dots x_n)$.
\begin{remark}
When we say that $\tuple x$ is tuple of \emph{fresh variables} we mean that all variables in $\tuple x$ are distinct and not occur in the variables of any formulas or terms that we have mentioned in the assumptions.
\end{remark}

An \emph{$L$-model} $\mathcal{M}$ is a pair $(M,\mathcal{I})$, where the \emph{universe} $M$ is a nonempty set and the \emph{interpretation} $\mathcal{I}$ is a function defined in the vocabulary~$L$. The interpretation $\mathcal{I}$ maps constant symbols to elements in $M$, $k$-ary relation symbols to $k$-ary relations in $M$ and $k$-ary function symbols to functions $M^k\rightarrow M$. For all $k\in L$ we write $k^\mathcal{M}:=\mathcal{I}(k)$.

Let $\mathcal{M}=(M,\mathcal{I})$ be an $L$-model. An \emph{assignment} $s$ for $M$ is a function that is defined in some set of variables and ranges over the universe $M$. The domain of $s$ is denoted by $\dom(s)$. 
A \emph{team} $X$ for $M$ is any set of assignments for $M$ with a common domain, denoted by $\dom(X)$. In the literature usually only teams with finite domains have been considered, but for this paper there is no need to assume the domains of teams to be finite. If $X$ is a team for the universe of $\mathcal{M}$ we can also say that $X$ is a team for the model $\mathcal{M}$.
Note that we also allow the empty assignment $s=\emptyset$ and the empty team $X=\emptyset$. For the empty team we allow \emph{any of set variables} to be interpreted as its domain (this is practical for certain technical reasons). The empty team is not to be confused with the team $X=\{\emptyset\}$ which has a special role with $\FOset$-sentences.

Let $s$ be an assignment and $a\in M$. The assignment $s[a/x]$ is defined in $ \dom(s)\cup\{x\}$, and it maps the variable $x$ to $a$ and all other variables as the assignment $s$.  
Let $X$ be a team, $A\subseteq  M$ and $F:X\rightarrow \mathcal{P}(M)$. We write
\begin{align*}
	X[A/x] &:= \{s[a/x] \mid s\in X, \,a\in A\} \\
	X[F/x] &:= \{s[a/x] \mid s\in X, \,a\in F(s)\}.
\end{align*}

Next we generalize these notations for tuples of variables.
Let $s$ be an assignment, $\tuple x:= x_1\dots x_k$ a tuple of variables and $\tuple a := (a_1,\dots,a_k)\in M^k$. We use the notation $s[\tuple a/\tuple x\,] := s[a_1/x_1,\dots,a_k/x_k]$. For a team $X$, a set $A\subseteq M^k$ and a function $\mathcal{F}:X\rightarrow\mathcal{P}(M^k)$ we write
\begin{align*}
	X[A/\tuple x\,]  &:= \{s[\tuple a/\tuple x\,]\mid s\in X,\,\tuple a\in A\} \\
	X[\mathcal{F}/\tuple x\,]  &:= \{s[\tuple a/\tuple x\,]\mid s\in X,\,\tuple a\in\mathcal{F}(s)\}.
\end{align*}

Let $\mathcal{M}$ be an $L$-model, $s$ an assignment and $t\in\Tset$ s.t. $\vr(t)\subseteq\dom(s)$. The interpretation of $t$ with respect to $\mathcal{M}$ and $s$, $t^\mathcal{M}\langle s\rangle$, is denoted simply by $s(t)$. 
For a team $X$ and $t\in\Tset$ s.t. $\vr(t)\subseteq\dom(X)$ we write
$X(t) :=\{s(t) \mid s\in X\}$.
Let $\tuple{\vphantom\wedge\smash{t}}:= t_1\dots t_k$ be a tuple of $L$-terms and let $X$ be a team s.t. $\vr(\tuple{\vphantom\wedge\smash{t}\,})\subseteq\dom(X)$. We write
\begin{align*}
	s(\tuple{\vphantom\wedge\smash{t}}\,) := \,(s(t_1),\dots,s(t_k)) \quad\text{ and }\quad
	X(\tuple{\vphantom\wedge\smash{t}}\,) := \{s(\tuple{\vphantom\wedge\smash{t}}\,)\mid s\in X\}.
\end{align*}
Note that $s(\tuple{\vphantom\wedge\smash{t}}\,)$ is a vector in $M$ and $X(\tuple{\vphantom\wedge\smash{t}}\,)$ is a $k$-ary relation in $M$.  
We use the notation $\mathcal{P}^*(A)$ for the power set of $A$ excluding the empty set (that is $\mathcal{P}^*(A) := \mathcal{P}(A)\,\setminus\,\{\emptyset\}$). 
We are now ready to define \emph{team semantics} for FO.

\begin{definition}\label{def: Team semantics}
Let $\mathcal{M}$ be an $L$-model, $\varphi\in\FOset$ and $X$ a team such that $\fr(\varphi)\subseteq\dom(X)$. We define the \emph{truth of $\varphi$ in $\mathcal{M}$ and~$X$}, denoted by $\mathcal{M}\true_X\varphi$:
\begin{itemize}[leftmargin=*]
\item $\mathcal{M}\true_X\ t_1\!=\!t_2$\, iff \,$s(t_1)=s(t_2)$ for all $s\in X$.
\item $\mathcal{M}\true_X \neg t_1\!=\!t_2$\, iff \,$s(t_1)\neq s(t_2)$ for all $s\in X$.
\item $\mathcal{M}\true_X R\,\tuple{\vphantom\wedge\smash{t}}$\, iff \,$s(\tuple{\vphantom\wedge\smash{t}}\,)\in R^{\mathcal{M}}$ for all $s\in X$.
\item $\mathcal{M}\true_X \neg R\,\tuple{\vphantom\wedge\smash{t}}$\, iff \,$s(\tuple{\vphantom\wedge\smash{t}}\,)\notin R^{\mathcal{M}}$ for all $s\in X$.
\item $\mathcal{M}\true_X \psi\wedge\theta$\, iff \,$\mathcal{M}\true_X \psi$ and $\mathcal{M}\true_X \theta$.
\item $\mathcal{M}\true_X \psi\vee\theta$\, iff \,there are $Y,Y'\subseteq X$ s.t. $Y\cup Y'=X$, 
$\mathcal{M}\true_Y \psi$ and $\mathcal{M}\true_{Y'} \theta$.
\item $\mathcal{M}\true_X\Es x\,\psi$\, iff \,there is $F:X\rightarrow\mathcal{P}^*(M)$
	 such that $\mathcal{M}\true_{X[F/x]}\psi$.
\item $\mathcal{M}\true_X\Ae x\,\psi$\, iff \,$\mathcal{M}\true_{X[M/x]}\psi$.
\end{itemize}
\end{definition}
\begin{remark}
In the truth definition above we introduced so-called \emph{lax semantics} for existential quantifier. In this definition the quantified variable can be given several witnesses. From the perspective of game-theoretic semantics this can be interpreted as the verifying player having \emph{a non-deterministic strategy} when choosing a value for the quantified variable (\cite{Galliani12a}). 
An alternative semantics, so-called \emph{strict semantics}, is to allow only a single witness for each assignment. In first the order case these two truth definitions are equivalent\footnote{Also note that, in the general case, the lax version is not stronger since we can always turn a strict quantifier into the corresponding lax quantifier by adding a ``dummy'' universal quantifier before it in the formula. That is, if $z$ is a fresh variable, then the formula $\Ee x\,\varphi$ has same truth condition with lax semantics as the formula $\Ae z\Ee x\,\varphi$ with strict semantics.} (\cite{Galliani12b}), but this does not hold when we extend FO with inclusion atoms.
\end{remark}
By their definitions, conjunction and disjunction are both associative, and for $\FOset$-formulas $\varphi_i$, $i\in\{1,\dots,n\}$, we have
\begin{align*}
	&\mathcal{M}\true_X\bigwedge_{i\leq n}\varphi_i\, 
	\text{ \,iff\, } \mathcal{M}\true_X\varphi_i \,\text{ for each } i\leq n. \\
	&\mathcal{M}\true_X\bigvee_{i\leq n}\varphi_i\, 
	\text{ \,iff\, there exist } Y_1,\dots,Y_n\subseteq X \text{ such that } \bigcup_{i\leq n} Y_i=X \\[-0,1cm] 
	&\hspace{7cm} \text{ and } \mathcal{M}\true_{Y_i}\varphi_i \,\text{ for each } i\leq n.
\end{align*}
For tuples $\tuple{\vphantom\wedge\smash{t}}:= t_1\dots t_k$ and $\tuple{t'}:= t_1'\dots t_k'$ of $L$-terms we write $\tuple{\vphantom\wedge\smash{t}} \!=\! \tuple{t'} := \bigwedge_{i\leq k} t_i\!=\!t_i'$ and $\tuple{\vphantom\wedge\smash{t}}\!\neq\!\tuple{t'} := \bigvee_{i\leq k}\neg t_i\!=\!t_i'$.
It is easy to see that the following equivalences hold.
\begin{align*}
	\mathcal{M}\true_X\tuple{\vphantom\wedge\smash{t}} = \tuple{t'}\, 
	&\;\text{ iff }\; s(\tuple{\vphantom\wedge\smash{t}}\,) = s(\tuple{t'}) \text{ for all } s\in X \\
	\mathcal{M}\true_X\tuple{\vphantom\wedge\smash{t}} \neq \tuple{t'}\, 
	&\;\text{ iff }\; s(\tuple{\vphantom\wedge\smash{t}}\,) \neq s(\tuple{t'}) \text{ for all } s\in X. 
\end{align*}

For $\varphi\in\FOset$ and $\tuple x:= x_1\dots x_k$, we write $\Es\tuple x\,\varphi := \Es x_1\dots\Es x_k\varphi$ and $\Ae\tuple x\,\varphi := \Ae x_1\dots\Ae x_k\varphi$.
By Definition \ref{def: Team semantics}, consecutive quantifications modify the team after the evaluation of each quantifier. Nevertheless, as shown by the following easy proposition, it is equivalent to quantify several elements in $M$ one after another and to quantify a single vector in $M$.

\begin{proposition}\label{the: Quantification of vectors}
For any $k$-tuple $\tuple x$ and $\varphi\in\FOset$ we have
\begin{enumerate}
\item[a)] $\mathcal{M}\true_X\Es\tuple x\,\varphi$\, iff there exists $\mathcal{F}:X\rightarrow\mathcal{P}^*(M^k)$ 
	 such that $\mathcal{M}\true_{X[\mathcal{F}/\tuple{x}\,]}\varphi$.
\item[b)] $\mathcal{M}\true_X\Ae\tuple x\,\varphi$\, iff $\mathcal{M}\true_{X[M^k/\tuple{x}\,]}\varphi$.
\end{enumerate}
\end{proposition}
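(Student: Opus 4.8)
The plan is to prove both items simultaneously by induction on the length $k$ of the tuple $\tuple x$, the $\FOset$-formula $\varphi$ being universally quantified in the induction hypothesis. The base case $k=1$ is immediate from the existential and universal clauses of Definition \ref{def: Team semantics}: identifying $M^1$ with $M$ and the $1$-tuple $(a)$ with $a$, we have $\mathcal{P}^*(M^1)=\mathcal{P}^*(M)$, $s[(a)/x_1]=s[a/x_1]$, and $X[\mathcal{F}/x_1]$ coincides with $X[F/x_1]$ for the evidently corresponding choice of $F$, so there is nothing to prove.

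For the inductive step, split $\tuple x=\tuple{x}'\,x_{k+1}$ with $\tuple{x}':=x_1\dots x_k$, so that $\Es\tuple x\,\varphi=\Es\tuple{x}'\,(\Es x_{k+1}\,\varphi)$ and $\Ae\tuple x\,\varphi=\Ae\tuple{x}'\,(\Ae x_{k+1}\,\varphi)$. Part b) is the easy half: applying the induction hypothesis to $\tuple{x}'$ and the $\FOset$-formula $\Ae x_{k+1}\,\varphi$, and then the universal clause of Definition \ref{def: Team semantics}, shows that $\mathcal{M}\true_X\Ae\tuple x\,\varphi$ is equivalent to $\mathcal{M}\true_{X[M^k/\tuple{x}'\,][M/x_{k+1}]}\varphi$; and a direct unwinding of the definitions of $X[M^k/\tuple{x}'\,]$, $X[M/x_{k+1}]$ and $X[M^{k+1}/\tuple{x}\,]$ gives $X[M^k/\tuple{x}'\,][M/x_{k+1}]=X[M^{k+1}/\tuple{x}\,]$, since both sides equal $\{\,s[\tuple c/\tuple x\,]\mid s\in X,\ \tuple c\in M^{k+1}\,\}$.

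For part a) the same reduction applies, but now the crux is a purely set-theoretic lemma: the teams of the form $X[\mathcal{F}'/\tuple{x}'\,][G/x_{k+1}]$, where $\mathcal{F}':X\to\mathcal{P}^*(M^k)$ and $G:X[\mathcal{F}'/\tuple{x}'\,]\to\mathcal{P}^*(M)$, are exactly the teams of the form $X[\mathcal{F}/\tuple{x}\,]$ with $\mathcal{F}:X\to\mathcal{P}^*(M^{k+1})$. Granting this, the induction hypothesis applied to $\tuple{x}'$ and $\Es x_{k+1}\,\varphi$, together with the existential clause of Definition \ref{def: Team semantics}, yields the equivalence. The forward half of the lemma is routine: given $\mathcal{F}'$ and $G$, put $\mathcal{F}(s):=\{\,(\tuple a,b)\mid \tuple a\in\mathcal{F}'(s),\ b\in G(s[\tuple a/\tuple{x}'\,])\,\}$, which is non-empty because $\mathcal{F}'(s)$ is and each value of $G$ is, and a one-line computation shows $X[\mathcal{F}/\tuple{x}\,]=X[\mathcal{F}'/\tuple{x}'\,][G/x_{k+1}]$.

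The step I expect to be the main obstacle is the reverse half of the lemma, specifically the definition of $G$. Given $\mathcal{F}:X\to\mathcal{P}^*(M^{k+1})$ one takes $\mathcal{F}'(s)$ to be the projection of $\mathcal{F}(s)$ onto its first $k$ coordinates (again non-empty), but one cannot simply read off $G(r)$ from a single preimage of $r$ under $s\mapsto s[\tuple a/\tuple{x}'\,]$, since that map need not be injective: distinct assignments of $X$ differing only on the variables of $\tuple{x}'$ can be sent to the same $r$. The remedy is to take a union over all relevant preimages,
\[
  G(r):=\bigcup\bigl\{\,\{\,b\mid(\tuple a,b)\in\mathcal{F}(s)\,\}\ \bigm|\ s\in X,\ \tuple a:=(r(x_1),\dots,r(x_k))\in\mathcal{F}'(s),\ s[\tuple a/\tuple{x}'\,]=r\,\bigr\},
\]
which is well defined and non-empty on $X[\mathcal{F}'/\tuple{x}'\,]$ (the assignment witnessing $r\in X[\mathcal{F}'/\tuple{x}'\,]$ contributes a non-empty set), after which the two inclusions $X[\mathcal{F}'/\tuple{x}'\,][G/x_{k+1}]\subseteq X[\mathcal{F}/\tuple{x}\,]$ and $\supseteq$ are a short verification. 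This non-injectivity persists even when the variables of $\tuple x$ are pairwise distinct, so the union is genuinely needed; allowing repeated variables in $\tuple x$ adds only further bookkeeping of the same kind.
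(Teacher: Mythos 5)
Your proof is correct; the paper offers no argument of its own (its proof reads only ``Straightforward''), and your induction on the length of $\tuple x$, reducing everything to the identification of teams of the form $X[\mathcal{F}'/\tuple{x}'\,][G/x_{k+1}]$ with those of the form $X[\mathcal{F}/\tuple{x}\,]$, is exactly the routine verification being alluded to. In particular, your union-over-preimages definition of $G$ correctly handles the non-injectivity of $s\mapsto s[\tuple a/\tuple{x}'\,]$, which is the only point where any care is needed.
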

\noindent
Note that with lax semantics for existential quantifier, when we quantify a $k$-tuple of variables, we can actually quantify a $k$-ary relation in $M$. 

First order logic with team semantics has so-called \emph{flatness}-property:

\begin{proposition}[\cite{Vaananen07}, Flatness]\label{the: Flatness}
Let $X$ be a team and $\varphi\in\FOset$. Then
\[
	\mathcal{M}\true_X\varphi\, \text{ iff }\, \mathcal{M}\true_{\{s\}} \varphi \text{ for all } s\in X.
\]
\end{proposition}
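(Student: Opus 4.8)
The plan is to prove both directions simultaneously by induction on the structure of $\varphi$. Before the induction I would record the auxiliary fact, provable by the very same induction (or separately by a trivial one), that $\mathcal{M}\true_{\emptyset}\varphi$ holds for every $\varphi\in\FOset$: literals are vacuous on the empty team, conjunction and the quantifier cases reduce to the empty team again since $\emptyset[M/x]=\emptyset$ and $\emptyset[F/x]=\emptyset$ for the empty function $F$, and disjunction uses the cover $\emptyset=\emptyset\cup\emptyset$. This little remark is what keeps the singleton-splitting steps below clean. The base case of the main induction — literals $t_1=t_2$, $\neg t_1=t_2$, $R\,\tuple{\vphantom\wedge\smash{t}}$, $\neg R\,\tuple{\vphantom\wedge\smash{t}}$ — is then immediate from Definition \ref{def: Team semantics}, since in each case ``$\mathcal{M}\true_X$'' unfolds by definition to a condition of the shape ``$\dots$ for all $s\in X$'', which is precisely the conjunction over $s\in X$ of the corresponding condition ``$\mathcal{M}\true_{\{s\}}$''.

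For the inductive step, the conjunction case is routine reorganization: $\mathcal{M}\true_X\psi\wedge\theta$ iff $\mathcal{M}\true_X\psi$ and $\mathcal{M}\true_X\theta$, which by the induction hypothesis is iff ($\mathcal{M}\true_{\{s\}}\psi$ for all $s\in X$) and ($\mathcal{M}\true_{\{s\}}\theta$ for all $s\in X$), and this is iff $\mathcal{M}\true_{\{s\}}\psi\wedge\theta$ for all $s\in X$. The universal quantifier case is handled by first noting $X[M/x]=\bigcup_{s\in X}\{s\}[M/x]$: applying the induction hypothesis to $\psi$ on the team $X[M/x]$ reduces ``$\mathcal{M}\true_{X[M/x]}\psi$'' to ``$\mathcal{M}\true_{\{s'\}}\psi$ for all $s'\in X[M/x]$''; grouping the $s'$ according to which $s\in X$ they extend and applying the induction hypothesis again to each $\{s\}[M/x]$ rewrites this as ``$\mathcal{M}\true_{\{s\}}\Ae x\,\psi$ for all $s\in X$''.

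The two cases needing genuine care are disjunction and the existential quantifier, because one must convert a family of witnesses for singletons into a single witness for $X$ and conversely. For $\psi\vee\theta$, from left to right a cover $X=Y\cup Y'$ with $\mathcal{M}\true_Y\psi$, $\mathcal{M}\true_{Y'}\theta$ puts each $s\in X$ into $Y$ or $Y'$, so by the induction hypothesis $\mathcal{M}\true_{\{s\}}\psi$ or $\mathcal{M}\true_{\{s\}}\theta$, and padding with the empty team gives a cover of $\{s\}$ witnessing $\mathcal{M}\true_{\{s\}}\psi\vee\theta$; from right to left one sets $Y:=\{\,s\in X\mid\mathcal{M}\true_{\{s\}}\psi\,\}$ and $Y':=\{\,s\in X\mid\mathcal{M}\true_{\{s\}}\theta\,\}$, notes that each singleton cover forces $Y\cup Y'=X$, and concludes $\mathcal{M}\true_Y\psi$, $\mathcal{M}\true_{Y'}\theta$ by the induction hypothesis. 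The existential case is analogous: from left to right, given a global $F\colon X\to\mathcal{P}^*(M)$, for each $s$ pick $a\in F(s)$ and use that $\{s[a/x]\}\subseteq X[F/x]$ together with the induction hypothesis on $\psi$ to get $\mathcal{M}\true_{\{s\}}\Es x\,\psi$; from right to left, glue the per-singleton witnesses $F_s$ into $F(s):=F_s(s)$, observe $X[F/x]=\bigcup_{s\in X}\{s\}[F_s/x]$, apply the induction hypothesis to $\psi$ on each piece $\{s\}[F_s/x]$ and then once more on the union $X[F/x]$. The only real subtlety anywhere in the argument is this bookkeeping of covers and witness functions across the union decomposition of $X$; I expect that to be the main — though quite modest — obstacle, while everything else is a direct unfolding of Definition \ref{def: Team semantics}.
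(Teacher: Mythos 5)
Your proof is correct: the structural induction, with the empty team property noted up front to pad the singleton covers, and the careful gluing/splitting of covers and choice functions in the disjunction and existential cases, is exactly the standard argument. The paper itself does not prove Proposition \ref{the: Flatness} but only cites it from \cite{Vaananen07}, and your argument is essentially the one given there, so there is nothing to fix.
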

\noindent 
We write $\true_s^\text{T}$ and $\true^\text{T}$ for truth with the standard Tarski semantics. The following proposition shows how team semantics is related to Tarski semantics.

\begin{proposition}[\cite{Vaananen07}]\label{the: Tarski}
Let $\varphi\in\FOset$ and let $s$ be an assignment. Then for all $\FOset$-formulas we have
$\mathcal{M}\true_s^\text{\emph{T}}\varphi\, \text{ iff }\, \mathcal{M}\true_{\{s\}}\varphi$.
In particular, for all $\FOset$-sentences, we have
$\mathcal{M}\true^\text{\emph{T}}\!\varphi$ \,iff\, $\mathcal{M}\true_{\{\emptyset\}}\varphi$.
\end{proposition}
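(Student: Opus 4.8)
The plan is to prove the first equivalence $\mathcal{M}\true^\text{T}_s\varphi$ iff $\mathcal{M}\true_{\{s\}}\varphi$ for every $\varphi\in\FOset$ and every assignment $s$ with $\fr(\varphi)\subseteq\dom(s)$, by induction on the structure of $\varphi$ (which is in negation normal form, so there is no connective beyond $\wedge,\vee,\Es,\Ae$ and literals to treat). The ``in particular'' clause then follows immediately: a sentence $\varphi$ has $\fr(\varphi)=\emptyset$, so the only relevant assignment is $s=\emptyset$, and $\mathcal{M}\true^\text{T}_\emptyset\varphi$ is by definition the same as $\mathcal{M}\true^\text{T}\varphi$, while $\{s\}=\{\emptyset\}$. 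The base cases are routine: for each literal, the team-semantic clause of Definition~\ref{def: Team semantics} quantifies over ``all $s'\in X$'' with $X=\{s\}$, so it collapses to precisely the corresponding Tarski condition on $s$. The conjunction case is immediate from the induction hypothesis, since $\mathcal{M}\true^\text{T}_s$ and $\mathcal{M}\true_{\{s\}}$ decompose over $\wedge$ in exactly the same way.

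The three remaining cases use two small auxiliary observations. First, the only subteams of $\{s\}$ are $\emptyset$ and $\{s\}$, and $\mathcal{M}\true_\emptyset\theta$ holds for every $\theta\in\FOset$ (a trivial side induction, or an instance of flatness, Proposition~\ref{the: Flatness}, applied to the empty team). This disposes of disjunction: any decomposition $Y\cup Y'=\{s\}$ forces $\{s\}\in\{Y,Y'\}$, so $\mathcal{M}\true_{\{s\}}\psi\vee\theta$ is equivalent to ``$\mathcal{M}\true_{\{s\}}\psi$ or $\mathcal{M}\true_{\{s\}}\theta$'', matching the Tarski clause for $\psi\vee\theta$ through the induction hypothesis (in the right-to-left direction one takes the other component to be $\emptyset$). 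Second, for the quantifier cases I would use flatness to evaluate $\psi$ on the modified teams. For $\Ae x\,\psi$ we have $\{s\}[M/x]=\{\,s[a/x]\mid a\in M\,\}$, so by flatness $\mathcal{M}\true_{\{s\}[M/x]}\psi$ iff $\mathcal{M}\true_{\{s[a/x]\}}\psi$ for all $a\in M$, which by the induction hypothesis is exactly the Tarski condition for $\Ae x\,\psi$ (here one uses $M\neq\emptyset$, which holds by definition of an $L$-model).

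The case $\Es x\,\psi$ is the only real point of friction, since it is where the lax team semantics — a function $F\colon\{s\}\to\mathcal{P}^*(M)$ to \emph{nonempty} sets of witnesses — must be matched against the single existential witness of Tarski semantics. Given such an $F$ with $\mathcal{M}\true_{\{s\}[F/x]}\psi$, note $\{s\}[F/x]=\{\,s[a/x]\mid a\in F(s)\,\}$ and $F(s)\neq\emptyset$; by flatness and the induction hypothesis this yields $\mathcal{M}\true^\text{T}_{s[a/x]}\psi$ for some $a\in M$, hence $\mathcal{M}\true^\text{T}_s\Es x\,\psi$. Conversely, from a Tarski-witness $a$ with $\mathcal{M}\true^\text{T}_{s[a/x]}\psi$ one takes the constant function $F\colon s\mapsto\{a\}$, for which $\{s\}[F/x]=\{s[a/x]\}$, and the induction hypothesis closes the loop. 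Thus the argument for $\Es x$ genuinely needs both the nonemptiness built into $\mathcal{P}^*(M)$ (to extract one witness) and flatness on singleton teams (to pass between the whole modified team and individual assignments); everything else is bookkeeping.
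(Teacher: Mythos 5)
Your proof is correct. Note that the paper itself offers no proof of Proposition~\ref{the: Tarski}: it is simply cited from \cite{Vaananen07}, so there is no in-paper argument to compare yours against. What you give is the standard structural induction, and the way you handle the two genuinely non-trivial points is right: the disjunction case does reduce to the fact that the only subteams of $\{s\}$ are $\emptyset$ and $\{s\}$ together with the empty team property (which, as the paper remarks, already follows directly from Definition~\ref{def: Team semantics}, so you do not even need flatness there), and the quantifier cases do need flatness (Proposition~\ref{the: Flatness}) or an equivalent strengthening of the induction hypothesis, since $\{s\}[M/x]$ and $\{s\}[F/x]$ are no longer singletons; invoking Proposition~\ref{the: Flatness} is legitimate because it is stated before Proposition~\ref{the: Tarski} and is proved (in \cite{Vaananen07}) by a direct induction that does not presuppose the Tarski correspondence, so there is no circularity. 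Your treatment of the lax existential quantifier -- extracting a single witness from the nonempty set $F(s)$ one way, and using the constant function $s\mapsto\{a\}$ the other way -- is exactly what is needed. One tiny inaccuracy: the nonemptiness of $M$ is not actually used in the $\Ae x$ case (if $M$ were empty both sides would hold vacuously); it is of course guaranteed by the definition of an $L$-model anyway, so the remark is harmless.
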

\noindent
Note that, by flatness, $\mathcal{M}\true_X\varphi$ iff $\mathcal{M}\true_s^\text{T}\varphi$ for all $s\in X$. In this sense we can say that team semantics for FO is a generalization of Tarski semantics.

By Proposition \ref{the: Tarski} it is natural to write $\mathcal{M}\true\varphi$ when we mean $\mathcal{M}\true_{\{\emptyset\}}\varphi$. Note that $\mathcal{M}\true_\emptyset\varphi$ holds trivially for all $\FOset$-formulas $\varphi$ by Definition \ref{def: Team semantics}. In general we say that any logic $\mathcal{L}$ with team semantics has \emph{empty team property} if $\mathcal{M}\true_\emptyset\varphi$ holds for all $\mathcal{L}$-formulas $\varphi$. 

We say that a logic $\mathcal{L}$ is \emph{local} if the truth of formulas is determined only by the values of the free variables in a team, i.e. the following holds for all $\mathcal{L}$-formulas $\varphi$.
\[
	\mathcal{M}\true_X\varphi\, \;\text{ iff }\; \mathcal{M}\true_{X\upharpoonright\fr(\varphi)} \varphi,
\]
where $X\upharpoonright\fr(\varphi):=\{s\!\upharpoonright\!\fr(\varphi)\mid s\in X\}$ and $s\!\upharpoonright\!\fr(\varphi)$ is an assignment such that $\dom(s\!\upharpoonright\!\fr(\varphi))=\fr(\varphi)$ and $(s\!\upharpoonright\!\fr(\varphi))(x)=s(x)$ for each $x\in\fr(\varphi)$.
FO is clearly local by Propositions \ref{the: Flatness} and \ref{the: Tarski}. 
Also note that if a logic $\mathcal{L}$ is local and has empty team property, then the following holds for all $\mathcal{L}$-sentences:
\[
	\mathcal{M}\true\varphi \;\text{ iff } \mathcal{M}\true_X\varphi \text{ for all teams } X.
\]
We define two more important properties for any logic $\mathcal{L}$ with team semantics.
\begin{definition}
Let $\mathcal{L}$ be a logic with team semantics. We say that
\begin{itemize}
\item $\mathcal{L}$ is \emph{closed downwards} if the following implication holds:
\[
	\text{If } \mathcal{M}\true_X\varphi \text{ and } Y\subseteq X, \text{ then } \mathcal{M}\true_Y\varphi. \hspace{1,32cm}
\]
\item $\mathcal{L}$ is \emph{closed under unions} if the following implication holds:
\[
	\text{If } \mathcal{M}\true_{X_i}\varphi \text{ for every } i\in I, \text{ then } \mathcal{M}\true_{\cup_{i\in I}X_i}\varphi.
\]
\end{itemize}
\end{definition}
\noindent
By flatness, FO is both closed both downwards and under unions.


\subsection{Inclusion and exclusion logics}\label{ssec: Inclusion and exclusion logics}


Inclusion and exclusion logics are obtained by adding inclusion and exclusion atoms, respectively, to FO with team semantics. By allowing the use of the both of these atoms we get inclusion-exclusion logic which is our main topic of interest in this paper. We first present the syntax and semantics for inclusion logic (INC).

\begin{definition}\label{def: INCset}
If $\tuple{\vphantom\wedge\smash{t}}_1,\tuple{\vphantom\wedge\smash{t}}_2$ are $k$-tuples of $L$-terms, $\tuple{\vphantom\wedge\smash{t}}_1\inc\tuple{\vphantom\wedge\smash{t}}_2$ is a \emph{$k$-ary inclusion atom}. We define $\fr(\tuple{\vphantom\wedge\smash{t}}_1\inc\tuple{\vphantom\wedge\smash{t}}_2)=\vr(\tuple{\vphantom\wedge\smash{t}}_1)\cup\vr(\tuple{\vphantom\wedge\smash{t}}_2)$. The language $\INCset$ is defined by adding the following condition to the definition of $\FOset$ (Definition \ref{def: FOset}).
\begin{itemize}
\item If $\tuple{\vphantom\wedge\smash{t}}_1,\tuple{\vphantom\wedge\smash{t}}_2$ are tuples of $L$-terms of the same length, then $\tuple{\vphantom\wedge\smash{t}}_1\subseteq\tuple{\vphantom\wedge\smash{t}}_2\in\mathcal{S}$.
\end{itemize}
\end{definition}
\noindent
Note that we do not allow negation to appear in front of inclusion atoms. For literals, connectives and quantifiers we use the same semantics as for FO with team semantics. Inclusion atoms have the following truth condition:
\begin{definition}\label{def: Inclusion atom}
Let $\mathcal{M}$ be a model and $X$ a team s.t. $\vr(\tuple{\vphantom\wedge\smash{t}}_1\tuple{\vphantom\wedge\smash{t}}_2)\subseteq\dom(X)$. We define the truth of $\tuple{\vphantom\wedge\smash{t}}_1\subseteq\tuple{\vphantom\wedge\smash{t}}_2$ in the model $\mathcal{M}$ and the team $X$:
\vspace{-0,1cm}\[
	\mathcal{M}\true_X \tuple{\vphantom\wedge\smash{t}}_1\subseteq\tuple{\vphantom\wedge\smash{t}}_2\;\, 
	\text{ iff \,for all } s\in X \text{ there exists } s'\in X 
	\text{ s.t. } s(\tuple{\vphantom\wedge\smash{t}}_1)=s'(\tuple{\vphantom\wedge\smash{t}}_2).
\vspace{-0,1cm}\]
This truth condition can be written equivalently as follows:
\vspace{-0,1cm}\[
	\mathcal{M}\true_X \tuple{\vphantom\wedge\smash{t}}_1\subseteq\tuple{\vphantom\wedge\smash{t}}_2\;\; 
	\text{ iff }\; X(\tuple{\vphantom\wedge\smash{t}}_1)\subseteq X(\tuple{\vphantom\wedge\smash{t}}_2).
\vspace{-0,1cm}\]
\end{definition}

\begin{example}\label{ex: Full relation}
Let $\tuple{\vphantom\wedge\smash{t}}_1,\dots,\tuple{\vphantom\wedge\smash{t}}_m$ be $k$-tuples of $L$-terms and $\tuple x$ a $k$-tuple of fresh variables. Now the following holds for all \emph{nonempty} teams $X$:
\[
	\mathcal{M}\true_X\Ae\tuple x\,\bigl(\bigvee_{i\leq m}\tuple x\inc\tuple{\vphantom\wedge\smash{t}}_i\bigr) 
	\;\text{ iff }\, \bigcup_{i\leq m} X(\tuple{\vphantom\wedge\smash{t}}_i) = M^k.
\]
In particular, for $t\in\Tset$ and $X\neq\emptyset$ we have $\mathcal{M}\true_X \Ae x\,(x\inc t)$ iff $X(t) = M$.
Note that this property is not closed downwards and thus it cannot be expressed in dependence logic (which is closed downwards as shown in \cite{Vaananen07}).
\end{example}
\noindent 
Next we present the syntax and semantics for exclusion logic (EXC).

\begin{definition}\label{def: EXCset}
If $\tuple{\vphantom\wedge\smash{t}}_1,\tuple{\vphantom\wedge\smash{t}}_2$ are $k$-tuples of $L$-terms, $\tuple{\vphantom\wedge\smash{t}}_1\,|\,\tuple{\vphantom\wedge\smash{t}}_2$ is a \emph{$k$-ary exclusion atom}. We define $\fr(\tuple{\vphantom\wedge\smash{t}}_1\exc\tuple{\vphantom\wedge\smash{t}}_2)=\vr(\tuple{\vphantom\wedge\smash{t}}_1)\cup\vr(\tuple{\vphantom\wedge\smash{t}}_2)$. The language $\EXCset$ is defined by adding the following condition to Definition~\ref{def: FOset}.
\begin{itemize}
\item If $\tuple{\vphantom\wedge\smash{t}}_1,\tuple{\vphantom\wedge\smash{t}}_2$ are tuples of $L$-terms of the same length, then $\tuple{\vphantom\wedge\smash{t}}_1\mid\tuple{\vphantom\wedge\smash{t}}_2\in\mathcal{S}$.
\end{itemize}
\end{definition}

\begin{definition}\label{def: Exclusion atom}
Let $\mathcal{M}$ be a model and $X$ a team s.t. $\vr(\tuple{\vphantom\wedge\smash{t}}_1\tuple{\vphantom\wedge\smash{t}}_2)\subseteq\dom(X)$. We define the truth of $\tuple{\vphantom\wedge\smash{t}}_1\mid\tuple{\vphantom\wedge\smash{t}}_2$ in the model $\mathcal{M}$ and the team $X$:
\begin{align*}
	\mathcal{M}\true_X \tuple{\vphantom\wedge\smash{t}}_1\mid\tuple{\vphantom\wedge\smash{t}}_2\;\, \text{ iff \,for all } s,s'\in X: \, s(\tuple{\vphantom\wedge\smash{t}}_1)\neq s'(\tuple{\vphantom\wedge\smash{t}}_2).
\end{align*}
This truth condition can be written equivalently as follows:
\[
	\mathcal{M}\true_X \tuple{\vphantom\wedge\smash{t}}_1\mid\tuple{\vphantom\wedge\smash{t}}_2\;\; \text{ iff }\; X(\tuple{\vphantom\wedge\smash{t}}_1)\cap X(\tuple{\vphantom\wedge\smash{t}}_2) = \emptyset.
\]
\end{definition}

Inclusion-exclusion logic (INEX) is defined simply by combining inclusion and exclusion logics:
\begin{definition}\label{def: INEXset}
Language $\INEXset$ is defined by adding both inclusion and exclusion atoms to first order logic.
\end{definition}

INC and EXC have both been shown local\footnote{Exclusion logic has been shown equivalent with dependence logic (\cite{Galliani12b}) which is known to be local (\cite{Vaananen07}). Inclusion logic has been shown local by Galliani \cite{Galliani12b}, but for this proof the lax semantics is required. With strict semantics the locality of $\INC$ is lost, which is one of the reasons why the lax semantics is considered to be a more natural choice to be used in team semantics. Inclusion logic with strict semantics has also been studied (see for example \cite{Hannula15}).}. By the truth definitions of inclusion and exclusion atoms, it is easy to see that INC and EXC both satisfy empty team property. Hence also INEX satisfies these properties. 
Neither inclusion nor exclusion logic has flatness-property. Galliani \cite{Galliani12b} has shown that INC is closed under unions, but not downwards. On the other hand, EXC is closed downwards but not under unions (\cite{Vaananen07}). Hence INEX is not closed downwards nor under unions.

In this paper we are particularly interested in the effect of arity of atoms with respect to the expressive power. For this purpose we define \emph{$k$-ary fragments} of these logics.

\begin{definition}\label{def: $k$-ary logics}
If $\varphi\in\INEXset$ contains at most $k$-ary inclusion and exclusion atoms, we say that $\varphi$ is an $\INEXset[k]$-formula. By allowing only the use of these formulas, we obtain \emph{$k$-ary inclusion-exclusion logic}, denoted by INEX[$k$]. Furthermore, \emph{$k$-ary inclusion logic} (INC[$k$]) and \emph{$k$-ary exclusion logic} (EXC[$k$]) are defined analogously.
\end{definition}

Note that the exclusion atom $\tuple{\vphantom\wedge\smash{t}}_1\mid \tuple{\vphantom\wedge\smash{t}}_2$ is not the \emph{contradictory negation} of the inclusion atom $\tuple{\vphantom\wedge\smash{t}}_1\!\subseteq\!\tuple{\vphantom\wedge\smash{t}}_2 $, and that the former is symmetric while the latter is not (that is, $\tuple{\vphantom\wedge\smash{t}}_1\mid \tuple{\vphantom\wedge\smash{t}}_2\equiv\tuple{\vphantom\wedge\smash{t}}_2\mid \tuple{\vphantom\wedge\smash{t}}_1$ but $\tuple{\vphantom\wedge\smash{t}}_1\inc \tuple{\vphantom\wedge\smash{t}}_2\not\equiv\tuple{\vphantom\wedge\smash{t}}_2\inc \tuple{\vphantom\wedge\smash{t}}_1$). 
%
%
The contradictory negations of $k$-ary inclusion and exclusion atoms can be defined in $\INEX[k]$ for \emph{nonempty} teams, as shown by the following example.

\begin{example}
Let $\mathcal{M}$ be a model, $X$ a nonempty team, $\tuple{\vphantom\wedge\smash{t}}_1,\tuple{\vphantom\wedge\smash{t}}_2\in\Tset$ $k$-tuples and $\tuple x$ a $k$-tuple of variables. It is easy to see that we have
\begin{align*}
	\mathcal{M}\ntrue_X\tuple{\vphantom\wedge\smash{t}}_1\mid\tuple{\vphantom\wedge\smash{t}}_2\, &\,\text{ iff }\, 
	\mathcal{M}\true_X\Ee \tuple x\,(\tuple x\subseteq\tuple{\vphantom\wedge\smash{t}}_1\wedge\tuple x\subseteq\tuple{\vphantom\wedge\smash{t}}_2) \\
	\mathcal{M}\ntrue_X\tuple{\vphantom\wedge\smash{t}}_1\subseteq\tuple{\vphantom\wedge\smash{t}}_2\, &\,\text{ iff }\, 
	\mathcal{M}\true_X\Ee \tuple x\,(\tuple x\subseteq\tuple{\vphantom\wedge\smash{t}}_1\wedge\tuple x\mid\tuple{\vphantom\wedge\smash{t}}_2).
\end{align*}
\end{example}

If we would use \emph{negated} inclusion/exclusion atoms with the semantics of the contradictory negation in $\INEX$, we would lose empty team property since the contradictory negations of these atoms are false in the empty team. But for nonempty teams, this extension would not give us any more expressive power.

\begin{observation}\label{obs: contradictory negation}
In team semantics contradictory negation is not equivalent with the negation $\neg$ that is used with literals. This is because, if $\varphi$ is of the form $\tuple{\vphantom\wedge\smash{t}}_1=\tuple{\vphantom\wedge\smash{t}}_2$ or $R\,\tuple{\vphantom\wedge\smash{t}}$, the claims $\mathcal{M}\ntrue_X\varphi$ and $\mathcal{M}\true_X\neg\varphi$ are not necessarily equivalent when $\abs X>1$.
Since inclusion and exclusion atoms are atomic formulas as (non-negated) literals, their \emph{negations} should behave similarly as the negations of literals.
Therefore, if we would define negated inclusion or exclusion atoms, the semantics of contradictory negation would not be a natural choice for it. We will discuss further the issue of sensible semantics for negated atoms in the end of of Section~\ref{sec: Translations}.
\end{observation}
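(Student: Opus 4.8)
The statement is a non-equivalence, so the plan is to produce one small explicit counterexample and to point out why the hypothesis $\abs X>1$ is what makes it possible. I would take the simplest possible witness: a model $\mathcal M$ whose universe has (at least) two elements $a\neq b$, two distinct variables $x,y$, the atom $\varphi:=(x=y)$, and the team $X=\{s_1,s_2\}$ with $s_1(x)=s_1(y)=a$ and $s_2(x)=a$, $s_2(y)=b$. Here $\abs X=2>1$, and the verification is immediate from Definition \ref{def: Team semantics}.

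First, since $s_2(x)\neq s_2(y)$, the clause for $t_1=t_2$ fails on the whole team, so $\mathcal M\ntrue_X\varphi$. Secondly, since $s_1(x)=s_1(y)$, the clause for $\neg t_1=t_2$ -- which demands $s(x)\neq s(y)$ for \emph{all} $s\in X$ -- also fails, so $\mathcal M\ntrue_X\neg\varphi$. Hence $\mathcal M\ntrue_X\varphi$ is true while $\mathcal M\true_X\neg\varphi$ is false, and the two claims are not equivalent. The relational case is handled by exactly the same team: pick a unary $R$ with $s_1(x)\in R^{\mathcal M}$ and $s_2(x)\notin R^{\mathcal M}$, and run the argument with $\varphi:=R\,x$ in place of $x=y$; this also shows the phenomenon for atoms $R\,\tuple t$.

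To round things off I would remark why $\abs X>1$ is genuinely needed: for a singleton team $X=\{s\}$ the literal clauses of Definition \ref{def: Team semantics} reduce to the ordinary Tarski conditions (Proposition \ref{the: Tarski}), so $\mathcal M\ntrue_{\{s\}}\varphi$ and $\mathcal M\true_{\{s\}}\neg\varphi$ are then equivalent; the failure above is the essentially team-semantic one, where some assignment of the team satisfies the atom and another falsifies it. The second half of the observation -- that, consequently, contradictory negation would be an unnatural semantics for a hypothetical negated inclusion or exclusion atom -- is a design remark rather than a theorem: since such atoms are atomic in the same sense as the literals $x=y$ and $R\,x$, the very discrepancy exhibited above would be inherited, so I would just appeal to this analogy and defer the substantive discussion to the end of Section \ref{sec: Translations}, as the text announces. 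There is no real obstacle in any of this; the only point needing a little care is to make the failure two-sided, which is exactly what forces $\abs X>1$.
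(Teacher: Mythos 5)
The paper states this Observation without any explicit proof, so your job was essentially to supply the evident counterexample, and your approach is exactly the one the paper has in mind: a team in which one assignment satisfies the atom and another falsifies it, so that neither the atom nor its $\neg$-version holds, while the contradictory negation of the atom does; your equality case and your closing remark about why $\abs X>1$ (via Proposition \ref{the: Tarski}) is needed are both correct.

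One concrete slip in the relational case: you keep the same team, with $s_1(x)=s_2(x)=a$, and then ask for a unary $R$ with $s_1(x)\in R^{\mathcal M}$ and $s_2(x)\notin R^{\mathcal M}$ --- impossible, since both values equal $a$. The fix is immediate: use the variable on which the two assignments differ, i.e.\ take $\varphi:=R\,y$ with $R^{\mathcal M}=\{a\}$, so $s_1(y)=a\in R^{\mathcal M}$ and $s_2(y)=b\notin R^{\mathcal M}$, and the same two-sided failure goes through; alternatively, change the team so that $s_1(x)\neq s_2(x)$. With that repair the argument is complete, and your treatment of the second half of the Observation as a design remark rather than a provable claim is appropriate.
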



\section{Defining new operators for INEX}\label{sec: New operators}

In this section we will define several useful operators for INEX[$k$]. First we will define \emph{constancy atoms} and \emph{intuitionistic disjunction}. Then we will introduce \emph{inclusion and exclusion quantifiers} which present a new approach to inclusion and exclusion dependencies. 
Then we define a new operator called \emph{term value preserving disjunction} which will be essential for our translation from $\ESO[k]$ to $\INEX[k]$ in the next section. Finally we will introduce a method called \emph{relativization} that is an application which uses several of the new operators defined in this section.


\subsection{Constancy atoms and intuitionistic disjunction}


\emph{Constancy atom} $\dep(t)$ is a unary dependence atom (\cite{Vaananen07}). It simply says that the term $t$ has a constant value in a (nonempty) team. Galliani \cite{Galliani12b} has shown that this atom can be expressed by using unary exclusion atom. Thus we can define this atom as an abbreviation in $\INEXset[k]$ for any $k\geq 1$.

\begin{definition}[\cite{Galliani12b}]\label{def: Constancy atom}
Let $t\in\Tset$ and $x$ a fresh variable. We define \emph{constancy atom} $\dep(t)$, as an abbreviation, as follows: \;
$\dep(t) \;:=\; \Ae x\left(x = t\,\vee\, x \mid t\,\right)$.
\end{definition}

\begin{proposition}[\cite{Galliani12b}]
With the assumptions of the previous definition, we obtain the following truth condition: \, $\mathcal{M}\true_X\dep(t) \; \text{ iff } \; \abs{X(t)}=1 \text{ or } X=\emptyset$.
\end{proposition}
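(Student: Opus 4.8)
The statement to prove is the truth condition for the constancy atom:
\[
	\mathcal{M}\true_X\dep(t) \;\text{ iff }\; \abs{X(t)}=1 \text{ or } X=\emptyset,
\]
where $\dep(t)$ abbreviates $\Ae x\,(x=t \vee x\mid t)$ for a fresh variable $x$. The plan is to unfold the team semantics of the universal quantifier and the disjunction, and then analyze the two constituent literals. First I would apply the clause for $\Ae x$: we have $\mathcal{M}\true_X\dep(t)$ iff $\mathcal{M}\true_{X[M/x]}\,(x=t\vee x\mid t)$. Then I would apply the clause for $\vee$: this holds iff there are $Y,Y'\subseteq X[M/x]$ with $Y\cup Y'=X[M/x]$, $\mathcal{M}\true_Y x=t$, and $\mathcal{M}\true_{Y'} x\mid t$.

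Next I would handle the two trivial/degenerate cases and then the main case. If $X=\emptyset$, then $X[M/x]=\emptyset$ and the empty team property (which FO has, and hence $\INEXset[k]$ inherits for this formula) gives $\mathcal{M}\true_\emptyset\dep(t)$, matching the right-hand side. So assume $X\neq\emptyset$; then $X[M/x]\neq\emptyset$ as well, and we must show $\mathcal{M}\true_X\dep(t)$ iff $\abs{X(t)}=1$. For the direction from right to left, suppose $X(t)=\{a\}$ for some $a\in M$. Take $Y:=\{s\in X[M/x]\mid s(x)=a\}$ and $Y':=\{s\in X[M/x]\mid s(x)\neq a\}=X[M\setminus\{a\}/x]$. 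Clearly $Y\cup Y'=X[M/x]$. Since every $s\in X[M/x]$ restricts to an assignment whose $t$-value is $a$, for $s\in Y$ we get $s(x)=a=s(t)$, so $\mathcal{M}\true_Y x=t$. For $s,s'\in Y'$ we have $s(x)\neq a = s'(t)$, so $X'(x)\cap X'(t)=\emptyset$ and $\mathcal{M}\true_{Y'} x\mid t$. Hence $\mathcal{M}\true_X\dep(t)$.

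For the converse direction, suppose $\mathcal{M}\true_X\dep(t)$ with $X\neq\emptyset$, and fix a witnessing decomposition $X[M/x]=Y\cup Y'$ as above. I would argue that $X(t)$ cannot contain two distinct elements. Suppose toward a contradiction $a,b\in X(t)$ with $a\neq b$; pick $s_a,s_b\in X$ with $s_a(t)=a$, $s_b(t)=b$. Consider the assignments $s_a[b/x]$ and $s_b[a/x]$, both of which lie in $X[M/x]$ (here I use that $x$ is fresh, so changing $x$ does not affect the value of $t$, and that $X[M/x]$ contains the $x$-variant for every element of $M$). Each of these two assignments must land in $Y$ or in $Y'$. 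Since $\mathcal{M}\true_Y x=t$, any assignment $r\in Y$ satisfies $r(x)=r(t)$; but $(s_a[b/x])(x)=b\neq a=(s_a[b/x])(t)$ and $(s_b[a/x])(x)=a\neq b=(s_b[a/x])(t)$, so neither assignment is in $Y$. Hence both $s_a[b/x]$ and $s_b[a/x]$ lie in $Y'$. But then, taking the pair $s=s_a[b/x]$ and $s'=s_b[a/x]$ in $Y'$, we get $s(x)=b=s'(t)$, contradicting $\mathcal{M}\true_{Y'} x\mid t$, which requires $r(x)\neq r'(t)$ for all $r,r'\in Y'$. Therefore $\abs{X(t)}\leq 1$, and since $X\neq\emptyset$ gives $\abs{X(t)}\geq 1$, we conclude $\abs{X(t)}=1$. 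Since neither direction is hard, there is no real obstacle here; the only point requiring a little care is the freshness of $x$ (so that $s[a/x](t)=s(t)$) and correctly tracking which of the two subteams $Y,Y'$ each modified assignment can belong to — this is exactly where the ``exclusion'' half does the work of ruling out two distinct $t$-values.
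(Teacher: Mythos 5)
Your proof is correct. The paper itself gives no proof of this proposition — it is quoted from Galliani's work with a citation — and your argument is exactly the expected direct verification: unfold the semantics of $\Ae x$ and $\vee$, split $X[M/x]$ by whether $s(x)$ equals the unique $t$-value for the right-to-left direction, and for the converse use the freshness of $x$ to place the two "crossed" assignments $s_a[b/x]$, $s_b[a/x]$ outside $Y$ and then contradict the exclusion atom in $Y'$. The only blemish is a notational slip where you write $X'(x)\cap X'(t)=\emptyset$ instead of $Y'(x)\cap Y'(t)=\emptyset$; the surrounding reasoning makes the intent clear.
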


\emph{Intuitionistic disjunction} $\sqcup$ is obtained by lifting the Tarski semantics of disjunction from single assignments to teams. That is, $\varphi\sqcup\psi$ is true in a team~$X$ if either $\varphi$ or $\psi$ is true in $X$. Galliani \cite{Galliani12a} has shown that this operator can be expressed with constancy atoms in any logic with empty team property. We will define this operator in INEX here in the same way -- with the addition of the special case of single element models.

\begin{definition}[\cite{Galliani12a}]\label{def: Intuitionistic disjunction}
Let $\varphi,\psi\in\INEXset$. We define \emph{intuitionistic disjunction} $\varphi\sqcup\psi$, as an abbreviation, in the following way:
\begin{align*}
	\varphi\sqcup\psi := \;\bigl(\gamma_{=1}\wedge(\varphi\vee\psi)\bigr)
	\vee\Es z_1\Es z_2&\,\big(\dep(z_1)\wedge\dep(z_2) \\[-0,15cm]
	&\quad\wedge((z_1=z_2\wedge\varphi)\vee(z_1\neq z_2\wedge\psi))\bigr),
\end{align*}
where $z_1,z_2$ are fresh variables and $\gamma_{=1}$ is a shorthand for $\Ae z_1\Ae z_2\,(z_1\!=\!z_2)$.
\end{definition}

\begin{proposition}[\cite{Galliani12a}]\label{the: Intuitionistic disjunction}
With the assumptions of the previous definition, we obtain the following truth condition: \,
$\mathcal{M}\true_X \varphi\sqcup\psi \; \text{ iff } \; \mathcal{M}\true_X\varphi \text{ or } \mathcal{M}\true_X\psi$.
\end{proposition}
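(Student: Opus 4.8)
The plan is to verify the biconditional
\[
	\mathcal{M}\true_X \varphi\sqcup\psi \;\text{ iff }\; \mathcal{M}\true_X\varphi \text{ or } \mathcal{M}\true_X\psi
\]
by unwinding the abbreviation in Definition \ref{def: Intuitionistic disjunction} according to the team semantics of $\vee$, $\wedge$, $\Es$, constancy atoms, and the shorthands $\gamma_{=1}$, $z_1=z_2$, $z_1\neq z_2$. The formula $\varphi\sqcup\psi$ is a top-level disjunction of two disjuncts; call them $\alpha := \gamma_{=1}\wedge(\varphi\vee\psi)$ and $\beta := \Es z_1\Es z_2(\dep(z_1)\wedge\dep(z_2)\wedge((z_1=z_2\wedge\varphi)\vee(z_1\neq z_2\wedge\psi)))$. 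So $\mathcal{M}\true_X\varphi\sqcup\psi$ holds iff $X$ splits as $X=Y\cup Y'$ with $\mathcal{M}\true_Y\alpha$ and $\mathcal{M}\true_{Y'}\beta$. I would first treat the trivial case $X=\emptyset$ (both sides hold by empty team property, which INEX enjoys), then assume $X\neq\emptyset$.

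For the direction from right to left, suppose $\mathcal{M}\true_X\varphi$ (the $\psi$ case is symmetric, using $z_1\neq z_2$ in place of $z_1=z_2$). I would witness the top-level disjunction by taking $Y=\emptyset$ and $Y'=X$, so it suffices to show $\mathcal{M}\true_X\beta$. Pick any element $a\in M$ (possible since $M\neq\emptyset$) and let $\mathcal F:X\to\mathcal P^*(M)$ be the constant function $s\mapsto\{a\}$; quantifying $z_1$ and $z_2$ this way gives the team $X':=X[a/z_1][a/z_2]$, on which $\dep(z_1)$ and $\dep(z_2)$ hold (each term value set is $\{a\}$), and also $z_1=z_2$ holds, and $\mathcal{M}\true_{X'}\varphi$ by locality of INEX since $z_1,z_2$ are fresh and do not occur in $\varphi$. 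Then the inner disjunction $(z_1=z_2\wedge\varphi)\vee(z_1\neq z_2\wedge\psi)$ is satisfied on $X'$ by the split $X'=X'\cup\emptyset$, taking the whole team into the first disjunct. Hence $\mathcal{M}\true_X\beta$, so $\mathcal{M}\true_X\varphi\sqcup\psi$.

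For the direction from left to right, suppose $\mathcal{M}\true_X\varphi\sqcup\psi$, so $X=Y\cup Y'$ with $\mathcal{M}\true_Y\alpha$ and $\mathcal{M}\true_{Y'}\beta$. From $\mathcal{M}\true_Y\alpha$ we get $\mathcal{M}\true_Y\gamma_{=1}$ and $\mathcal{M}\true_Y\varphi\vee\psi$; note $\gamma_{=1}=\Ae z_1\Ae z_2(z_1=z_2)$ holds in $Y$ iff $Y[M^2/z_1z_2]$ satisfies $z_1=z_2$, which (when $Y\neq\emptyset$) forces $|M|=1$ — so either $|M|=1$ or $Y=\emptyset$. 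From $\mathcal{M}\true_{Y'}\beta$ we get some $\mathcal F_1,\mathcal F_2$ such that, setting $Y'':=Y'[\mathcal F_1/z_1][\mathcal F_2/z_2]$, we have $\mathcal{M}\true_{Y''}\dep(z_1)\wedge\dep(z_2)\wedge((z_1=z_2\wedge\varphi)\vee(z_1\neq z_2\wedge\psi))$. The constancy atoms tell us $|Y''(z_1)|\leq 1$ and $|Y''(z_2)|\leq 1$; splitting $Y''=Z\cup Z'$ with $\mathcal{M}\true_Z z_1=z_2\wedge\varphi$ and $\mathcal{M}\true_{Z'} z_1\neq z_2\wedge\psi$, constancy forces all of $Y''$ (hence $Z$ and $Z'$) to agree on the pair $(z_1,z_2)$, so either $Z'=\emptyset$ (and then $\mathcal{M}\true_{Y''}\varphi$, so $\mathcal{M}\true_{Y'}\varphi$ by locality) or $Z=\emptyset$ (and then $\mathcal{M}\true_{Y'}\psi$ by locality). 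The remaining chore is to combine this with the information about $Y$: I would argue that if $|M|=1$ then both $\varphi\vee\psi$ and the connectives collapse so that $\mathcal{M}\true_Y\varphi\vee\psi$ already implies $\mathcal{M}\true_Y\varphi$ or $\mathcal{M}\true_Y\psi$ (on a one-element universe every formula is closed under unions and downwards simultaneously in the relevant sense, or more carefully: on $|M|=1$, $\varphi\vee\psi$ forces one of $\varphi,\psi$ on the whole team because the two subteams $Y_1,Y_2$ are themselves a split of $Y$ and one can push everything into one side using that constancy-type atoms are automatic), and if $Y=\emptyset$ then $\mathcal{M}\true_X\varphi$ iff $\mathcal{M}\true_{Y'}\varphi$; either way $X=Y\cup Y'$ together with the conclusions for $Y$ and $Y'$ yields $\mathcal{M}\true_X\varphi$ or $\mathcal{M}\true_X\psi$, where for the ``closed under unions'' step one uses that whichever of $\varphi,\psi$ is forced on $Y$ is also the one forced on $Y'$ — which is the point where I would need to be a little careful and possibly handle the $|M|=1$ case by a direct separate argument. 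The main obstacle is precisely this last bookkeeping: reconciling the two halves $Y$ and $Y'$ into a single choice of $\varphi$ versus $\psi$ for the whole $X$, since a priori $Y$ might ``vote'' $\varphi$ while $Y'$ ``votes'' $\psi$; the resolution is that on a one-element model the disjunct obtained on $Y$ can be freely switched (the other disjunct is satisfied by the empty subteam), so one reroutes $Y$ to match $Y'$'s vote, and everything closes.
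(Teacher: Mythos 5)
Your right-to-left direction has a genuine gap: the $\psi$ case is \emph{not} symmetric to the $\varphi$ case. To push the whole team into the second disjunct on the $\psi$ side you must make $z_1\neq z_2$ true on a nonempty team, which requires two distinct constants $a\neq b$ in $M$; when $\abs{M}=1$ this is impossible, and then the second disjunct $\Es z_1\Es z_2(\dep(z_1)\wedge\dep(z_2)\wedge((z_1=z_2\wedge\varphi)\vee(z_1\neq z_2\wedge\psi)))$ is true on a nonempty $X$ only if $\varphi$ itself is true on $X$ (the $z_1\neq z_2$ block must receive the empty subteam). So if $\abs{M}=1$, $\mathcal{M}\true_X\psi$ and $\mathcal{M}\ntrue_X\varphi$, your witness fails. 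This is exactly why the definition carries the extra disjunct $\gamma_{=1}\wedge(\varphi\vee\psi)$ (the paper points out it is added precisely for one-element models): in that case you must instead take $Y=X$, $Y'=\emptyset$, note that $\gamma_{=1}$ holds since $\abs{M}=1$, and satisfy $\varphi\vee\psi$ by the split $\emptyset\cup X$. Your proof never invokes that first disjunct in this direction, so as written it does not establish the claim; the fix is a case split on $\abs{M}=1$ versus $\abs{M}\geq 2$ in the $\psi$ case.

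The left-to-right direction reaches the right conclusion, but the final ``reconciliation'' is muddled: there is nothing to reroute or switch. From $\gamma_{=1}$ you get $Y=\emptyset$ or $\abs{M}=1$. If $Y=\emptyset$ then $Y'=X$ and the analysis of the second disjunct (constancy forces all of the extended team to agree on whether $z_1=z_2$, so one block is empty) gives $\varphi$ or $\psi$ on $X$ directly. If $\abs{M}=1$ and $Y\neq\emptyset$, then over a one-element universe the only nonempty team with the given domain is the single-assignment team, so $Y=X$; evaluating $\varphi\vee\psi$ on this singleton team, one side of the split must be all of $X$, hence $\mathcal{M}\true_X\varphi$ or $\mathcal{M}\true_X\psi$ without any reference to $Y'$. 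No conflict between the ``votes'' of $Y$ and $Y'$ can arise, because when both are nonempty they both equal $X$. With these two repairs your argument matches the intended one (the paper itself only sketches the idea and cites Galliani): the construction forces the top-level split to put everything on one side, with the $\gamma_{=1}$ disjunct covering the one-element model where $z_1\neq z_2$ is unusable.
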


The idea for defining intuitionistic disjunction in this way is to require that the splitting of a team $X$ must be done in a way that either of the sides becomes empty, whence the other side must be the whole initial team $X$. When a logic has empty team property, then the requirement, that the splitting must be done in this way, becomes equivalent with the truth definition above. 
But note that if our logic would not have empty team property, then we could not define intuitionistic disjunction by using this approach.


\subsection{Inclusion and exclusion quantifiers}\label{ssec: Quantifiers}


Here we will consider inclusion and exclusion relations from a new perspective. Instead of having atomic formulas that express them, we embed these relations to the truth conditions of quantifiers. By this approach, we are aiming to obtain a logic that has similar relationship with INEX, as there is between IF-logic and dependence logic. 
We will define inclusion and exclusion versions for both existential and universal quantifiers. We will also show that we can express them by using inclusion and exclusion atoms, and thus use them freely as abbreviations in INEX. Before giving the actual definitions, we first consider what kind of semantics would be intuitive for such operators. 

In independence friendly logic we can use so-called IF-quantifiers which state that the values given for a quantified variable are independent of the values of certain other variables. It would be essentially equivalent to define ``dependence friendly'' quantifiers (\cite{Vaananen07}) which state that the values for a quantified variable is allowed to depend only on a certain set of variables. Dependence atoms of dependence logic (\cite{Vaananen07}) state a the same property about the values of variables in a team on an atomic level.
We take a reverse approach here: Instead of stating that inclusion or exclusion relation holds for certain variables in a team, we say that inclusion or exclusion holds for a certain variable when it is quantified. Syntactically this would give us quantifiers of the form $(\Ee x\inc y)$, $(\Ae x\inc y)$, $(\Ee x\exc y)$ and $(\Ae x\exc y)$.
\begin{remark}
Since inclusion is not a symmetric relation, one could also consider semantics for quantifiers of the form $(\Ee x\!\supseteq\! y)$ and $(\Ae x\!\supseteq\! y)$. 
For the first one of these we see at least two non-equivalent natural semantical approaches, but the meaning of the latter one seems to become trivial.
This question is not examined further in this paper, but a reader is encouraged to consider intuitive semantics for such quantifiers after reading this section. 
\end{remark}
Before considering natural semantics for these quantifiers, we introduce so-called \emph{storing operator} that is needed in the definitions later. The idea for it is simply that we copy the values of a given tuple $\tuple{\vphantom\wedge\smash{t}}$ of terms into a given tuple $\tuple u$ of variables. This way it is possible to refer to the old values of $\tuple{\vphantom\wedge\smash{t}}$, even if they change later in (re)quantifications.
\begin{definition}\label{def: Recording formula}
Let $\varphi\in\INEXset$, $\tuple{\vphantom\wedge\smash{t}}\in\Tset$ a $k$-tuple and $\tuple u$ a $k$-tuple of variables. The \emph{$\tuple{\vphantom\wedge\smash{t}}$ to $\tuple u$ storing operator}, $\rec{\tuple{\vphantom\wedge\smash{t}}}{\tuple u}$, is defined as:
\[
	\rec{\tuple{\vphantom\wedge\smash{t}}}{\tuple u}\varphi := 
	\Ee\tuple u\,(\tuple u = \tuple{\vphantom\wedge\smash{t}}\,\wedge\,\varphi).
\]
\end{definition}
For this operator to work as desired, we need to set a requirement that the variables in the tuple $\tuple u$ do not occur in the tuple $\tuple{\vphantom\wedge\smash{t}}$. However, naturally we must allow the variables in the tuple $\tuple u$ to be free variables in the formula $\varphi$. 

The following lemma for storing operator is obvious.

\begin{lemma}\label{the: Storing operator}
Let $\varphi\in\INEXset$, $\tuple{\vphantom\wedge\smash{t}}\in\Tset$ a $k$-tuple and let $\tuple u$ be a $k$-tuple of variables that are not in $\vr(\tuple{\vphantom\wedge\smash{t}}\,)$. Let $X' := \{s[s(\tuple{\vphantom\wedge\smash{t}}\,)/\tuple u\,]\mid s\in X\}$. Now we have $X(\tuple{\vphantom\wedge\smash{t}}\,)=X'(\tuple u\,)$ and the following holds: \, $\mathcal{M}\true_X\rec{\tuple{\vphantom\wedge\smash{t}}}{\tuple u}\varphi \,\text{ iff }\, \mathcal{M}\true_{X'}\varphi$.
\end{lemma}

We are now ready to start defining inclusion and exclusion quantifiers.


\subsubsection*{Existential inclusion and exclusion quantifiers}


We begin by defining semantics for existential inclusion and exclusion quantifiers $(\Ee x\inc y)$ and $(\Ee x\exc y)$. We take here a slightly more general approach by allowing the variable $y$ to be any $L$-term $t$. 
A natural reading for \emph{existential inclusion quantifier} $(\Ee x\!\subseteq\!t)$ is that ``there exists an $x$ within the values of $t$''. This kind of truth condition can be achieved simply by modifying the standard truth condition of existential quantifier in such a way that the values given by the choice function $F$ are restricted to the values of $t$ in a team $X$. We then obtain the following truth condition:
\begin{align*}
	\mathcal{M}\true_X (\Ee x\subseteq t)\,\varphi\;
	\text{ iff there is } &F:X\rightarrow\mathcal{P}^*(X(t))
	\text{ s.t. } \mathcal{M}\true_{X[F/x]}\varphi.
\end{align*}
Another natural language interpretation for quantifier $(\Ee x\!\subseteq\!t)$ is that the values given for $x$ must be \emph{possible} for the term $t$. From the perspective of semantic games we may say that \emph{verifying} player's allowed moves are restricted on the set $X(t)$ instead of the whole universe of a model.\footnote{Note that the setting here is quite different than in IF-logic (or dependence logic). In IF-logic the verifying player is allowed to choose any values, but values for certain variables are ``hidden'' from him/her when making the choice. Here the player may see the values of all variables, but only certain values are admissible to be chosen. In the former case the \emph{domain} of the strategy function is restricted and in the  latter case only its \emph{range} is restricted.}

Similarly we read \emph{existential exclusion quantifier} $(\Ee x\exc t)$ as ``there exists an $x$ outside the values of $t$''. To achieve this, we simply restrict values given by the choice function~$F$ to the complement, $\overline{X(t)}=M\!\setminus\!X(t)$, of $X(t)$:
\begin{align*}
	\mathcal{M}\true_X (\Ee x\mid t)\,\varphi\;
	\text{ iff there is } &F:X\rightarrow\mathcal{P}^*\left(\overline{X(t)}\right)
	\text{ s.t. } \mathcal{M}\true_{X[F/x]}\varphi.
\end{align*}
This kind of quantification dually must give such values for $x$ that are \emph{not possible} for $t$. Or in a semantic game we can say that the values in the set $X(t)$ are ``banned'' from the verifier when (s)he chooses a value for $x$.  


Next we define these operators, as abbreviations, by using inclusion and exclusion atoms. We want their truth conditions to be as described above, but we give the definitions in a more general form by allowing the quantification of tuples instead of just single variables.

\begin{definition}\label{def: Existential quantification}
Let $\varphi\in\INEXset$, $\tuple{\vphantom\wedge\smash{t}}\in\Tset$ a $k$-tuple and let $\tuple x,\tuple u$ be $k$-tuples of variables s.t. the variables in $\tuple u$ are not  in $\vr(\tuple{\vphantom\wedge\smash{t}}\,)$. We use the following notations:
\begin{align*}
	(\Ee\tuple x\subseteq\tuple{\vphantom\wedge\smash{t}}\,)\,\varphi &:= \rec{\tuple{\vphantom\wedge\smash{t}}}{\tuple u}
	\Ee\tuple x\,(\tuple x\subseteq\tuple u\wedge\varphi) \\
	(\Ee\tuple x\mid\tuple{\vphantom\wedge\smash{t}}\,)\,\varphi &:= \rec{\tuple{\vphantom\wedge\smash{t}}}{\tuple u}
	\Ee\tuple x\,(\tuple x\mid\tuple u\wedge\varphi).
\end{align*}
\end{definition}
Note that the lengths of quantified tuples match the arities of atoms, i.e. if $\varphi\in\INEXset[k]$ and $\tuple x$, $\tuple{\vphantom\wedge\smash{t}}$ are $k$-tuples, then $(\Ee\tuple x\inc\tuple{\vphantom\wedge\smash{t}}\,)\,\varphi,(\Ee\tuple x\exc\tuple{\vphantom\wedge\smash{t}}\,)\,\varphi\in\INEX[k]$. Since only one type of atom is needed for each quantifier, $(\Ee\tuple x\inc\tuple{\vphantom\wedge\smash{t}}\,)\,\varphi\in\INCset[k]$ when $\varphi\in\INCset[k]$ and $(\Ee\tuple x\exc\tuple{\vphantom\wedge\smash{t}}\,)\,\varphi\in\EXC[k]$ when $\varphi\in\EXCset[k]$. Also note that $\fr((\Ee\tuple x\inc\tuple{\vphantom\wedge\smash{t}}\,)\,\varphi)=(\fr(\varphi)\setminus\{x\})\cup\vr(\tuple{\vphantom\wedge\smash{t}})=\fr((\Ee\tuple x\exc\tuple{\vphantom\wedge\smash{t}}\,)\,\varphi)$.

The next proposition presents the truth conditions given by Definition~\ref{def: Existential quantification}. 
This result might seem quite obvious, since the definition is so straightforward, but we nevertheless we present a proof here with all the technical details -- also considering the use of storing operator $\rec{\tuple{\vphantom\wedge\smash{t}}}{\tuple u}$.
In the proof of the next proposition, and from now on, we will write $\im(F):=\{F(s)\mid s\in X\}$ for any function~$F$ that is defined in some team $X$.

\begin{proposition}\label{the: Existential quantification}
With the same assumption as in Definition~\ref{def: Existential quantification}, we obtain the following truth conditions:
\begin{enumerate}
\item[a)] $\mathcal{M}\true_X (\Ee\tuple x\subseteq\tuple{\vphantom\wedge\smash{t}}\,)\,\varphi$\, iff there is $\mathcal{F}:X\rightarrow\mathcal{P}^*\left(X(\tuple{\vphantom\wedge\smash{t}}\,)\right)$ s.t. $\mathcal{M}\true_{X[\mathcal{F}/\tuple x\,]}\varphi$.
\item[b)] $\mathcal{M}\true_X (\Ee\tuple x\mid\tuple{\vphantom\wedge\smash{t}}\,)\,\varphi$\, iff there is $\mathcal{F}:X\rightarrow\mathcal{P}^*\left(\,\overline{X(\tuple{\vphantom\wedge\smash{t}}\,)}\,\right)$ s.t. $\mathcal{M}\true_{X[\mathcal{F}/\tuple x\,]}\varphi$.
\end{enumerate}
\end{proposition}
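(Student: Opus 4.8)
The plan is to peel off the abbreviations one layer at a time, reducing the statement to the semantics of $\Ee\tuple x$, of conjunction, and of the inclusion (resp.\ exclusion) atom, all the while keeping track of the auxiliary tuple $\tuple u$. I take $\tuple u$ to consist of fresh variables, as is implicit in the definition and permitted by the convention on fresh variables; in particular $\tuple u$ is disjoint from $\tuple x$, from $\fr(\varphi)$ and from $\dom(X)$. The case $X=\emptyset$ is immediate from the empty team property of $\INEX$, so I assume $X\neq\emptyset$.

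First I would apply Lemma~\ref{the: Storing operator} to the outermost storing operator $\rec{\tuple{t}}{\tuple u}$: $\mathcal{M}\true_X(\Ee\tuple x\subseteq\tuple{t}\,)\,\varphi$ holds iff $\mathcal{M}\true_{X'}\Ee\tuple x\,(\tuple x\subseteq\tuple u\wedge\varphi)$, where $X':=\{s[s(\tuple{t}\,)/\tuple u\,]\mid s\in X\}$; the lemma also gives $X'(\tuple u\,)=X(\tuple{t}\,)$, and, by freshness of $\tuple u$, the map $\beta\colon s\mapsto s[s(\tuple{t}\,)/\tuple u\,]$ is a bijection $X\to X'$. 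Next, by Proposition~\ref{the: Quantification of vectors} this is equivalent to the existence of some $\mathcal{F}'\colon X'\to\mathcal{P}^*(M^k)$ with $\mathcal{M}\true_{X'[\mathcal{F}'/\tuple x\,]}\tuple x\subseteq\tuple u$ and $\mathcal{M}\true_{X'[\mathcal{F}'/\tuple x\,]}\varphi$ (using the semantics of $\wedge$). Write $Y:=X'[\mathcal{F}'/\tuple x\,]$. Since $\tuple u$ and $\tuple x$ are disjoint and every $\mathcal{F}'(s')$ is nonempty, $Y(\tuple u\,)=X'(\tuple u\,)=X(\tuple{t}\,)$, while $Y(\tuple x\,)=\bigcup\im(\mathcal{F}')$; hence by Definition~\ref{def: Inclusion atom}, $\mathcal{M}\true_Y\tuple x\subseteq\tuple u$ holds exactly when $\bigcup\im(\mathcal{F}')\subseteq X(\tuple{t}\,)$, i.e.\ exactly when $\mathcal{F}'$ in fact maps $X'$ into $\mathcal{P}^*\bigl(X(\tuple{t}\,)\bigr)$. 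For part b) the same computation via Definition~\ref{def: Exclusion atom} shows $\mathcal{M}\true_Y\tuple x\mid\tuple u$ holds exactly when $\bigcup\im(\mathcal{F}')\cap X(\tuple{t}\,)=\emptyset$, i.e.\ exactly when $\mathcal{F}'$ maps $X'$ into $\mathcal{P}^*\bigl(\overline{X(\tuple{t}\,)}\bigr)$. This gives the desired equivalences, but with $X'$ in place of $X$.

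It remains to transport the statement back from $X'$ to $X$ along $\beta$: given $\mathcal{F}\colon X\to\mathcal{P}^*(X(\tuple{t}\,))$ I put $\mathcal{F}':=\mathcal{F}\circ\beta^{-1}$, and conversely $\mathcal{F}:=\mathcal{F}'\circ\beta$; using that $\tuple u$ is disjoint from $\dom(X)\cup\tuple x\supseteq\fr(\varphi)$ one checks $\bigl(X'[\mathcal{F}'/\tuple x\,]\bigr)\!\upharpoonright\!\bigl(\dom(X)\cup\tuple x\bigr)=X[\mathcal{F}/\tuple x\,]$, so by the locality of $\INEX$ (inherited from $\INC$ and $\EXC$) we get $\mathcal{M}\true_{X'[\mathcal{F}'/\tuple x\,]}\varphi$ iff $\mathcal{M}\true_{X[\mathcal{F}/\tuple x\,]}\varphi$. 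Chaining the three steps yields a), and b) follows verbatim with $\mid$ in place of $\subseteq$ and $\overline{X(\tuple{t}\,)}$ in place of $X(\tuple{t}\,)$. I expect the main obstacle to be nothing conceptual but rather the bookkeeping: making the correspondence between choice functions on $X$ and on $X'$ precise and invoking locality to discard the auxiliary coordinates $\tuple u$ from the team; every other step is a direct unwinding of the relevant definition.
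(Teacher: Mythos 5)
Your proof is correct and follows essentially the same route as the paper's: unwind the storing operator via Lemma~\ref{the: Storing operator}, pass choice functions back and forth between $X$ and $X'$ along the map $s\mapsto s[s(\tuple{\vphantom\wedge\smash{t}}\,)/\tuple u\,]$, and use locality to discard the auxiliary $\tuple u$-coordinates. The only (harmless) difference is that you verify the range condition on $\mathcal{F}'$ in one stroke from the set-level truth conditions $Y(\tuple x)=\bigcup\im(\mathcal{F}')$ and $Y(\tuple u)=X(\tuple{\vphantom\wedge\smash{t}}\,)$, whereas the paper chases individual assignments in each direction of each part.
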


\begin{proof}
By locality we may assume that the variables in $\tuple u$ are not in $\dom(X)$.
\medskip
\\
a) Suppose first that $\mathcal{\mathcal{M}}\true_X\,(\Ee\tuple x\inc\tuple{\vphantom\wedge\smash{t}}\,)\,\varphi$. Since $(\Ee\tuple x\inc\tuple{\vphantom\wedge\smash{t}}\,)\,\varphi=\rec{\tuple{\vphantom\wedge\smash{t}}}{\tuple u}\Ee\tuple x\,(\tuple x\!\subseteq\!\tuple u\wedge \varphi)$ by Lemma \ref{the: Storing operator} we have $\mathcal{M}\true_{X'}\Ee\tuple x\,(\tuple x\subseteq\tuple u\wedge \varphi)$, where $X'=\{s[s(\tuple{\vphantom\wedge\smash{t}}\,)/u]\mid s\in X\}$. Thus there exists $\mathcal{F}':X'\rightarrow\mathcal{P}^*(M^k)$ s.t. $\mathcal{M}\true_{X'[\mathcal{F}'/\tuple x\,]}\tuple x\subseteq\tuple u\wedge\varphi$. Let
\vspace{-0,1cm}\[
	\mathcal{F}:X\rightarrow\mathcal{P}^*(M), \;\; s\mapsto\mathcal{F}'(s[s(\tuple{\vphantom\wedge\smash{t}}\,)/\tuple u\,]).
\vspace{-0,1cm}\]
Since $\mathcal{M}\true_{X'[\mathcal{F'}/\tuple x\,]}\varphi$, by locality it is easy to see that $\mathcal{M}\true_{X[\mathcal{F}/\tuple x\,]}\varphi$. We still need to show that $\im(\mathcal{F})\subseteq\mathcal{P}(X(\tuple{\vphantom\wedge\smash{t}}\,))$. Since by Lemma \ref{the: Storing operator} we have $X'(\tuple u\,)=X(\tuple{\vphantom\wedge\smash{t}}\,)$, this amounts to showing that $\im(\mathcal{F}')\subseteq \mathcal{P}(X'(\tuple u\,))$:
Let $\mathcal{F'}(s)\in\im(\mathcal{F}')$ for some $s\in X'$ and let $\tuple a\in\mathcal{F'}(s)$. Let $r:=s[\tuple a/\tuple x\,]$ whence $r\in X'[\mathcal{F'}/\tuple x\,]$. Since $\mathcal{M}\true_{X'[\mathcal{F}'/\tuple x\,]}\tuple x\subseteq\tuple u$, there exists $r'\in X'[\mathcal{F}'/\tuple x\,]$ such that $r'(\tuple u)=r(\tuple x)$. Furthermore, $r'=s'[\tuple{\vphantom t\smash{b}}/\tuple x\,]$ for some $s'\in X'$ and $\tuple{\vphantom t\smash{b}}\in\mathcal{F}'(s')$. Now we have
\vspace{-0,1cm}\[
	\tuple a = s[\tuple a/\tuple x\,](\tuple x) = r(\tuple x) = r'(\tuple u) = s'(\tuple u) \in X'(\tuple u\,).
\vspace{-0,1cm}\]
Thus $\im(\mathcal{F}')\subseteq\mathcal{P}(X'(\tuple u\,))$, i.e. $\im(\mathcal{F})\subseteq\mathcal{P}(X(\tuple{\vphantom\wedge\smash{t}}\,))$.

\medskip

\noindent
Suppose then that there exists $\mathcal{F}:X\rightarrow\mathcal{P}^*(X(\tuple{\vphantom\wedge\smash{t}}\,))$ such that $\mathcal{M}\true_{X[\mathcal{F}/\tuple x\,]}\varphi$. Let $X'=\{s[s(\tuple{\vphantom\wedge\smash{t}}\,)/u]\mid s\in X\}$ and $\mathcal{F}':X'\rightarrow\mathcal{P}^*(M)$ such that $s\mapsto\mathcal{F}(\,s\!\upharpoonright\!\dom(X))$.

In order to show that $\mathcal{M}\true_{X'[\mathcal{F}'/\tuple x\,]}\tuple x\inc\tuple u$, let $r\in X'[\mathcal{F}'/\tuple x\,]$. Now there are $s\in X'$ and $\tuple a\in\mathcal{F}'(s)$ such that $r=s[\tuple a/\tuple x\,]$. Since $\im(\mathcal{F})\subseteq\mathcal{P}(X(\tuple{\vphantom\wedge\smash{t}}\,))$, by Lemma~\ref{the: Storing operator} also $\im(\mathcal{F}')\subseteq \mathcal{P}(X'(\tuple u\,))$. In particular, $\tuple a\in X'(\tuple u)$, and thus there exists $s'\in X'$ s.t. $s'(\tuple u)=\tuple a$. Let $\tuple{\vphantom t\smash{b}}\in\mathcal{F}'(s')$ and $r':=s'[\tuple{\vphantom t\smash{b}}/\tuple x\,]$. Now we have
\[
	r(\tuple x) = s[\tuple a/\tuple x\,](\tuple x) = \tuple a = s'(\tuple u) = r'(\tuple u).
\]
Thus $\mathcal{M}\true_{X'[\mathcal{F}'/\tuple x\,]}\tuple x\inc\tuple u$. Since $\mathcal{M}\true_{X[\mathcal{F}/\tuple x\,]}\varphi$, by locality $\mathcal{M}\true_{X'[\mathcal{F'}/\tuple x\,]}\varphi$. Hence $\mathcal{M}\true_{X'}\Ee\tuple x\,(\tuple x\subseteq\tuple u\wedge \varphi)$ and thus by Lemma \ref{the: Storing operator} we have $\mathcal{\mathcal{M}}\true_X(\Ee\tuple x\subseteq\tuple{\vphantom\wedge\smash{t}}\,)\,\varphi$.
\bigskip
\\
b) Suppose that $\mathcal{\mathcal{M}}\true_X\,(\Ee\tuple x\mid\tuple{\vphantom\wedge\smash{t}}\,)\,\varphi$. As in a), there exists $\mathcal{F}':X'\rightarrow\mathcal{P}^*(M^k)$ s.t. $\mathcal{M}\true_{X'[\mathcal{F}'/\tuple x\,]}\tuple x\exc\tuple u\wedge\varphi$. We can define the function $\mathcal{F}$ as in a), whence $\mathcal{M}\true_{X[\mathcal{F}/\tuple x\,]}\varphi$. Showing that $\im(\mathcal{F})\subseteq\mathcal{P}(\overline{X(\tuple{\vphantom\wedge\smash{t}}\,)})$ amounts to showing that $\im(\mathcal{F}')\subseteq\mathcal{P}(\overline{X'(\tuple u\,)})$:
For the sake of contradiction, suppose that there are $s\in X'$ and a tuple $\tuple a\in\mathcal{F}'(s)$ s.t. $\tuple a\in X'(\tuple u\,)$. Thus there exists $s'\in X'$ s.t. $s'(\tuple u) = \tuple a$. Let $r:=s[\tuple a/\tuple x\,]$ and $r':=s'[\tuple{\vphantom t\smash{b}}/\tuple x\,]$, where $\tuple{\vphantom t\smash{b}}\in\mathcal{F}(s')$. Now $r,r'\in X'[\mathcal{F}/\tuple x\,]$ and
$r(\tuple x) = s[\tuple a/\tuple x\,](\tuple x) = \tuple a = s'(\tuple u) = r'(\tuple u)$.
This is a contradiction since $\mathcal{M}\true_{X'[\mathcal{F}'/\tuple x]}\tuple x\!\mid\!\tuple u$, and thus $\im(\mathcal{F}')\subseteq\mathcal{P}(\overline{X'(\tuple u\,)})$.
\medskip
\\
Suppose then that there exists $\mathcal{F}:X\rightarrow\mathcal{P}^*(\overline{X(\tuple{\vphantom\wedge\smash{t}}\,)})$ s.t. $\mathcal{M}\true_{X[\mathcal{F}/\tuple x\,]}\varphi$. We can define $X'$ and $\mathcal{F}'$ as in a), whence $\mathcal{M}\true_{X'[\mathcal{F}'/\tuple x\,]}\varphi$ and $\im(\mathcal{F}')\subseteq\mathcal{P}(\overline{X'(\tuple u\,)})$.

In order to show that $\mathcal{M}\true_{X'[\mathcal{F}'/\tuple x\,]}\tuple x\mid\tuple u$, we suppose for the sake of contradiction that there exist $r,r'\in X'[\mathcal{F}'/\tuple x\,]$ such that $r(\tuple x)=r'(\tuple u)$. Now there exist $s,s'\in X'$, $\tuple a\in\mathcal{F}'(s)$ and $\tuple{\vphantom t\smash{b}}\in\mathcal{F}'(s')$, s.t. $r=s[\tuple a/\tuple x\,]$ and $r'=s'[\tuple{\vphantom t\smash{b}}/\tuple x\,]$. Now $\tuple a\in\mathcal{F'}(s)\in\im(\mathcal{F'})$, but also
\vspace{-0,1cm}\[
	\tuple a = s[\tuple a/\tuple x\,](\tuple x) = r(\tuple x) = r'(\tuple u) = s'(\tuple u) \in X'(\tuple u).
\vspace{-0,1cm}\]
This is a contradiction since $\im(\mathcal{F}')\subseteq\mathcal{P}(\overline{X'(\tuple u\,)})$. Hence $\mathcal{M}\true_{X'[\mathcal{F}'/\tuple x\,]}\tuple x\mid\tuple u$ and thus $\mathcal{M}\true_{X'}\Ee\tuple x\,(\tuple x\mid\tuple u\wedge \varphi)$. By Lemma \ref{the: Storing operator} we have $\mathcal{\mathcal{M}}\true_X(\Ee\tuple x\mid\tuple{\vphantom\wedge\smash{t}}\,)\,\varphi$.
\end{proof}

\begin{remark}
When defining these quantifiers, we did not want to put any restrictions on tuples $\tuple x,\tuple{\vphantom\wedge\smash{t}}$ and thus, in particular, we also allow the variables in $\tuple x$ to occur in $\tuple{\vphantom\wedge\smash{t}}$. This is why we need to use the storing operator, since the values of $\tuple{\vphantom\wedge\smash{t}}$ in a team might change after the quantification of $\tuple x$. 

If we would drop the storing operator from Definition~\ref{def: Existential quantification}, then the choice function $\mathcal{F}$ would be required to choose values within ($\subseteq$) or outside ($\;\mid\;$) the set $X[\mathcal{F}/\tuple x\,](\tuple{\vphantom\wedge\smash{t}}\,)$ instead of the set $X(\tuple{\vphantom\wedge\smash{t}}\,)$. Hence the values in the team \emph{after} the quantification would restrict the range of the choice function that is used for the quantification. This would lead to a very unnatural truth condition.
\end{remark}
Since quantifications may change the values of terms in a team, several identical consecutive existential inclusion/exclusion quantifications can change the meaning of a formula, as seen by the following example.  
\begin{example}
Assume that $c\in L$ is a constant symbol and $f\in L$ is a unary function symbol. We write
\vspace{-0,2cm}
\begin{align*}
	\varphi &:= \Es x\,(\Es x\inc fx)(x=c) \\
	\psi &:= \Es x\,(\Es x\inc fx)(\Es x\inc fx)(x=c). \\[-0,8cm]
\end{align*}
The sentences $\varphi$ and $\psi$ are not logically equivalent since we have
\vspace{-0,1cm}
\begin{align*}
	\mathcal{M}\true\varphi\, &\text{ iff } \mathcal{M}\true\Es x\,(fx=c), \text{ but }\\
	\mathcal{M}\true\psi\, &\text{ iff } \mathcal{M}\true\Es x\,(ffx=c).
\end{align*}
\end{example}
The following example presents a property that is not FO-definable, but can be expressed with a sentence containing a single existential inclusion quantifier. A similar example was presented originally for inclusion logic in \cite{Hella13}.
\begin{example}
A directed finite graph $\mathcal{G}=(V,E)$ contains a cycle if and only if the following holds:
\[
	\mathcal{G}\true \Ee x\,(\Ee y\inc x)Exy.
\]
\end{example}


\subsubsection*{Universal inclusion and exclusion quantifiers}


We define semantics for universal inclusion and exclusion quantifiers $(\Ae x\inc y)$ and $(\Ae x\exc y)$. Again we allow $y$ to be any $L$-term $t$ and first consider the semantics for these operators from an intuitive perspective. 
For \emph{universal inclusion quantifier} $(\Ae x\!\subseteq\!t)$ a natural reading would be: ``for all $x$ within the values of $t$''. This restricted universal quantification is done simply by quantifying $x$ over the set $X(t)$ instead of the whole universe $M$:
\begin{align*}
	\mathcal{M}\true_X (\Ae x\subseteq t)\,\varphi\,
	\;\,\text{ iff }\; \mathcal{M}\true_{X[A/x]}\varphi, \text{ where } A = X(t).
\end{align*}
A dualistic reading for \emph{universal exclusion quantifier} $(\Ae x\!\mid\!t)$ is ``for all $x$ outside the values of $t$''. This is achieved by quantifying $x$ over the \emph{complement} of $X(t)$:
\begin{align*}
	\mathcal{M}\true_X (\Ae x\,|\, t)\,\varphi\,
	\;\,\text{ iff }\; \mathcal{M}\true_{X[A/x]}\varphi, \text{ where } A = \overline{X(t)}.
\end{align*}
As with existential inclusion and exclusion quantifiers, we can observe the semantics above from a game-theoretic perspective by restricting the allowed moves of the players. This time, when choosing values for $x$, the \emph{falsifying} player may only choose the values of $t$ in the case of inclusion, and  the values of $t$ are forbidden from him/her in the case of exclusion.

Next we define these operators as abbreviations in INEX, aiming for the truth conditions as described above. Again we give the definitions in a more general form by using tuples instead of just single variables. The definitions here turn out to be much more complicated than the ones for existential quantifiers.

\begin{definition}\label{def: Universal quantification}
Let $\varphi\in\INEXset$, $\tuple{\vphantom\wedge\smash{t}}\in\Tset$ a $k$-tuple, $\tuple x$ a $k$-tuple of variables and $\tuple u$, $\tuple y$, $\tuple z$ \,$k$-tuples of fresh variables. We use the following notations:
\begin{align*}
	(\Ae\tuple x\subseteq\tuple{\vphantom\wedge\smash{t}}\,)\,\varphi := \rec{\tuple{\vphantom\wedge\smash{t}}}{\tuple u}
	\bigl(&\Ae\tuple x\,(\tuple x\subseteq\tuple u\wedge\varphi) \\[-0,2cm]
	&\sqcup\Ae\tuple x\,(\Ee\tuple y\subseteq\tuple u)(\Ee\tuple z\mid\tuple u)
	\bigl((\tuple x=\tuple y\,\wedge\varphi)\vee\tuple x=\tuple z\bigr)\bigr) \\
	(\Ae\tuple x\mid\tuple{\vphantom\wedge\smash{t}}\,)\,\varphi := 
	\rec{\tuple{\vphantom\wedge\smash{t}}}{\tuple u}\bigl(&\Ae\tuple x\,(\tuple x\subseteq\tuple u) \\[-0,2cm]
	&\sqcup\Ae\tuple x\,(\Ee\tuple y\subseteq\tuple u)(\Ee\tuple z\mid\tuple u)
	\bigl(\tuple x=\tuple y\,\vee(\tuple x=\tuple z\,\wedge\varphi)\bigr)\bigr).
\end{align*}
\end{definition}
Also here the arities match: if $\varphi\in\INEXset[k]$ and $\tuple x,\tuple{\vphantom\wedge\smash{t}}$ are $k$-tuples, then $(\Ae\tuple x\inc\tuple{\vphantom\wedge\smash{t}}\,)\,\varphi,(\Ae\tuple x\mid\tuple{\vphantom\wedge\smash{t}}\,)\,\varphi\in\INEX[k]$. But since we need both inclusion and exclusion atoms for both of these definitions, neither of these quantifiers can be defined in this way in just INC or EXC. It is thus natural to ask whether we could give these definitions in a way that only one type of atoms would be used for each definition -- we will get back to this question in the next subsection.

\begin{proposition}\label{the: Universal quantification}
With the same assumptions as in Definition \ref{def: Universal quantification}, we obtain the following truth conditions:
\begin{enumerate}
\item[a)] $\mathcal{M}\true_X (\Ae\tuple x\subseteq\tuple{\vphantom\wedge\smash{t}}\,)\,\varphi$ \;iff\, $\mathcal{M}\true_{X[A/\tuple x\,]}\varphi$, where $A = X(\tuple{\vphantom\wedge\smash{t}}\,)$.
\item[b)] $\mathcal{M}\true_X (\Ae\tuple x\mid\tuple{\vphantom\wedge\smash{t}}\,)\,\varphi$ \;iff\, $\mathcal{M}\true_{X[A/\tuple x\,]}\varphi$, where $A = \overline{X(\tuple{\vphantom\wedge\smash{t}}\,)}$.	
\end{enumerate}
\end{proposition}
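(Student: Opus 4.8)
The plan is to unfold the abbreviation in Definition~\ref{def: Universal quantification} and reduce everything to the already-established truth conditions for the storing operator (Lemma~\ref{the: Storing operator}), intuitionistic disjunction (Proposition~\ref{the: Intuitionistic disjunction}), and the existential inclusion/exclusion quantifiers (Proposition~\ref{the: Existential quantification}). By locality we may first assume that the variables in $\tuple u,\tuple y,\tuple z$ are not in $\dom(X)$. Applying Lemma~\ref{the: Storing operator} replaces $X$ by $X'=\{s[s(\tuple{\vphantom\wedge\smash{t}}\,)/\tuple u\,]\mid s\in X\}$ with $X'(\tuple u\,)=X(\tuple{\vphantom\wedge\smash{t}}\,)=:A$, so it suffices to prove, for part a),
\[
	\mathcal{M}\true_{X'}\Ae\tuple x\,(\tuple x\subseteq\tuple u\wedge\varphi)
	\;\sqcup\;\Ae\tuple x\,(\Ee\tuple y\subseteq\tuple u)(\Ee\tuple z\mid\tuple u)
	\bigl((\tuple x=\tuple y\,\wedge\varphi)\vee\tuple x=\tuple z\bigr)
	\quad\text{iff}\quad \mathcal{M}\true_{X'[A/\tuple x\,]}\varphi,
\]
and the symmetric statement for part b).

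For part a), the first disjunct handles the case $A=M$: when $A=M$ we have $X'[M/\tuple x\,]=X'[A/\tuple x\,]$ and the atom $\tuple x\subseteq\tuple u$ is automatically satisfied there, so $\mathcal{M}\true_{X'}\Ae\tuple x\,(\tuple x\subseteq\tuple u\wedge\varphi)$ is equivalent to $\mathcal{M}\true_{X'[A/\tuple x\,]}\varphi$; conversely if that first disjunct holds then evaluating $\tuple x\subseteq\tuple u$ on the team $X'[M/\tuple x\,]$ (using $X'(\tuple u\,)=A$) forces $M\subseteq A$, i.e.\ $A=M$. The second disjunct handles the case $A\subsetneq M$ (in fact it works whenever $A\neq\emptyset$ and $\overline A\neq\emptyset$, and when $A=\emptyset$ the claim is degenerate since then $X=\emptyset$). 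Here one starts from the universally quantified team $X'[M/\tuple x\,]$, splits each assignment according to whether its $\tuple x$-value lies in $A$ or in $\overline A$, uses Proposition~\ref{the: Existential quantification} to pick $\mathcal{F}_{\tuple y}$ with values in $X'(\tuple u\,)=A$ and $\mathcal{F}_{\tuple z}$ with values in $\overline{X'(\tuple u\,)}=\overline A$, and observes that the inner disjunction $(\tuple x=\tuple y\wedge\varphi)\vee\tuple x=\tuple z$ can be satisfied by sending the ``$\tuple x\in A$'' part to the left disjunct (forcing $\tuple y=\tuple x$, so that $\varphi$ must hold on exactly those assignments whose $\tuple x$-value is in $A$, which after projecting away $\tuple y,\tuple z$ is $X'[A/\tuple x\,]$) and the ``$\tuple x\in\overline A$'' part to the right disjunct (forcing $\tuple z=\tuple x$, with no constraint from $\varphi$). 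The converse direction is the more delicate bookkeeping: given $\mathcal{M}\true_{X'[A/\tuple x\,]}\varphi$, one must exhibit the choice functions and the team split witnessing the second disjunct, and check the inclusion/exclusion atoms as in the proof of Proposition~\ref{the: Existential quantification}. Part b) is entirely dual: the roles of $A$ and $\overline A$ are swapped, the first disjunct $\Ae\tuple x\,(\tuple x\subseteq\tuple u)$ now detects the case $A=M$ (where $\overline A=\emptyset$ and the statement is degenerate) versus $A\neq M$, and in the inner disjunction $\tuple x=\tuple y\,\vee(\tuple x=\tuple z\wedge\varphi)$ it is the $\tuple z$-branch (values outside $A$) that carries $\varphi$, matching $A=\overline{X(\tuple{\vphantom\wedge\smash{t}}\,)}$.

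The main obstacle I expect is the careful handling of the boundary cases $A=\emptyset$ and $A=M$ (equivalently $X=\emptyset$, single-element-like universes, or $\overline A=\emptyset$): the whole point of wrapping the construction in an intuitionistic disjunction is precisely to cover these, and one has to verify that in each regime exactly one disjunct does the work and that $\mathcal{P}^*$ (nonempty power set) in Proposition~\ref{the: Existential quantification} does not cause trouble when $A$ or $\overline A$ is empty. The rest is a somewhat lengthy but routine tracking of assignments through the nested quantifiers and the team splits, exactly in the style of the proof of Proposition~\ref{the: Existential quantification}, together with repeated appeals to locality to discard the auxiliary variables $\tuple u,\tuple y,\tuple z$.
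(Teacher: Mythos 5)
Your plan is correct and follows essentially the same route as the paper: locality plus the storing operator (Lemma~\ref{the: Storing operator}), the first disjunct of the intuitionistic disjunction handling the case $X(\tuple{\vphantom\wedge\smash{t}}\,)=M^k$, and the second disjunct forcing the universally quantified team to split into the assignments whose $\tuple x$-values lie in $X(\tuple{\vphantom\wedge\smash{t}}\,)$ and those in its complement, via the equalities $\tuple x=\tuple y$, $\tuple x=\tuple z$ and the ranges of the inclusion/exclusion quantifiers. The only organizational difference is that the paper isolates this splitting argument as a standalone claim with placeholders $\psi,\theta$ (applied with $\psi:=\varphi$, $\theta:=(\tuple x=\tuple x)$ for a) and the symmetric choice for b)), which is precisely the duality you invoke.
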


The idea of the proofs for these truth conditions is that the trivial case when $X(\tuple{\vphantom\wedge\smash{t}}\,)=M^k$ is dealt on left side of the intuitionistic disjunction. If $X(\tuple{\vphantom\wedge\smash{t}}\,)\neq M^k$, we first universally quantify $\tuple x$ and then split the resulting team into subteams $Y,Y'$ such that $Y=X[X(\tuple{\vphantom\wedge\smash{t}}\,)/\tuple x\,]$ and $Y'=X[\overline{X(\tuple{\vphantom\wedge\smash{t}}\,)}/\tuple x\,]$. Then we just say that the formula $\varphi$ holds on the desired side.

We first prove the following claim which shows how we can force the team $X[M^k/\tuple x\,]$ to be split into the subteams $X[X(\tuple{\vphantom\wedge\smash{t}}\,)/\tuple x\,]$ and  $X[\overline{X(\tuple{\vphantom\wedge\smash{t}}\,)}/\tuple x\,]$.

\begin{claim}\label{uni}
Let $\psi,\theta\in\INEXset$, $\tuple{\vphantom\wedge\smash{t}}\in\Tset$, let $\tuple x$ be a $k$-tuple of variables, and let $\tuple y,\tuple z$ be $k$-tuples of fresh variables. We additionally assume here that $X(\tuple{\vphantom\wedge\smash{t}}\,)\neq M^k$ and that the variables in the tuple $\tuple x$ are not in $\vr(\tuple{\vphantom\wedge\smash{t}}\,)$. Let
\begin{align*}
	\xi \,:=\, \Ae\tuple x\,&(\Ee\tuple y\subseteq\tuple{\vphantom\wedge\smash{t}}\,)(\Ee\tuple z\mid\tuple{\vphantom\wedge\smash{t}}\,)
	\bigl((\tuple x=\tuple y\wedge\psi)\vee(\tuple x=\tuple z\wedge\theta)\bigr).
\end{align*}
Now we have: \qquad
$
	\mathcal{M}\true_X\xi \;\;\text{ iff }\;\, \mathcal{M}\true_{X[X(\tuple{\vphantom\wedge\smash{t}}\,)/\tuple x\,]}\psi
	\text{ and } \mathcal{M}\true_{X[\overline{X(\tuple{\vphantom\wedge\smash{t}}\,)}/\tuple x\,]}\theta.
$
\end{claim}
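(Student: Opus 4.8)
The plan is to prove both directions of the equivalence by unwinding the semantics of the operators involved, using Proposition~\ref{the: Existential quantification} for the existential inclusion and exclusion quantifiers and Proposition~\ref{the: Quantification of vectors}(b) for the universal quantifier $\Ae\tuple x$. First I would observe that by Proposition~\ref{the: Quantification of vectors}(b), $\mathcal{M}\true_X\xi$ holds iff the team $Z:=X[M^k/\tuple x\,]$ satisfies $(\Ee\tuple y\subseteq\tuple{\vphantom\wedge\smash{t}}\,)(\Ee\tuple z\mid\tuple{\vphantom\wedge\smash{t}}\,)\bigl((\tuple x=\tuple y\wedge\psi)\vee(\tuple x=\tuple z\wedge\theta)\bigr)$. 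Note that since the variables of $\tuple x$ are fresh with respect to $\tuple{\vphantom\wedge\smash{t}}$, we have $Z(\tuple{\vphantom\wedge\smash{t}}\,)=X(\tuple{\vphantom\wedge\smash{t}}\,)$, so the subsequent inclusion and exclusion quantifiers range over $X(\tuple{\vphantom\wedge\smash{t}}\,)$ and $\overline{X(\tuple{\vphantom\wedge\smash{t}}\,)}$ respectively.

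Next I would unpack the two existential quantifiers using Proposition~\ref{the: Existential quantification}: there exist $\mathcal{F}:Z\rightarrow\mathcal{P}^*(X(\tuple{\vphantom\wedge\smash{t}}\,))$ and then $\mathcal{G}:Z[\mathcal{F}/\tuple y\,]\rightarrow\mathcal{P}^*(\overline{X(\tuple{\vphantom\wedge\smash{t}}\,)})$ such that the team $W:=Z[\mathcal{F}/\tuple y\,][\mathcal{G}/\tuple z\,]$ satisfies the disjunction $(\tuple x=\tuple y\wedge\psi)\vee(\tuple x=\tuple z\wedge\theta)$. Unwinding the disjunction, $W$ splits as $W=W_1\cup W_2$ with $\mathcal{M}\true_{W_1}\tuple x=\tuple y\wedge\psi$ and $\mathcal{M}\true_{W_2}\tuple x=\tuple z\wedge\theta$. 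The key combinatorial point for the forward direction: on $W_1$ we have $s(\tuple x)=s(\tuple y)\in X(\tuple{\vphantom\wedge\smash{t}}\,)$ for all $s$, and on $W_2$ we have $s(\tuple x)=s(\tuple z)\in\overline{X(\tuple{\vphantom\wedge\smash{t}}\,)}$ for all $s$; since every element of $M^k$ lies in exactly one of $X(\tuple{\vphantom\wedge\smash{t}}\,)$ or $\overline{X(\tuple{\vphantom\wedge\smash{t}}\,)}$, and since $Z=X[M^k/\tuple x\,]$ contains all these assignments, the restriction of $W_1$ to the original variables together with the $\tuple x$-value must be exactly $X[X(\tuple{\vphantom\wedge\smash{t}}\,)/\tuple x\,]$ after projecting away $\tuple y,\tuple z$ — and similarly $W_2$ projects to $X[\overline{X(\tuple{\vphantom\wedge\smash{t}}\,)}/\tuple x\,]$. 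Using locality (the fresh variables $\tuple y,\tuple z$ do not occur in $\psi$ or $\theta$) one concludes $\mathcal{M}\true_{X[X(\tuple{\vphantom\wedge\smash{t}}\,)/\tuple x\,]}\psi$ and $\mathcal{M}\true_{X[\overline{X(\tuple{\vphantom\wedge\smash{t}}\,)}/\tuple x\,]}\theta$.

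For the converse, assuming the right-hand side, I would explicitly construct the choice functions: define $\mathcal{F}$ on $Z=X[M^k/\tuple x\,]$ by sending $s$ to the singleton $\{s(\tuple x)\}$ if $s(\tuple x)\in X(\tuple{\vphantom\wedge\smash{t}}\,)$, and to some fixed nonempty subset of $X(\tuple{\vphantom\wedge\smash{t}}\,)$ otherwise (the dummy case, which is harmless because that assignment will go into $W_2$); define $\mathcal{G}$ dually on the resulting team, sending each assignment to the singleton $\{s(\tuple x)\}$ when $s(\tuple x)\in\overline{X(\tuple{\vphantom\wedge\smash{t}}\,)}$ and to a fixed dummy value otherwise. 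Then split $W=W_1\cup W_2$ according to whether $s(\tuple x)\in X(\tuple{\vphantom\wedge\smash{t}}\,)$; on $W_1$ the construction forces $s(\tuple x)=s(\tuple y)$, and $W_1$ restricted appropriately equals $X[X(\tuple{\vphantom\wedge\smash{t}}\,)/\tuple x\,]$ up to the fresh variables, so locality gives $\mathcal{M}\true_{W_1}\psi$; symmetrically for $W_2$ and $\theta$. This verifies $\mathcal{M}\true_X\xi$.

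I expect the main obstacle to be the bookkeeping of which assignments land on which side of the splitting $W=W_1\cup W_2$, and making precise the claim that $W_1$ and $W_2$, after forgetting the auxiliary tuples $\tuple y$ and $\tuple z$, are exactly $X[X(\tuple{\vphantom\wedge\smash{t}}\,)/\tuple x\,]$ and $X[\overline{X(\tuple{\vphantom\wedge\smash{t}}\,)}/\tuple x\,]$ — in the forward direction this requires arguing that the split is \emph{forced}, i.e.\ that an assignment with $s(\tuple x)\in X(\tuple{\vphantom\wedge\smash{t}}\,)$ cannot be placed in $W_2$ (because there $s(\tuple x)=s(\tuple z)\in\overline{X(\tuple{\vphantom\wedge\smash{t}}\,)}$, a contradiction) and vice versa, so the partition is uniquely determined; and that every required assignment is present, which follows since $Z$ already contains every $\tuple x$-variant. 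Carefully invoking locality to discard the fresh variables at the end is the other routine-but-delicate point.
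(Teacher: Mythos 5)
Your proposal is correct and follows essentially the same route as the paper's own proof: unfold $\Ae\tuple x$ and the two existential inclusion/exclusion quantifiers (Propositions~\ref{the: Quantification of vectors} and~\ref{the: Existential quantification}), note that the split of the resulting team is forced because no assignment can have $s(\tuple x)=s(\tuple y)\in X(\tuple t\,)$ and $s(\tuple x)=s(\tuple z)\in\overline{X(\tuple t\,)}$ simultaneously, identify the projections of the two halves with $X[X(\tuple t\,)/\tuple x\,]$ and $X[\overline{X(\tuple t\,)}/\tuple x\,]$, and in the converse direction use exactly the paper's ``identity-or-dummy'' choice functions. The only detail the paper adds is dispatching the trivial case $X=\emptyset$ first, which is what guarantees that the dummy value $\tuple a^*\in X(\tuple t\,)$ needed for your function $\mathcal{F}$ actually exists.
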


\begin{proof}
By locality we may assume for this proof that $\dom(X)=\fr(\xi)$.

\medskip

\noindent
Suppose first that we have $\mathcal{M}\true_X\xi$. Thus there exist $\mathcal{F}_1:X_1\rightarrow\mathcal{P}^*(X_1(\tuple{\vphantom\wedge\smash{t}}\,))$ and $\mathcal{F}_2:X_2\rightarrow\mathcal{P}^*(\overline{X_2(\tuple{\vphantom\wedge\smash{t}}\,)})$ such that $\mathcal{M}\true_{X_3}(\tuple x=\tuple y\wedge\psi)\vee (\tuple x=\tuple z\wedge\theta)$, where $X_1:=X[M^k/\tuple x\,]$, $X_2:=X_1[\mathcal{F}_1/\tuple y\,]$ and $X_3:=X_2[\mathcal{F}_2/\tuple z\,]$. 
Furthermore there exist $Y,Y'\subseteq X_3$ such that $Y\cup Y'=X_3$, $\mathcal{M}\true_{Y}\tuple x\!=\!\tuple y\wedge\psi$ and $\mathcal{M}\true_{Y'}\tuple x\!=\!\tuple z\wedge\theta$. Since $\mathcal{M}\true_{Y}\psi$, $\mathcal{M}\true_{Y'}\theta$ and by the assumption $\tuple y,\tuple z\notin\dom(X_1)$, by locality it is sufficient to show that 
\[
	X[X(\tuple{\vphantom\wedge\smash{t}}\,)/\tuple x\,]=Y\!\upharpoonright\!\dom(X_1) \;\text{ and }\;
	X[\overline{X(\tuple{\vphantom\wedge\smash{t}}\,)}/\tuple x\,]=Y'\!\upharpoonright\!\dom(X_1).
\]
For the sake of proving that $X[X(\tuple{\vphantom\wedge\smash{t}}\,)/\tuple x\,]\subseteq Y\!\upharpoonright\!\dom(X_1)$ let $s\in X[X(\tuple{\vphantom\wedge\smash{t}}\,)/\tuple x\,]$. Let then $r:=s[\tuple a/\tuple y,\tuple{\vphantom t\smash{b}}/\tuple z\,]$, where $\tuple a\in~\!\!\!\mathcal{F}_1(s)$ and $\tuple{\vphantom t\smash{b}}\in\mathcal{F}_2(s[\tuple a/\tuple y\,])$, whence $r\in X_3$. If we would have $r\in Y'$, then we would have $r(\tuple x)=r(\tuple z)=\tuple{\vphantom t\smash{b}}\in\overline{X_2(\tuple{\vphantom\wedge\smash{t}}\,)}$. This is impossible since $r(\tuple x)=s(\tuple x)\in X(\tuple{\vphantom\wedge\smash{t}}\,)=X_2(\tuple{\vphantom\wedge\smash{t}}\,)$. Hence it has to be that $r\in Y$ and thus $s\in Y\!\upharpoonright\!\dom(X_1)$. Therefore $X[X(\tuple{\vphantom\wedge\smash{t}}\,)/\tuple x\,]\subseteq Y\!\upharpoonright\!\dom(X_1)$.

Let then $s\in Y\!\upharpoonright\!\dom(X_1)$. Now there exists $r\in Y$ such that $r=s[\tuple a/\tuple y,\tuple{\vphantom t\smash{b}}/\tuple z\,]$ for some tuples $\tuple a\in\mathcal{F}_1(s)$ and $\tuple{\vphantom t\smash{b}}\in\mathcal{F}_2(s[\tuple a/\tuple y\,])$. Now $r(\tuple x)=r(\tuple y)=\tuple a\in X_1(\tuple{\vphantom\wedge\smash{t}}\,)$ since $r\in Y$. Therefore $s(\tuple x)\in X_1(\tuple{\vphantom\wedge\smash{t}}\,)=X(\tuple{\vphantom\wedge\smash{t}}\,)$ and thus $s\in X[X(\tuple{\vphantom\wedge\smash{t}}\,)/\tuple x\,]$. Hence we have shown that $X[X(\tuple{\vphantom\wedge\smash{t}}\,)/\tuple x\,] = Y\!\upharpoonright\!\dom(X_1)$. We can show with similar reasoning that also $X[\overline{X(\tuple{\vphantom\wedge\smash{t}}\,)}/\tuple x\,] = Y'\!\upharpoonright\!\dom(X_1)$.
\medskip
\\
Suppose then that $\mathcal{M}\true_{X[X(\tuple{\vphantom\wedge\smash{t}}\,)/\tuple x\,]}\psi$ and $\mathcal{M}\true_{X[\overline{X(\tuple{\vphantom\wedge\smash{t}}\,)}/\tuple x\,]}\theta$. We may assume that $X$ is nonempty, because otherwise the claim would hold trivially. Since now $X(\tuple{\vphantom\wedge\smash{t}}\,)\neq\emptyset$ and by assumption $X(\tuple{\vphantom\wedge\smash{t}}\,)\neq M^k$, there exist $\tuple a^*\in X(\tuple{\vphantom\wedge\smash{t}}\,)$ and $\tuple{b}^*\in\overline{X(\tuple{\vphantom\wedge\smash{t}}\,)}$. Let $X_1:=X[M^k/\tuple x\,]$ and
\vspace{-0,1cm}
\[
	\begin{cases}
		\mathcal{F}_1:X_1\rightarrow\mathcal{P}^*(M^k) \,\text{ s.t. }
		\begin{cases}
			s\mapsto\{s(\tuple x)\}\, \text{ if } s(\tuple x)\in X_1(\tuple{\vphantom\wedge\smash{t}}\,) \\
			s\mapsto\{\tuple a^*\}\quad \text{ else}
		\end{cases}\\
		\hspace{1.4cm} X_2:=X_1[\mathcal{F}_1/\tuple y\,]\\
		\mathcal{F}_2:X_2\rightarrow\mathcal{P}^*(M^k) \,\text{ s.t. }
		\begin{cases}
			s\mapsto\{s(\tuple x)\}\, \text{ if } s(\tuple x)\in\overline{X_2(\tuple{\vphantom\wedge\smash{t}}\,)} \\
			s\mapsto\{\tuple{b}^*\}\quad \text{ else}
		\end{cases}\\
		\hspace{1.4cm} X_3:=X_2[\mathcal{F}_2/\tuple z\,].
	\end{cases}
\]
Clearly $\im(\mathcal{F}_1)\subseteq\mathcal{P}^*(X_1(\tuple{\vphantom\wedge\smash{t}}\,))$ and $\im(\mathcal{F}_2)\subseteq\mathcal{P}^*(\overline{X_2(\tuple{\vphantom\wedge\smash{t}}\,)})$. We define the teams $Y:=\{s\in X_3\mid s(\tuple x)\in X_3(\tuple{\vphantom\wedge\smash{t}}\,)\}$ and $Y':=\{s\in X_3\mid s(\tuple x)\in\overline{X_3(\tuple{\vphantom\wedge\smash{t}}\,)}\}$. Now clearly $Y\cup Y'=X_3$, $\mathcal{M}\true_{Y}\tuple x=\tuple y$ and $\mathcal{M}\true_{Y'}\tuple x=\tuple z$. Next we show that $X[X(\tuple{\vphantom\wedge\smash{t}}\,)/\tuple x\,]=Y\!\upharpoonright\!\dom(X_1)$ and $X[\overline{X(\tuple{\vphantom\wedge\smash{t}}\,)}/\tuple x\,]=Y'\!\upharpoonright\!\dom(X_1)$.

Let $s\in X[X(\tuple{\vphantom\wedge\smash{t}}\,)/\tuple x\,]$ and let $r:=s[s(\tuple x)/\tuple y,\,\tuple{b}^*/\tuple z\,]$, whence $r\in X_3$. If we would have $r\in Y'$, then it would hold that $r(\tuple x)=r(\tuple z)=\tuple{b}^*\notin X(\tuple{\vphantom\wedge\smash{t}}\,)$. But this is not possible since $r(\tuple x)=s(\tuple x)\in X(\tuple{\vphantom\wedge\smash{t}}\,)$. Hence $r\in Y$, and thus $s\in Y\!\upharpoonright\!\dom(X_1)$.

Let then $s\in Y\!\upharpoonright\!\dom(X_1)$, whence there exists $r\in Y$ s.t. $r=s[\tuple a/\tuple y,\tuple{\vphantom t\smash{b}}/\tuple z\,]$ for some $\tuple a\in\mathcal{F}_1(s)$ and $\tuple{\vphantom t\smash{b}}\in\mathcal{F}_2(s[\tuple a/\tuple y\,])$. Now $r(\tuple x)\in X_3(\tuple{\vphantom\wedge\smash{t}}\,)=X(\tuple{\vphantom\wedge\smash{t}}\,)$ since $r\in Y$. Therefore also $s(\tuple x)\in X(\tuple{\vphantom\wedge\smash{t}}\,)$ and thus $s\in X[X(\tuple{\vphantom\wedge\smash{t}}\,)/\tuple x\,]$.

Hence we have shown that $X[X(\tuple{\vphantom\wedge\smash{t}}\,)/\tuple x\,] = Y\!\upharpoonright\!\dom(X_1)$. We can show with similar reasoning that also $X[\overline{X(\tuple{\vphantom\wedge\smash{t}}\,)}/\tuple x\,] = Y'\!\upharpoonright\!\dom(X_1)$, and thus by locality we have $\mathcal{M}\true_{Y}\psi$ and $\mathcal{M}\true_{Y'}\theta$. Hence $\mathcal{M}\true_{X_3}(\tuple x\!=\!\tuple y\wedge\psi)\vee (\tuple x\!=\!\tuple z\wedge\theta)$, and furthermore we have $\mathcal{M}\true_X\xi$.
\end{proof}

Now we are ready to prove the truth conditions for universal inclusion and exclusion quantifiers (Proposition \ref{the: Universal quantification}). We have already done most of the work by proving Claim \ref{uni}. We only need to consider the use of storing operator and the special case when $X(\tuple{\vphantom\wedge\smash{t}}\,)=M^k$. When using the storing operator, we may drop the extra assumption that $\vr(\tuple x)\cap\vr(\tuple{\vphantom\wedge\smash{t}}\,)=\emptyset$. In the  special case when $X(\tuple{\vphantom\wedge\smash{t}}\,)=M^k$ the universal inclusion quantifier $(\Ae\tuple x\inc\tuple{\vphantom\wedge\smash{t}}\,)$ becomes the normal universal quantifier $\Ae\tuple x$ and the universal exclusion quantifier $(\Ae\tuple x\exc\tuple{\vphantom\wedge\smash{t}}\,)$ becomes trivially true.

\begin{proof}
(Proposition \ref{the: Universal quantification})
In this proof we write $X':=\{s[s(\tuple{\vphantom\wedge\smash{t}}\,)/\tuple u\,]\mid s\in X\}$. 

\medskip

\noindent
a) Suppose first that $\mathcal{\mathcal{M}}\true_X(\Ae\tuple x\subseteq\tuple{\vphantom\wedge\smash{t}}\,)\,\varphi$. By Lemma \ref{the: Storing operator} and the truth condition of intuitionistic disjunction we have $\mathcal{M}\true_{X'}\Ae\tuple x\,(\tuple x\subseteq\tuple u\,\wedge\varphi)$ or
\vspace{-0,1cm}
\begin{align*}
	\mathcal{M}\true_{X'}\Ae\tuple x\,&(\Ee\tuple y\subseteq\tuple u)(\Ee\tuple z\mid\tuple u)
	\bigl((\tuple x=\tuple y\wedge\varphi)\vee\tuple x=\tuple z)\bigr). \tag{$\star$}
\vspace{-0,1cm}
\end{align*}
Suppose first that $\mathcal{M}\true_{X'}\,\Ae\tuple x\,(\tuple x\inc\tuple u\,\wedge\varphi)$. Since $\mathcal{M}\true_{X'[M^k/\tuple x]}\tuple x\inc\tuple u\,\wedge\varphi$, we clearly have $X'(\tuple u)=M^k$ and $\mathcal{M}\true_{X'}\Ae\tuple x\,\varphi$. By Lemma \ref{the: Storing operator}, $X(\tuple{\vphantom\wedge\smash{t}}\,)=M^k$, and by locality $\mathcal{M}\true_{X}\Ae\tuple x\,\varphi$. Since now $X[X(\tuple{\vphantom\wedge\smash{t}}\,)/\tuple x\,]=X[M^k/\tuple x\,]$, we have $\mathcal{M}\true_{X[X(\tuple{\vphantom\wedge\smash{t}}\,)/\tuple x\,]}\varphi$.

Suppose then that ($\star$) holds. Note that since $\tuple z$ can be quantified within the complement of $\tuple u$, it cannot be the case that $X(\tuple u)=M^k$. By choosing $\psi:=\varphi$ and $\theta:=(\tuple x\!=\!\tuple x)$ we can apply Claim \ref{uni} to obtain $\mathcal{M}\true_{X'[X'(\tuple u\,)/\tuple x\,]}\varphi$. Since $X(\tuple{\vphantom\wedge\smash{t}}\,)=X'(\tuple u)$, by locality we have $\mathcal{M}\true_{X[X(\tuple{\vphantom\wedge\smash{t}}\,)/\tuple x\,]}\varphi$.
\medskip
\\
Suppose then that $\mathcal{M}\true_{X[X(\tuple{\vphantom\wedge\smash{t}}\,)/\tuple x\,]}\varphi$. If $X(\tuple{\vphantom\wedge\smash{t}}\,)=M^k$, then it is easy to see that $\mathcal{M}\true_{X'}\Ae\tuple x\,(\tuple x\subseteq\tuple u\wedge\varphi)$ and thus $\mathcal{\mathcal{M}}\true_X(\Ae\tuple x\subseteq\tuple{\vphantom\wedge\smash{t}}\,)\,\varphi$. Thus we may assume that $X(\tuple{\vphantom\wedge\smash{t}}\,)\neq M^k$. By Lemma \ref{the: Storing operator} we have $\mathcal{M}\true_{X'[X'(\tuple u\,)/\tuple x]}\varphi$ and thus by applying Claim~\ref{uni} for $\psi:=\varphi$ and $\theta:=(\tuple x\!=\!\tuple x)$, we obtain ($\star$). Hence $\mathcal{\mathcal{M}}\true_X(\Ae\tuple x\subseteq\tuple{\vphantom\wedge\smash{t}}\,)\,\varphi$.
\medskip
\\
b) Suppose first that $\mathcal{\mathcal{M}}\true_X(\Ae\tuple x\mid\tuple{\vphantom\wedge\smash{t}}\,)\,\varphi$. Now we have $\mathcal{M}\true_{X'}\Ae\tuple x\,(\tuple x\subseteq\tuple u)$ or
\vspace{-0,1cm}
\begin{align*}
	\mathcal{M}\true_{X'}\Ae\tuple x\,
	&(\Ee\tuple y\subseteq\tuple{\vphantom\wedge\smash{t}})(\Ee\tuple z\mid\tuple{\vphantom\wedge\smash{t}})
	\bigl(\tuple x=\tuple y\,\vee(\tuple x=\tuple z\,\wedge\varphi))\bigr). \tag{$\star\star$}
\vspace{-0,1cm}
\end{align*}
Suppose first that $\mathcal{M}\true_{X'}\Ae\tuple x\,(\tuple x\inc\tuple u)$. Therefore we have $X'(\tuple u)=M^k$ and since $X(\tuple{\vphantom\wedge\smash{t}}\,)=X'(\tuple u)$, we obtain $\overline{X(\tuple{\vphantom\wedge\smash{t}}\,)}=\emptyset$.  Now $X[\overline{X(\tuple{\vphantom\wedge\smash{t}}\,)}/\tuple x\,]=\emptyset$  and thus trivially $\mathcal{M}\true_{X[\overline{X(\tuple{\vphantom\wedge\smash{t}}\,)}/\tuple x\,]}\varphi$.
Suppose then that ($\star\star$) holds. By choosing $\psi:=(\tuple x\!=\!\tuple x)$ and $\theta:=\varphi$ we obtain $\mathcal{M}\true_{X'[\overline{X'(\tuple u\,)}/\tuple x\,]}\varphi$ by Claim~\ref{uni}, and thus $\mathcal{M}\true_{X[\overline{X(\tuple{\vphantom\wedge\smash{t}}\,)}/\tuple x\,]}\varphi$.
\medskip
\\
Suppose then that $\mathcal{M}\true_{X[\overline{X(\tuple{\vphantom\wedge\smash{t}}\,)}/\tuple x\,]}\varphi$. If $X(\tuple{\vphantom\wedge\smash{t}}\,)=M^k$, clearly $\mathcal{M}\true_{X'}\Ae\tuple x\,(\tuple x\subseteq\tuple u)$ and thus $\mathcal{\mathcal{M}}\true_X(\Ae\tuple x\exc\tuple{\vphantom\wedge\smash{t}}\,)\,\varphi$. Hence we may assume that $X(\tuple{\vphantom\wedge\smash{t}}\,)\neq M^k$. By the assumption $\mathcal{M}\true_{X'[\overline{X'(\tuple u\,)}/\tuple x\,]}\varphi$ and thus by applying Claim \ref{uni} for $\psi:=(\tuple x\!=\!\tuple x)$ and $\theta:=\varphi$, we obtain ($\star\star$). Hence we have $\mathcal{\mathcal{M}}\true_X(\Ae\tuple x\mid\tuple{\vphantom\wedge\smash{t}}\,)\,\varphi$.
\end{proof}

\begin{remark}
As with existential inclusion and exclusion quantifiers, we allow the variables in $\tuple x$ to be in $\vr(\tuple{\vphantom\wedge\smash{t}}\,)$. In particular, we allow universal quantifiers of the form $(\Ae \tuple x\subseteq\tuple x)$. This strange looking quantifier turns out be a rather useful operator in an another context which is studied by the author in \cite{Ronnholm16}.
\end{remark}

A natural idea for the truth definition for universal inclusion quantification $(\Ae\tuple x\subseteq\tuple{y}\,)$ is ``$\Ae\tuple x\in M^k:(\tuple x\subseteq\tuple{y}\,\Rightarrow\,\varphi)$''. This intuition would give us the following definition: $(\Ae\tuple x\subseteq\tuple{y}\,)\,\varphi \,:=\; \Ae\tuple x\,(\tuple x\mid\tuple{y}\;\vee\,\varphi)$.
However, this simple idea does not work for two reasons. Firstly, there might be too many values chosen for $\tuple x$ on the right side of the disjunction, which can be a problem since INEX is not closed downwards. Secondly, the exclusion atom is evaluated \emph{after} splitting the team and thus some of the original values for $\tuple y$ might be lost. This general problem regarding the ``loss of information'' when evaluating disjunctions will be discussed more in the Subsection~\ref{ssec: Term value preserving disjunction}, where we define term value preserving disjunction.

\subsection{Analyzing the properties of inclusion and exclusion quantifiers}\label{ssec: Properties of quantifiers}

In the previous subsections we showed that inclusion and exclusion quantifiers can be expressed with inclusion and exclusion atoms, and thus we were able to define them as abbreviations in INEX. In this subsection we take a reverse perspective by considering them as basic operations to be added to FO and examining the expressive power of the resulting logics.
The following observation shows that we can define inclusion and exclusion atoms with existential inclusion and exclusion quantifiers $(\Ee\tuple x\inc\tuple{\vphantom\wedge\smash{t}}\,)$ and $(\Ee\tuple x\exc\tuple{\vphantom\wedge\smash{t}}\,)$.
\begin{observation}\label{obs: Defining atoms 1}
Let $\tuple{\vphantom\wedge\smash{t}}_1,\tuple{\vphantom\wedge\smash{t}}_2$ be $k$-tuples of $L$-terms and let $\tuple x$ be a $k$-tuple of fresh variables. Now it holds that:
\begin{align*}
	\mathcal{M}\true_X \tuple{\vphantom\wedge\smash{t}}_1\subseteq\tuple{\vphantom\wedge\smash{t}}_2
	&\;\;\text{ iff  }\; \mathcal{M}\true_X (\Ee \tuple x\subseteq\tuple{\vphantom\wedge\smash{t}}_2)(\tuple x = \tuple{\vphantom\wedge\smash{t}}_1). \\
	\mathcal{M}\true_X \tuple{\vphantom\wedge\smash{t}}_1\mid\tuple{\vphantom\wedge\smash{t}}_2
	&\;\;\text{ iff  }\; \mathcal{M}\true_X (\Ee \tuple x\mid\tuple{\vphantom\wedge\smash{t}}_2)(\tuple x = \tuple{\vphantom\wedge\smash{t}}_1),
\end{align*}
We explain briefly why these equivalences hold. We first notice that for any function $\mathcal{F}:X\rightarrow\mathcal{P}^*(M^k)$ the following holds:
\[
	\mathcal{M}\true_{X[\mathcal{F}/\tuple x\,]}\tuple x=\tuple{\vphantom\wedge\smash{t}}_1
	\;\text{ iff }\; \mathcal{F}(s)=\{s(\tuple{\vphantom\wedge\smash{t}}_1)\} \text{ for each } s\in X. \tag{$\star$}
\]
It is easy to see that if $\mathcal{F}$ is a function which satisfies the (both) sides of $(\star)$, then we have: $\ran(\mathcal{F})\subseteq\mathcal{P}^*(X(\tuple{\vphantom\wedge\smash{t}}_2))$ if and only if $\mathcal{M}\true_X \tuple{\vphantom\wedge\smash{t}}_1\inc\tuple{\vphantom\wedge\smash{t}}_2$. The first equivalence follows from this. The second one is also clear since $\ran(\mathcal{F})\subseteq\mathcal{P}^*(\overline{X(\tuple{\vphantom\wedge\smash{t}}_2)})$ iff $\mathcal{M}\true_X \tuple{\vphantom\wedge\smash{t}}_1\exc\tuple{\vphantom\wedge\smash{t}}_2$, for any $\mathcal{F}$ which satisfies the both sides of $(\star)$.
\end{observation}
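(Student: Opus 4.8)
The plan is to reduce both equivalences to Proposition~\ref{the: Existential quantification}, which already translates the quantifiers $(\Ee\tuple x\inc\tuple{t}_2)$ and $(\Ee\tuple x\exc\tuple{t}_2)$ into statements about choice functions. First I would unpack the right-hand side of the first equivalence: by Proposition~\ref{the: Existential quantification}(a), $\mathcal{M}\true_X(\Ee\tuple x\inc\tuple{t}_2)(\tuple x=\tuple{t}_1)$ holds iff there is some $\mathcal{F}\colon X\to\mathcal{P}^*(X(\tuple{t}_2))$ with $\mathcal{M}\true_{X[\mathcal{F}/\tuple x\,]}\tuple x=\tuple{t}_1$. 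The key auxiliary fact is that, since $\tuple x$ is a tuple of fresh variables (so $\tuple x$ does not occur in $\tuple{t}_1$), the condition $\mathcal{M}\true_{X[\mathcal{F}/\tuple x\,]}\tuple x=\tuple{t}_1$ is equivalent to demanding $\mathcal{F}(s)=\{s(\tuple{t}_1)\}$ for every $s\in X$: every $r\in X[\mathcal{F}/\tuple x\,]$ has the form $s[\tuple a/\tuple x\,]$ with $s\in X$ and $\tuple a\in\mathcal{F}(s)$, and then $r(\tuple x)=\tuple a$ while $r(\tuple{t}_1)=s(\tuple{t}_1)$; requiring these to agree for all such $r$ pins $\mathcal{F}(s)$ down to the singleton $\{s(\tuple{t}_1)\}$ (using $\mathcal{F}(s)\neq\emptyset$).

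Once this auxiliary fact is in hand, the rest is bookkeeping. There is at most one function $\mathcal{F}$ with $\mathcal{F}(s)=\{s(\tuple{t}_1)\}$ for all $s\in X$, and it takes values in $\mathcal{P}^*(X(\tuple{t}_2))$ iff $\{s(\tuple{t}_1)\}\subseteq X(\tuple{t}_2)$ for every $s\in X$, i.e. iff $X(\tuple{t}_1)\subseteq X(\tuple{t}_2)$. By Definition~\ref{def: Inclusion atom} this is precisely $\mathcal{M}\true_X\tuple{t}_1\inc\tuple{t}_2$, giving the first equivalence. For the second I would argue identically, now using Proposition~\ref{the: Existential quantification}(b): $\mathcal{M}\true_X(\Ee\tuple x\exc\tuple{t}_2)(\tuple x=\tuple{t}_1)$ holds iff the (again unique) function $\mathcal{F}$ with $\mathcal{F}(s)=\{s(\tuple{t}_1)\}$ takes values in $\mathcal{P}^*(\overline{X(\tuple{t}_2)})$, i.e. iff $s(\tuple{t}_1)\notin X(\tuple{t}_2)$ for all $s\in X$, i.e. iff $X(\tuple{t}_1)\cap X(\tuple{t}_2)=\emptyset$, which by Definition~\ref{def: Exclusion atom} is $\mathcal{M}\true_X\tuple{t}_1\exc\tuple{t}_2$.

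I would also dispatch the degenerate case $X=\emptyset$ separately, or simply note that it is covered by the empty team property of INEX, since then both sides of each equivalence hold trivially and no choice function must be exhibited. I do not expect any real obstacle here; the only point requiring a little care is the placement of the freshness hypothesis on $\tuple x$, which is needed both so that $s(\tuple{t}_1)$ is unaffected by the substitution $[\tuple a/\tuple x\,]$ in the argument for the auxiliary fact, and so that $X(\tuple{t}_2)$ (respectively $\overline{X(\tuple{t}_2)}$) is computed in the original team $X$ --- exactly as Proposition~\ref{the: Existential quantification} guarantees --- rather than in $X[\mathcal{F}/\tuple x\,]$. No induction or team-splitting is involved; the statement is a direct unfolding of the already-proven truth conditions for the existential inclusion and exclusion quantifiers.
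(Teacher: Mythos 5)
Your proposal is correct and follows essentially the same route as the paper: you invoke Proposition~\ref{the: Existential quantification} to reduce the quantified formulas to statements about choice functions, establish the same key fact $(\star)$ that $\mathcal{M}\true_{X[\mathcal{F}/\tuple x\,]}\tuple x=\tuple{t}_1$ forces $\mathcal{F}(s)=\{s(\tuple{t}_1)\}$ (using freshness of $\tuple x$), and then translate the range condition on this unique $\mathcal{F}$ into $X(\tuple{t}_1)\subseteq X(\tuple{t}_2)$ resp. $X(\tuple{t}_1)\cap X(\tuple{t}_2)=\emptyset$. The only difference is that you fill in the details the paper leaves as ``easy to see'' and treat the empty-team case explicitly, which is harmless.
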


Recall that, in Definition~\ref{def: Existential quantification}, we were able to define the quantifier $(\Ee\tuple x\inc\tuple{\vphantom\wedge\smash{t}}\,)$ with inclusion atom and the quantifier $(\Ee\tuple x\exc\tuple{\vphantom\wedge\smash{t}}\,)$ with exclusion atom. Hence, by the previous observation, if we extend FO with quantifiers $(\Ee\tuple x\inc\tuple{\vphantom\wedge\smash{t}}\,)$ or $(\Ee\tuple x\exc\tuple{\vphantom\wedge\smash{t}}\,)$, we obtain equivalent logics with $\INC$ and $\EXC$, respectively. We call these logics \emph{inclusion and exclusion friendly logics} due their similarity with IF-logic. By using the both of these quantifiers, we obtain \emph{inclusion-exclusion friendly logic} that is equivalent with $\INEX$. 

Also note that the arities of these operations match, since the use of  existential inclusion (exclusion) quantifiers for $k$-tuples corresponds to the use of $k$-ary inclusion (exclusion) atoms. Hence the use of existential  inclusion and exclusion quantifiers for \emph{single}
%
%
first order variables corresponds to the use of \emph{unary} inclusion and exclusion atoms, and thus, by extending FO with either/both of them, we obtain logics equivalent to $\INC[1]$, $\EXC[1]$ and $\INEX[1]$.

After the Observation \ref{obs: Defining atoms 1} it is natural to ask whether we can define inclusion and exclusion atoms alternatively by using universal inclusion and exclusion quantifiers $(\Ae\tuple x\inc\tuple{\vphantom\wedge\smash{t}}\,)$ and $(\Ae\tuple x\exc\tuple{\vphantom\wedge\smash{t}}\,)$. This can also be done, however, this time inclusion atom is defined with universal \emph{exclusion} quantifier and exclusion atom is defined with universal \emph{inclusion} quantifier.
\begin{observation}\label{obs: Defining atoms 2}
Let $\tuple{\vphantom\wedge\smash{t}}_1,\tuple{\vphantom\wedge\smash{t}}_2$ be $k$-tuples of $L$-terms and let $\tuple x$ be a $k$-tuple of fresh variables. Now the following equivalences hold:
\begin{align*}
	\mathcal{M}\true_X\tuple{\vphantom\wedge\smash{t}}_1\inc\tuple{\vphantom\wedge\smash{t}}_2 \;&\text{ iff }\;
	\mathcal{M}\true_X (\Ae\tuple x\exc\tuple{\vphantom\wedge\smash{t}}_2)(\tuple x\neq\tuple{\vphantom\wedge\smash{t}}_1) \\
	\mathcal{M}\true_X\tuple{\vphantom\wedge\smash{t}}_1\exc\tuple{\vphantom\wedge\smash{t}}_2 \;&\text{ iff }\;
	\mathcal{M}\true_X (\Ae\tuple x\inc\tuple{\vphantom\wedge\smash{t}}_2)(\tuple x\neq\tuple{\vphantom\wedge\smash{t}}_1),
\end{align*}
We prove the first equivalence by contraposition:
Suppose that $\mathcal{M}\ntrue_X\tuple{\vphantom\wedge\smash{t}}_1\inc\tuple{\vphantom\wedge\smash{t}}_2$, i.e. there is $s\in X$ such that $s(\tuple{\vphantom\wedge\smash{t}}_1)\notin X(\tuple{\vphantom\wedge\smash{t}}_2)$. Let $r:=s[s(\tuple{\vphantom\wedge\smash{t}}_1)/\tuple x\,]$, whence $r\in X[\overline{X(\tuple{\vphantom\wedge\smash{t}}_2)}/\tuple x\,]$. Now $r(\tuple x)=s(\tuple{\vphantom\wedge\smash{t}}_1)=r(\tuple{\vphantom\wedge\smash{t}}_1)$ and thus $\mathcal{M}\ntrue_X (\Ae\tuple x\exc\tuple{\vphantom\wedge\smash{t}}_2)(\tuple x\neq\tuple{\vphantom\wedge\smash{t}}_1)$.

For the other direction suppose that $\mathcal{M}\ntrue_X (\Ae\tuple x\exc\tuple{\vphantom\wedge\smash{t}}_2)(\tuple x\neq\tuple{\vphantom\wedge\smash{t}}_1)$, whence there is $r\in X[\overline{X(\tuple{\vphantom\wedge\smash{t}}_2)}/\tuple x\,]$ such that $r(\tuple x)=r(\tuple{\vphantom\wedge\smash{t}}_1)$. Now there is $s\in X$ and $\tuple a\in\overline{X(\tuple{\vphantom\wedge\smash{t}}_2)}$ such that $r=s[\tuple a/\tuple x\,]$. But since $s(\tuple{\vphantom\wedge\smash{t}}_1)=r(\tuple{\vphantom\wedge\smash{t}}_1)=r(\tuple x)=\tuple a\notin X(\tuple{\vphantom\wedge\smash{t}}_2)$, we have $\mathcal{M}\ntrue_X\tuple{\vphantom\wedge\smash{t}}_1\inc\tuple{\vphantom\wedge\smash{t}}_2$. 
The second equivalence can be proven by a similar reasoning.
\end{observation}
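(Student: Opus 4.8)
The plan is to reduce both equivalences to the truth conditions for the universal inclusion and exclusion quantifiers established in Proposition~\ref{the: Universal quantification}, after which only an elementary unfolding of the semantics of $\tuple x\neq\tuple{\vphantom\wedge\smash{t}}_1$ remains. Throughout I would use that $\tuple x$ is a tuple of fresh variables, so that for a point $r=s[\tuple a/\tuple x\,]$ of a team of the form $X[A/\tuple x\,]$ the value $r(\tuple{\vphantom\wedge\smash{t}}_1)$ equals $s(\tuple{\vphantom\wedge\smash{t}}_1)$ and is in particular independent of the chosen $\tuple a$.

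For the first equivalence, apply Proposition~\ref{the: Universal quantification}~b) to rewrite $\mathcal{M}\true_X(\Ae\tuple x\exc\tuple{\vphantom\wedge\smash{t}}_2)(\tuple x\neq\tuple{\vphantom\wedge\smash{t}}_1)$ as $\mathcal{M}\true_{X[A/\tuple x\,]}\tuple x\neq\tuple{\vphantom\wedge\smash{t}}_1$ with $A=\overline{X(\tuple{\vphantom\wedge\smash{t}}_2)}$. By the semantics of $\tuple x\neq\tuple{\vphantom\wedge\smash{t}}_1$ this says that $r(\tuple x)\neq r(\tuple{\vphantom\wedge\smash{t}}_1)$ for every $r\in X[A/\tuple x\,]$; by the freshness remark this is equivalent to $\tuple a\neq s(\tuple{\vphantom\wedge\smash{t}}_1)$ holding for all $s\in X$ and all $\tuple a\in\overline{X(\tuple{\vphantom\wedge\smash{t}}_2)}$, i.e. to $s(\tuple{\vphantom\wedge\smash{t}}_1)\in X(\tuple{\vphantom\wedge\smash{t}}_2)$ for all $s\in X$. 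This is precisely $X(\tuple{\vphantom\wedge\smash{t}}_1)\subseteq X(\tuple{\vphantom\wedge\smash{t}}_2)$, i.e. $\mathcal{M}\true_X\tuple{\vphantom\wedge\smash{t}}_1\inc\tuple{\vphantom\wedge\smash{t}}_2$ by Definition~\ref{def: Inclusion atom}.

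The second equivalence is handled symmetrically: Proposition~\ref{the: Universal quantification}~a) turns $\mathcal{M}\true_X(\Ae\tuple x\inc\tuple{\vphantom\wedge\smash{t}}_2)(\tuple x\neq\tuple{\vphantom\wedge\smash{t}}_1)$ into $\mathcal{M}\true_{X[A/\tuple x\,]}\tuple x\neq\tuple{\vphantom\wedge\smash{t}}_1$ with $A=X(\tuple{\vphantom\wedge\smash{t}}_2)$, and the same unfolding yields $s(\tuple{\vphantom\wedge\smash{t}}_1)\notin X(\tuple{\vphantom\wedge\smash{t}}_2)$ for all $s\in X$, i.e. $X(\tuple{\vphantom\wedge\smash{t}}_1)\cap X(\tuple{\vphantom\wedge\smash{t}}_2)=\emptyset$, which by Definition~\ref{def: Exclusion atom} is $\mathcal{M}\true_X\tuple{\vphantom\wedge\smash{t}}_1\exc\tuple{\vphantom\wedge\smash{t}}_2$. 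One could equally argue by contraposition, exhibiting the offending point $r=s[s(\tuple{\vphantom\wedge\smash{t}}_1)/\tuple x\,]$ in the quantified team; this is the same computation read in reverse.

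I do not expect a genuine obstacle here once Proposition~\ref{the: Universal quantification} is in hand; the only points needing care are the bookkeeping around the freshness of $\tuple x$ and the fact that the quantified right-hand side has a \emph{deterministic} truth condition — the team is replaced wholesale by $X[A/\tuple x\,]$, not via a split or a choice function. This determinism is exactly what forces the dual phenomenon that distinguishes this observation from Observation~\ref{obs: Defining atoms 1}: the universal exclusion quantifier defines the inclusion atom and the universal inclusion quantifier defines the exclusion atom.
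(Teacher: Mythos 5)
Your proposal is correct and follows essentially the same route as the paper: both reduce the quantified formula, via the truth conditions of Proposition~\ref{the: Universal quantification}, to the truth of $\tuple x\neq\tuple{\vphantom\wedge\smash{t}}_1$ in the team $X[A/\tuple x\,]$ (with $A=\overline{X(\tuple{\vphantom\wedge\smash{t}}_2)}$ resp.\ $A=X(\tuple{\vphantom\wedge\smash{t}}_2)$) and then unfold its flat semantics using the freshness of $\tuple x$. The paper merely phrases this as a contrapositive argument with explicit witnessing assignments, which, as you note yourself, is the same computation read in reverse.
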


When we combine the equivalences above with the respective equivalences in Observation~\ref{obs: Defining atoms 1}, we obtain the following correspondence.
\begin{align*}
	(\Ee\tuple x\inc\tuple{\vphantom\wedge\smash{t}}_2)(\tuple x=\tuple{\vphantom\wedge\smash{t}}_1) 
	&\;\equiv\; (\Ae\tuple x\exc\tuple{\vphantom\wedge\smash{t}}_2)(\tuple x\neq\tuple{\vphantom\wedge\smash{t}}_1) \\
	(\Ee\tuple x\exc\tuple{\vphantom\wedge\smash{t}}_2)(\tuple x=\tuple{\vphantom\wedge\smash{t}}_1) 
	&\;\equiv\; (\Ae\tuple x\inc\tuple{\vphantom\wedge\smash{t}}_2)(\tuple x\neq\tuple{\vphantom\wedge\smash{t}}_1).
\end{align*}
Here we have an interesting duality between the inclusion and exclusion quantifiers. This leads to a natural question whether existential inclusion quantifier $(\Ee\tuple x\inc\tuple{\vphantom\wedge\smash{t}}\,)$ has the same expressive power as universal exclusion quantifier $(\Ae\tuple x\exc\tuple{\vphantom\wedge\smash{t}}\,)$, and the whether the same holds for the quantifiers $(\Ee\tuple x\exc\tuple{\vphantom\wedge\smash{t}}\,)$ and $(\Ae\tuple x\inc\tuple{\vphantom\wedge\smash{t}}\,)$. We approach this question by first comparing universal inclusion/exclusion quantifiers with INC and EXC.

In Definition~\ref{def: Universal quantification} we defined universal inclusion and exclusion quantifiers in INEX by using both inclusion and exclusion atoms. We examine next whether either of them could be defined by using only one type of these atoms. For the next observation, recall that EXC is closed downwards and INC under unions.

\begin{observation}\label{obs: Properties of quantifiers}
Let $\mathcal{M}=(\mathcal{I},M)$ be an $L$-model s.t. $M=\{0,1,2\}$, and let $X_1=\{s_{01}\}$ and $X_2=\{s_{10}\}$, where $s_{01}(x)=0=s_{10}(y)$ and $s_{01}(y)=1=s_{10}(x)$. 

\medskip

\noindent
(A) We first show that universal inclusion quantifier is not closed under unions.
For this, let $\varphi:=(\Ae z\inc x)(y\neq z)$. We consider the following teams
\begin{align*}
	&Y_1:=X_1[X_1(x)/z]=X_1[\{0\}/z]=\{s_{01}[0/z]\} \\
	&Y_2:=X_2[X_2(x)/z]=X_2[\{1\}/z]=\{s_{10}[1/z]\} \\
	&Y_3:=(X_1\!\cup\!X_2)\bigl[(X_1\!\cup\!X_2)(x)/z\bigr]=(X_1\!\cup\!X_2)[\{0,1\}/z] \\
	&\hspace{5,72cm}=\{s_{01}[0/z],s_{01}[1/z],s_{10}[0/z],s_{10}[1/z]\}.
\end{align*}
Now we have $\mathcal{M}\true_{Y_1}y\neq z$ and $\mathcal{M}\true_{Y_2}y\neq z$, but $\mathcal{M}\ntrue_{Y_3}y\neq z$. Hence $\mathcal{M}\true_{X_1}\varphi$ and $\mathcal{M}\true_{X_2}\varphi$, but $\mathcal{M}\ntrue_{X_1\cup X_2}\varphi$.

\medskip

\noindent
(B) Next, we show that universal exclusion quantifier is not closed under unions. Let $\psi:=(\Ae z\exc x)(y\inc z)$. Note that, by Observation~\ref{obs: Defining atoms 2}, $y\inc z$ can be expressed with universal exclusion quantifier ($\psi\equiv(\Ae z\exc x)(\Ae w\exc z)(w\!\neq\!y)$). Let
\begin{align*}
	&Z_1:=X_1\bigl[\overline{X_1(x)}/z\bigr]=X_1\bigl[\overline{\{0\}}/z\bigr]=X_1[\{1,2\}/z]=\{s_{01}[1/z],s_{01}[2/z]\} \\
	&Z_2:=X_2\bigl[\overline{X_2(x)}/z\bigr]=X_2\bigl[\overline{\{1\}}/z\bigr]=X_2[\{0,2\}/x]=\{s_{10}[0/z],s_{10}[2/z]\} \\
	&Z_3:=(X_1\!\cup\!X_2)\bigl[\overline{(X_1\!\cup\!X_2)(x)}/z\bigr]=(X_1\!\cup\!X_2)\bigl[\overline{\{0,1\}}/z\bigr] \\
	&\hspace{5,75cm}=(X_1\!\cup\!X_2)[\{2\}/z]=\{s_{01}[2/z],s_{10}[2/z]\}.
\end{align*}
Since $Z_1(y)=\{1\}\subseteq\{1,2\}=Z_1(z)$ and $Z_2(y)=\{0\}\subseteq\{0,2\}=Z_2(z)$, we have $\mathcal{M}\true_{Z_1}y\inc z$ and $\mathcal{M}\true_{Z_2}y\inc z$. But because $Z_3(y)=\{0,1\}\not\subseteq\{2\}=Z_3(z)$, we have $\mathcal{M}\ntrue_{Z_3}y\inc z$. Hence $\mathcal{M}\true_{X_1}\psi$ and $\mathcal{M}\true_{X_2}\psi$, but $\mathcal{M}\ntrue_{X_1\cup X_2}\psi$.

\medskip

\noindent
(C) Finally, we show that universal exclusion quantifier is not closed downwards either. Let $\theta:=(\Ae z\exc x)(y\neq z)$ and let $Z_1,Z_3$ be as above. Now $\mathcal{M}\true_{Z_3}y\neq z$, but $\mathcal{M}\ntrue_{Z_1}y\neq z$. Hence $\mathcal{M}\true_{X_1\cup X_2}\theta$, but $\mathcal{M}\ntrue_{X_1}\theta$; even though $X_1\subseteq X_1\!\cup\!X_2$.
\end{observation}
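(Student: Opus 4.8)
The plan is to refute all three closure properties by means of a single concrete counterexample: a fixed model $\mathcal{M}$ with three-element universe $M=\{0,1,2\}$ together with two singleton teams $X_1=\{s_{01}\}$ and $X_2=\{s_{10}\}$ whose assignments give $x$ and $y$ ``crossed'' values, say $s_{01}(x)=0$, $s_{01}(y)=1$, $s_{10}(x)=1$, $s_{10}(y)=0$. The structural fact that makes everything work is how the \emph{quantified range} of these operators moves when teams are combined: by Proposition~\ref{the: Universal quantification}, $(\Ae z\inc x)$ sends the team $X$ to $X[X(x)/z]$, and since $(X_1\cup X_2)(x)=X_1(x)\cup X_2(x)$ this range \emph{grows} under unions; dually, $(\Ae z\exc x)$ sends $X$ to $X[\overline{X(x)}/z]$, so its range $\overline{X(x)}$ \emph{shrinks} under unions and \emph{grows} when we pass to a subteam. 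For each part I then only need an inner formula that is destroyed by the relevant change of range, after which the verification is just Proposition~\ref{the: Universal quantification} together with the atom and literal clauses of Definitions~\ref{def: Team semantics} and~\ref{def: Inclusion atom}.

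For (A) I would take $\varphi:=(\Ae z\inc x)(y\neq z)$; since $X_1(x)=\{0\}$, $X_2(x)=\{1\}$ and $(X_1\cup X_2)(x)=\{0,1\}$, Proposition~\ref{the: Universal quantification}(a) reduces truth of $\varphi$ to checking $y\neq z$ in $X_1[\{0\}/z]$, in $X_2[\{1\}/z]$, and in $(X_1\cup X_2)[\{0,1\}/z]$: in the first two the unique value of $z$ differs from that of $y$, whereas the last contains an assignment with $y=z=1$, so $\varphi$ holds in $X_1$ and in $X_2$ but not in $X_1\cup X_2$. For (B) I would take $\psi:=(\Ae z\exc x)(y\inc z)$, first noting via Observation~\ref{obs: Defining atoms 2} that $y\inc z$ is itself expressible by a universal exclusion quantifier, so $\psi$ lies in FO extended \emph{only} by universal exclusion quantifiers, which is exactly the point of (B). Here Proposition~\ref{the: Universal quantification}(b) reduces truth of $\psi$ to checking the inclusion atom $y\inc z$ in $X_1[\overline{X_1(x)}/z]=X_1[\{1,2\}/z]$, in $X_2[\overline{X_2(x)}/z]=X_2[\{0,2\}/z]$, and in $(X_1\cup X_2)[\overline{\{0,1\}}/z]=(X_1\cup X_2)[\{2\}/z]$: the set of $y$-values is included in the set of $z$-values in the first two, but the union-team has $y$-values $\{0,1\}\not\subseteq\{2\}$. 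For (C) I would reuse $\theta:=(\Ae z\exc x)(y\neq z)$ together with the teams already computed: in $(X_1\cup X_2)[\{2\}/z]$ the value $2$ of $z$ differs from both $y$-values, so $\theta$ holds in $X_1\cup X_2$, while in the subteam-derived $X_1[\{1,2\}/z]$ the assignment with $z=1=y$ destroys it, so $\theta$ fails in $X_1\subseteq X_1\cup X_2$.

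The argument is essentially mechanical once the example is pinned down, so the only genuine obstacle is making the right choices of model and teams. Two points need care: first, the universe must have at least three elements, for if $\abs{M}=2$ then $\overline{(X_1\cup X_2)(x)}$ would be empty and the exclusion quantifier vacuously true, collapsing both (B) and (C); second, it is worth arranging that the \emph{same} triple $(\mathcal{M},X_1,X_2)$ simultaneously witnesses all three failures, which keeps the observation sharp and lets the teams from (B) be recycled in (C). One should also double-check, when unfolding $y\inc z$ in $\psi$ via Observation~\ref{obs: Defining atoms 2}, that no inclusion atom or inclusion quantifier reappears, since otherwise (B) would not isolate the behaviour of the \emph{exclusion} quantifier as intended.
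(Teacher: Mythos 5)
Your proposal is correct and takes essentially the same route as the paper: the same model with universe $\{0,1,2\}$, the same crossed singleton teams $X_1,X_2$, and the same three formulas $(\Ae z\inc x)(y\neq z)$, $(\Ae z\exc x)(y\inc z)$ and $(\Ae z\exc x)(y\neq z)$, verified by the same computations of the ranges $X(x)$ and $\overline{X(x)}$ via Proposition~\ref{the: Universal quantification}. (Only your side remark is slightly off: a two-element universe would spoil (B) but not (C), since $\theta$ would still hold vacuously in $X_1\cup X_2$ while failing in $X_1$; this does not affect the proof as written.)
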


By this observation, universal exclusion quantifier cannot be defined in EXC and \emph{neither} universal inclusion nor exclusion quantifier can be defined in INC. But there is still a possibility that universal inclusion quantifier could be defined in EXC. It turns out that this can indeed be done, but we must give its definition in a form that would not work properly in INEX. To make distinction with the earlier definition, we denote this quantifier $(\Ae\tuple x\ince\tuple{\vphantom\wedge\smash{t}}\,)$, where ``$e$'' stands for ``exclusion'', as this operator is defined for exclusion logic only.

\begin{definition}\label{def: Universal inclusion quantifier for EXC}
Let $\varphi\in\EXCset$, $\tuple{\vphantom\wedge\smash{t}}\in\Tset$ a $k$-tuple, $\tuple x$ a $k$-tuple of variables and $\tuple u,\tuple y$ \,$k$-tuples of fresh variables. We use the following notation:
\[
	(\Ae\tuple x\ince\tuple{\vphantom\wedge\smash{t}}\,)\,\varphi \,:=\; 
	\;\Ae\tuple x\,\varphi\,\sqcup\,
	\rec{\tuple{\vphantom\wedge\smash{t}}}{\tuple u}\Ae\tuple x\,(\Ee\tuple y\mid\!\tuple u)(\tuple y\!=\!\tuple x\,\vee\varphi).
\]
Since intuitionistic disjunction can be defined with unary exclusion atoms we have $(\Ae\tuple x\ince\tuple{\vphantom\wedge\smash{t}}\,)\,\varphi\in\EXCset[k]$ when $\varphi\in\EXCset[k]$ (for any $k\geq 1$).
\end{definition}

\begin{proposition}\label{the: Universal inclusion quantifier for EXC}
With the same assumptions as in Definition \ref{def: Universal inclusion quantifier for EXC}, we obtain the following truth condition: \;
$\mathcal{M}\true_X (\Ae\tuple x\ince\tuple{\vphantom\wedge\smash{t}}\,)\,\varphi \;\text{ iff }\, \mathcal{M}\true_{X[X(\tuple{\vphantom\wedge\smash{t}}\,)/\tuple x\,]}\varphi$.
\end{proposition}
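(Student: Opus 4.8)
The plan is to peel the abbreviation apart one layer at a time, reducing everything to Proposition~\ref{the: Intuitionistic disjunction}, Lemma~\ref{the: Storing operator} and Proposition~\ref{the: Existential quantification}(b), and then to exploit the fact that $\EXC$ is closed downwards. By Proposition~\ref{the: Intuitionistic disjunction}, $\mathcal{M}\true_X(\Ae\tuple x\ince\tuple t\,)\varphi$ holds iff $\mathcal{M}\true_X\Ae\tuple x\,\varphi$ (the \emph{left disjunct}) or $\mathcal{M}\true_X\rec{\tuple t}{\tuple u}\Ae\tuple x\,(\Ee\tuple y\mid\tuple u)(\tuple y=\tuple x\vee\varphi)$ (the \emph{right disjunct}). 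If $X=\emptyset$ both sides hold trivially by empty team property, so I assume $X\neq\emptyset$. Putting $X':=\{s[s(\tuple t\,)/\tuple u\,]\mid s\in X\}$, Lemma~\ref{the: Storing operator} gives $X'(\tuple u\,)=X(\tuple t\,)$ and turns the right disjunct into $\mathcal{M}\true_{X'}\Ae\tuple x\,(\Ee\tuple y\mid\tuple u)(\tuple y=\tuple x\vee\varphi)$. Writing $X_1:=X'[M^k/\tuple x\,]$ and applying Propositions~\ref{the: Quantification of vectors}(b) and \ref{the: Existential quantification}(b), the right disjunct is equivalent to the existence of $\mathcal{F}:X_1\rightarrow\mathcal{P}^*\bigl(\overline{X(\tuple t\,)}\bigr)$ together with a splitting $X_1[\mathcal{F}/\tuple y\,]=Y\cup Y'$ such that $\mathcal{M}\true_Y\tuple y=\tuple x$ and $\mathcal{M}\true_{Y'}\varphi$. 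By locality I may assume throughout that the fresh tuples $\tuple u,\tuple y$ lie outside $\dom(X)$.

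The core of the proof is the claim that, \emph{when $X(\tuple t\,)\neq M^k$}, the right disjunct is equivalent to $\mathcal{M}\true_{X[X(\tuple t\,)/\tuple x\,]}\varphi$. For the forward direction, note that every $r\in X_1[\mathcal{F}/\tuple y\,]$ with $r(\tuple x\,)\in X(\tuple t\,)$ automatically has $r(\tuple y\,)\in\overline{X(\tuple t\,)}$, hence $r(\tuple x\,)\neq r(\tuple y\,)$, so $r\notin Y$ and therefore $r\in Y'$; reducing back to $\dom(X_1)$, this shows that $X'[X(\tuple t\,)/\tuple x\,]$ is contained in the $\tuple y$-reduct of $Y'$, and then downward closure of $\EXC$ together with locality (discarding $\tuple u$) yields $\mathcal{M}\true_{X[X(\tuple t\,)/\tuple x\,]}\varphi$. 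For the converse, pick $\tuple b^*\in\overline{X(\tuple t\,)}$ --- possible precisely because $X(\tuple t\,)\neq M^k$ --- and define $\mathcal{F}(s):=\{s(\tuple x\,)\}$ if $s(\tuple x\,)\in\overline{X(\tuple t\,)}$ and $\mathcal{F}(s):=\{\tuple b^*\}$ otherwise, so that $\im(\mathcal{F})\subseteq\mathcal{P}^*(\overline{X(\tuple t\,)})$; splitting $X_1[\mathcal{F}/\tuple y\,]$ into $Y:=\{r\mid r(\tuple x\,)=r(\tuple y\,)\}$ and $Y':=\{r\mid r(\tuple x\,)\neq r(\tuple y\,)\}$, one checks that $\mathcal{M}\true_Y\tuple y=\tuple x$ and that $Y'$ is exactly $X'[X(\tuple t\,)/\tuple x\,]$ with $\tuple y$ reset to $\tuple b^*$, whence $\mathcal{M}\true_{Y'}\varphi$ follows from the hypothesis by locality.

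It remains to assemble the cases. If $X(\tuple t\,)=M^k$ then $\overline{X(\tuple t\,)}=\emptyset$, so (for $X\neq\emptyset$) no admissible $\mathcal{F}$ exists and the right disjunct is false; hence the whole formula reduces to the left disjunct $\mathcal{M}\true_{X[M^k/\tuple x\,]}\varphi=\mathcal{M}\true_{X[X(\tuple t\,)/\tuple x\,]}\varphi$, which is the claimed equivalence. If $X(\tuple t\,)\neq M^k$, the right disjunct is already equivalent to the target by the claim, while the left disjunct $\mathcal{M}\true_{X[M^k/\tuple x\,]}\varphi$ implies $\mathcal{M}\true_{X[X(\tuple t\,)/\tuple x\,]}\varphi$ by downward closure of $\EXC$, and conversely the target implies the right disjunct; so in all cases $\mathcal{M}\true_X(\Ae\tuple x\ince\tuple t\,)\varphi$ iff $\mathcal{M}\true_{X[X(\tuple t\,)/\tuple x\,]}\varphi$. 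The step I expect to be the main obstacle --- and exactly the reason this definition works in $\EXC$ but, as noted in the text, not in $\INEX$ --- is the forward direction of the claim: one must argue that precisely the assignments whose $\tuple x$-value lies in $X(\tuple t\,)$ are forced onto the $\varphi$-side $Y'$ of the split, and then invoke downward closure of $\EXC$ (unavailable for $\INEX$) to transfer truth of $\varphi$ from $Y'$ down to $X[X(\tuple t\,)/\tuple x\,]$.
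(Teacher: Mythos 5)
Your proof is correct and follows essentially the same route as the paper's: unfold the intuitionistic disjunction and the storing operator, reduce the exclusion quantifier via Proposition~\ref{the: Existential quantification}(b), show that every assignment whose $\tuple x$-value lies in $X(\tuple{\vphantom\wedge\smash{t}}\,)$ is forced onto the $\varphi$-side of the split (using that $\mathcal{F}$ only picks values outside $X(\tuple{\vphantom\wedge\smash{t}}\,)$), and for the converse build exactly the same $\mathcal{F}$ with a default value $\tuple b^*\notin X(\tuple{\vphantom\wedge\smash{t}}\,)$, finishing with downward closure of $\EXC$ and locality. The only differences are cosmetic: you make the case split on $X(\tuple{\vphantom\wedge\smash{t}}\,)=M^k$ explicit up front (observing the right disjunct is then unsatisfiable for nonempty $X$) and obtain $Y'$ as exactly the target team rather than via an inclusion plus downward closure, which is a harmless repackaging of the paper's argument.
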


\begin{proof}
By locality we may assume for this proof that the variables in $\tuple y$ are not in $\dom(X)$. We write $V^*:=\,\dom(X)\cup\vr(\tuple u\tuple x)$.

\medskip

\noindent
Suppose that $\mathcal{M}\true_X(\Ae\tuple x\ince\tuple{\vphantom\wedge\smash{t}}\,)\,\varphi$, i.e. $\mathcal{M}\true_X\Ae\tuple x\,\varphi$ or 
\vspace{-0,1cm}\[
	\mathcal{M}\true_X\rec{\tuple{\vphantom\wedge\smash{t}}}{\tuple u}\Ae\tuple x
	\,(\Ee\tuple y\exc\tuple u)(\tuple y=\tuple x\,\vee\varphi). \tag{$\star$}
\vspace{-0,1cm}\]
Suppose first that $\mathcal{M}\true_X\Ae\tuple x\,\varphi$, i.e. $\mathcal{M}\true_{X[M^k/\tuple x\,]}\varphi$. Since $X(\tuple{\vphantom\wedge\smash{t}}\,)\subseteq M^k$, also $X[X(\tuple{\vphantom\wedge\smash{t}}\,)/\tuple x\,]\subseteq X[M^k/\tuple x\,]$. Thus $\mathcal{M}\true_{X[X(\tuple{\vphantom\wedge\smash{t}}\,)/\tuple x\,]}\varphi$ since EXC is closed downwards.

Suppose then that ($\star$) holds. Now $\mathcal{M}\true_{X'}\Ae\tuple x\,(\Ee\tuple y\exc\tuple u)(\tuple y\!=\!\tuple x\,\vee\varphi)$, where $X'=\{s[s(\tuple{\vphantom\wedge\smash{t}}\,)/\tuple u\,]\mid s\in X\}$. Hence we have $\mathcal{M}\true_{X_1}(\Ee\tuple y\exc\tuple u)(\tuple y\!=\!\tuple x\,\vee\varphi)$, where $X_1=X'[M^k/\tuple x\,]$. Thus there exists a function $\mathcal{F}:X_1\rightarrow\mathcal{P}^*(\overline{X_1(\tuple u)})$ such that $\mathcal{M}\true_{X_2}\tuple y\!=\!\tuple x\,\vee\varphi$, where $X_2=X_1[\mathcal{F}/\tuple y\,]$. Now there are $Y,Y'\subseteq X_2$ such that $Y\cup Y'=X_2$, $\mathcal{M}\true_{Y}\tuple y=\tuple x$ and $\mathcal{M}\true_{Y'}\varphi$.

For the sake of showing that $X'[X'(\tuple u)/\tuple x\,]\subseteq Y'\!\upharpoonright V^*$, let $r\in X'[X'(\tuple u)/\tuple x\,]$. Now there is $s\in X'$ and $\tuple a\in X'(\tuple u\,)$ such that $r=s[\tuple a/\tuple x\,]$. Let $\tuple{\vphantom t\smash{b}}\in\mathcal{F}(r)$ and $q:=r[\tuple{\vphantom t\smash{b}}/\tuple y\,]$, whence $q\in X_2$. Since $\mathcal{F}$ only chooses values in $\overline{X_1(\tuple u)}$ and $\tuple a\in X'(\tuple u)=X_1(\tuple u)$, we must have $q(\tuple y)=\tuple{\vphantom t\smash{b}}\neq\tuple a$. Thus
\vspace{-0,2cm}\[
	q(\tuple x) = r(\tuple x) = s[\tuple a/\tuple x\,](\tuple x) = \tuple a \neq q(\tuple y).
\vspace{-0,2cm}\]
But since $\mathcal{M}\true_{Y}\tuple y=\tuple x$, we must have $q\notin Y$ and therefore $q\in Y'$. Furthermore $r=q\upharpoonright V^*\,\in\,Y'\!\upharpoonright V^*$, and thus $X'[X'(\tuple u)/\tuple x\,]\subseteq Y'\!\upharpoonright V^*$.

Because $\mathcal{M}\true_{Y'}\varphi$, by locality we have $\mathcal{M}\true_{Y'\upharpoonright V^*}\varphi$. Since exclusion logic is closed downwards, we have $\mathcal{M}\true_{X'[X'(\tuple u)/\tuple x\,]}\varphi$. But since $X'(\tuple u)=X(\tuple{\vphantom\wedge\smash{t}}\,)$, it is now easy to see that by locality $\mathcal{M}\true_{X[X(\tuple{\vphantom\wedge\smash{t}}\,)/\tuple x\,]}\varphi$.

\medskip

\noindent
Suppose then that $\mathcal{M}\true_{X[X(\tuple{\vphantom\wedge\smash{t}}\,)/\tuple x\,]}\varphi$. If $X(\tuple{\vphantom\wedge\smash{t}}\,)=M^k$, we have $\mathcal{M}\true_{X[M^k/\tuple x\,]}\varphi$, i.e. $\mathcal{M}\true_{X}\Ae\tuple x\,\varphi$, and therefore $\mathcal{M}\true_X(\Ae\tuple x\ince\tuple{\vphantom\wedge\smash{t}}\,)\,\varphi$.
Hence we may assume that $X(\tuple{\vphantom\wedge\smash{t}}\,)\neq M^k$, whence there exists $\tuple c\notin X(\tuple{\vphantom\wedge\smash{t}}\,)$. Let $X':=\{s[s(\tuple{\vphantom\wedge\smash{t}}\,)/\tuple u\,]\mid s\in X\}$ and $X_1=X'[M^k/\tuple x\,]$. Since $X'(\tuple u)=X(\tuple{\vphantom\wedge\smash{t}}\,)$, we have $\tuple c\notin X'(\tuple u\,)$. Let 
\vspace{-0,2cm}\[
	\mathcal{F}: X_1\rightarrow\mathcal{P}^*(M^k) \,\text{ s.t. }
	\begin{cases}
		s\mapsto \{s(\tuple x)\} \;\, \text{ if } s(\tuple x)\notin X'(\tuple u) \\
		s\mapsto \{\tuple c\,\} \qquad\text{else}. 
	\end{cases}
\vspace{-0,2cm}\]
Let $X_2:=X_1[\mathcal{F}/\tuple y\,]$. Since $X'(\tuple u)=X_1(\tuple u)$, we see that $\im(\mathcal{F})\subseteq\mathcal{P}^*(\overline{X_1(\tuple u)})$. Let $Y:=\{s\in X_2\mid s(\tuple x)\notin X'(\tuple u)\}$ and $Y':=\{s\in X_2\mid s(\tuple x)\in X'(\tuple u)\}$. Now clearly $Y\cup Y'=X_2$ and by the definition of $\mathcal{F}$ we have $\mathcal{M}\true_{Y}\tuple y=\tuple x$.

For the sake of showing that $Y'\!\upharpoonright V^*\subseteq X'[X'(\tuple u)/\tuple x\,]$, let $r^*\in Y'\!\upharpoonright V^*$. Now there exists $r\in Y'$ such that $r^*= r\upharpoonright V^*$. By the definition of $Y'$, we have $r(\tuple x)\in X'(\tuple u)$. Since $r\in X_2=X_1[\mathcal{F}/\tuple y\,]$, there exist $s\in X_1$ and $\tuple{\vphantom t\smash{b}}\in\mathcal{F}(s)$ such that $r=s[\tuple{\vphantom t\smash{b}}/\tuple x\,]$. Because $s\in X_1=X'[M^k/\tuple x\,]$ and $s(\tuple x)=r(\tuple x)\in X'(\tuple u)$, we have $s\in X'[X'(\tuple u)/\tuple x\,]$. But now it must also be that $r^*=s$, and thus we have shown that $Y'\!\upharpoonright V^*\subseteq X'[X'(\tuple u)/\tuple x\,]$. 

Since $X'(\tuple u)=X(\tuple{\vphantom\wedge\smash{t}}\,)$ and by the assumption $\mathcal{M}\true_{X[X(\tuple{\vphantom\wedge\smash{t}}\,)/\tuple x\,]}\varphi$, it easy to see by locality that $\mathcal{M}\true_{X'[X'(\tuple u)/\tuple x\,]}\varphi$. Because exclusion logic is closed downwards, $\mathcal{M}\true_{Y'\upharpoonright V^*}\varphi$, and thus by locality $\mathcal{M}\true_{Y'}\varphi$. Therefore $\mathcal{M}\true_{X_2}\tuple y\!=\!\tuple x\,\vee\varphi$ and furthermore $(\star)$ holds. Hence we have $\mathcal{M}\true_X(\Ae\tuple x\ince\tuple{\vphantom\wedge\smash{t}}\,)\,\varphi$.
\end{proof}

In the proof above we had to use the assumption of downwards closure, and thus this proof is not valid for $\INEXset$-formulas. Furthermore, the claim of Proposition~\ref{the: Universal inclusion quantifier for EXC} is not necessarily true when $\varphi\in\INEXset$ since, for example, if $\varphi:=\Ae x\,(x\inc y)$ and $X(z)\neq M$, then $\mathcal{M}\true_X(\Ae y\ince z)\,\varphi$, but $\mathcal{M}\ntrue_X(\Ae y\inc z)\,\varphi$. 

By the observation above, we see that definability of these quantifiers, as well as many other operators for team semantics, is ``case sensitive''. That is, if a certain operator $O$ is definable in a logic $\mathcal{L}$ and $\mathcal{L}'$ is an extension of $\mathcal{L}$, then the operator $O$ may have to be defined differently in $\mathcal{L'}$. Note that atoms in team semantics are more regular in this sense, since if a certain atom $A$ is definable in a logic $\mathcal{L}$, then $A$ can be defined in all of the extensions of $\mathcal{L}$ identically as it is defined in $\mathcal{L}$.

Since we were able to define universal inclusion quantifier $(\Ae\tuple x\inc\tuple{\vphantom\wedge\smash{t}}\,)$ in EXC, it would have been natural to predict that universal exclusion quantifier $(\Ae\tuple x\exc\tuple{\vphantom\wedge\smash{t}}\,)$ is dually definable in INC. However, this is impossible since this operator is not closed under unions as shown in Observation~\ref{obs: Properties of quantifiers}. Here we have an interesting piece of asymmetry between the inclusion and exclusion operators.

In this subsection we were able to show that \emph{existential} inclusion and exclusion quantifiers are very closely related to inclusion and exclusion atoms. However, perhaps a bit surprisingly, with \emph{universal} inclusion and exclusion quantifiers, this relationship becomes more complicated.
One interesting question, that is still open, is the exact expressive power of universal exclusion quantifier. For now, we only know that when $\tuple x$ and $\tuple{\vphantom\wedge\smash{t}}$ are $k$-ary, then $(\Ae\tuple x\exc\tuple{\vphantom\wedge\smash{t}}\,)$ is (strictly) stronger than $k$-ary inclusion atom. However, it is possible that this difference would disappear on the level of sentences -- that is, FO extended with $(\Ae\tuple x\exc\tuple{\vphantom\wedge\smash{t}}\,)$ (where $\tuple x,\tuple{\vphantom\wedge\smash{t}}$ are $k$-ary) would become equivalent with INC[$k$] when we only consider sentences. We leave this question open for further research.


\subsection{Term value preserving disjunction}\label{ssec: Term value preserving disjunction}


When evaluating disjunctions, the team is split and usually some information is lost about the values of terms in the original team. Often this is desirable, since we want to shrink or distribute the values of certain variables by giving conditions on the disjuncts.

However, sometimes we want that the values of certain terms (or tuples of terms) are preserved on both sides after the evaluation of the disjunction. This is desirable especially when we are using variables to carry information about sets (or tuples of variables to carry information about relations). This method will be crucial in the proof of Theorem \ref{the: Expressing ESO[k]-formula with INEX[k]} later in this paper.

For this purpose we introduce \emph{term value preserving disjunction}. It can be defined by using constancy atoms, intuitionistic disjunctions and inclusion atoms of the same arity as the lengths of the tuples whose values we want to preserve. Thus, with this operator, the values of single terms can be preserved in INEX[$1$] and the values of $k$-tuples of terms can be preserved in INEX[$k$].

\begin{definition}\label{def: Term value preserving disjunction}
Let $\tuple{\vphantom\wedge\smash{t}}_1,\dots,\tuple{\vphantom\wedge\smash{t}}_n$ be $k$-tuples of $L$-terms, $\varphi,\psi\in\INEXset$ and $c_l,c_r,y$ fresh variables. We define
\begin{align*}
	\varphi\!\!\underset{\scriptscriptstyle\tuple{\vphantom\wedge\smash{t}}_1,\dots,\tuple{\vphantom\wedge\smash{t}}_n}{\vee}\!\!\psi \,:=\,& 
	(\varphi\sqcup\psi)\,\sqcup\,\Ee c_l\Ee c_r\,\bigl(\dep(c_l)\wedge\dep(c_r)
	\wedge c_l\neq c_r \\[-0,1cm]
	&\hspace{20mm}\wedge\Ee y\,\bigl(((y=c_l\wedge\varphi)\vee(y=c_r\wedge\psi)) 
	\wedge\bigwedge_{i\leq n}(\theta_i\wedge\theta_i')\bigr)\bigr), \\
	\theta_i &:= \Ee\tuple z_1\Ee\tuple z_2
	\bigl(((y=c_l\wedge\tuple z_1=\tuple{t_i}\wedge\tuple z_2=\tuple c_1\,) \\[-0,1cm]
	&\hspace{2cm}\vee(y=c_r\wedge\tuple z_1=\tuple c_1\wedge\tuple z_2=\tuple{\vphantom\wedge\smash{t}}_i))
	\wedge\tuple{\vphantom\wedge\smash{t}}_i\subseteq\tuple z_1\wedge\tuple{\vphantom\wedge\smash{t}}_i\subseteq\tuple z_2\bigr) \\
	\theta_i' &:= \Ee\tuple z_1\Ee\tuple z_2
	\bigl(((y=c_l\wedge\tuple z_1=\tuple{\vphantom\wedge\smash{t}}_i\wedge\tuple z_2=\tuple c_2\,) \\[-0,1cm]
	&\hspace{2cm}\vee(y=c_r\wedge\tuple z_1=\tuple c_2\wedge\tuple z_2=\tuple{\vphantom\wedge\smash{t}}_i))
	\wedge\tuple{\vphantom\wedge\smash{t}}_i\subseteq\tuple z_1
	\wedge\tuple{\vphantom\wedge\smash{t}}_i\subseteq\tuple z_2\bigr),
\end{align*}
where $\tuple z_1,\tuple z_2,\tuple c_1,\tuple c_2$ are $k$-tuples of variables such that the tuples $\tuple z_1,\tuple z_2$ consist of fresh variables, and $\tuple c_1,\tuple c_2$ are defined as $\tuple c_1 := c_l\dots c_l$ and $\tuple c_2:= c_r\dots c_r$.
\end{definition}
The next proposition gives the truth condition for this operator. Note that this truth condition is the same as for the normal disjunction with an extra condition that the values for the tuples $\tuple{\vphantom\wedge\smash{t}}_1,\dots,\tuple{\vphantom\wedge\smash{t}}_n$ must be preserved on both sides after splitting the team (supposing that the split is nontrivial).
\begin{proposition}\label{the: Term value preserving disjunction}
With the same assumptions as in Definition \ref{def: Term value preserving disjunction}, we obtain the following truth condition:
\begin{align*}
	\mathcal{M}\true_X \varphi\!\!\underset{\scriptscriptstyle\tuple{\vphantom\wedge\smash{t}}_1,\dots,\tuple{\vphantom\wedge\smash{t}}_n}{\vee}\!\!\psi  
	&\text{ iff there are } Y,Y'\!\subseteq\!X \text{ s.t. } Y\cup Y'\!=\!X, \, \mathcal{M}\true_{Y}\varphi, \,
	\mathcal{M}\true_{Y'}\psi \\[-0,1cm]
	&\;\text{ and if }\, Y,Y'\neq\emptyset, \text{ then }
	Y(\tuple{\vphantom\wedge\smash{t}}_i)\!=\!X(\tuple{\vphantom\wedge\smash{t}}_i)\!
	=\!Y'(\tuple{\vphantom\wedge\smash{t}}_i) \text{ for all }\,i\leq n.
\end{align*}
\end{proposition}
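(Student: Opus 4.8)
The plan is to strip off the two outer intuitionistic disjuncts with Proposition~\ref{the: Intuitionistic disjunction}. Write $\chi$ for the second top-level disjunct of the formula in Definition~\ref{def: Term value preserving disjunction} (the one beginning with $\Ee c_l\Ee c_r$), so that $\mathcal{M}\true_X(\varphi\sqcup\psi)\sqcup\chi$ holds iff $\mathcal{M}\true_X\varphi$, or $\mathcal{M}\true_X\psi$, or $\mathcal{M}\true_X\chi$. The first two of these correspond, on the right-hand side of the Proposition, to the splits $(Y,Y')=(X,\emptyset)$ and $(Y,Y')=(\emptyset,X)$, whose value-preservation clause is vacuous; conversely, any split with $Y=\emptyset$ or $Y'=\emptyset$ produces one of them. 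Hence the statement reduces to: \emph{$\mathcal{M}\true_X\chi$ iff there are $Y,Y'\subseteq X$ with $Y\cup Y'=X$, $Y,Y'\neq\emptyset$, $\mathcal{M}\true_Y\varphi$, $\mathcal{M}\true_{Y'}\psi$, and $Y(\tuple{\vphantom\wedge\smash{t}}_i)=X(\tuple{\vphantom\wedge\smash{t}}_i)=Y'(\tuple{\vphantom\wedge\smash{t}}_i)$ for all $i\leq n$} --- with two degenerate cases absorbed into the first two disjuncts, namely $X=\emptyset$ (where everything holds by the empty team property) and $|M|=1$ (where a split with $Y,Y'\neq\emptyset$ and $Y\cup Y'=X$ forces $Y=Y'=X$, so $\mathcal{M}\true_X\varphi\sqcup\psi$).

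For the forward direction of this key equivalence, assume $\mathcal{M}\true_X\chi$ with $X\neq\emptyset$ and $|M|\geq 2$; by locality we may take all the fresh variables outside $\dom(X)$. The conjuncts $\dep(c_l)\wedge\dep(c_r)\wedge c_l\neq c_r$ force the $\Ee c_l\Ee c_r$-witnesses to be constant with distinct values $e_l\neq e_r$, so we may argue on $X^{*}:=X[e_l/c_l][e_r/c_r][F/y]$ for a suitable $F$. The disjunct $(y=c_l\wedge\varphi)\vee(y=c_r\wedge\psi)$ being true in $X^{*}$ forces $s(y)\in\{e_l,e_r\}$ for all $s\in X^{*}$; splitting $X^{*}$ into $W:=\{s\in X^{*}\mid s(y)=e_l\}$ and $W':=\{s\in X^{*}\mid s(y)=e_r\}$, it forces $\mathcal{M}\true_W\varphi$ and $\mathcal{M}\true_{W'}\psi$. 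Taking $\dom(X)$-reducts yields $Y,Y'\subseteq X$ with $Y\cup Y'=X$, $\mathcal{M}\true_Y\varphi$, $\mathcal{M}\true_{Y'}\psi$ (by locality), $Y(\tuple{\vphantom\wedge\smash{t}}_i)=W(\tuple{\vphantom\wedge\smash{t}}_i)$, $Y'(\tuple{\vphantom\wedge\smash{t}}_i)=W'(\tuple{\vphantom\wedge\smash{t}}_i)$, and $Y=\emptyset\iff W=\emptyset$, $Y'=\emptyset\iff W'=\emptyset$. If $Y$ or $Y'$ is empty there is nothing more to prove; so assume $W,W'\neq\emptyset$ and unwind $\theta_i$. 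Its inner disjunction, true in $X^{*}[\mathcal{F}_1/\tuple z_1][\mathcal{F}_2/\tuple z_2]$, again splits that team along $y=e_l$ / $y=e_r$, forcing $\mathcal{F}_1(s)=\{s(\tuple{\vphantom\wedge\smash{t}}_i)\}$ and $\mathcal{F}_2\equiv\{(e_l,\dots,e_l)\}$ on the $W$-part and $\mathcal{F}_1\equiv\{(e_l,\dots,e_l)\}$, $\mathcal{F}_2(s)=\{s(\tuple{\vphantom\wedge\smash{t}}_i)\}$ on the $W'$-part; hence on that team $\tuple z_1$ ranges over $W(\tuple{\vphantom\wedge\smash{t}}_i)\cup\{(e_l,\dots,e_l)\}$ and $\tuple z_2$ over $W'(\tuple{\vphantom\wedge\smash{t}}_i)\cup\{(e_l,\dots,e_l)\}$, while $X^{*}(\tuple{\vphantom\wedge\smash{t}}_i)=X(\tuple{\vphantom\wedge\smash{t}}_i)$ since the fresh variables do not occur in $\tuple{\vphantom\wedge\smash{t}}_i$. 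So the atoms $\tuple{\vphantom\wedge\smash{t}}_i\subseteq\tuple z_1$ and $\tuple{\vphantom\wedge\smash{t}}_i\subseteq\tuple z_2$ give
\[
	X(\tuple{\vphantom\wedge\smash{t}}_i)\subseteq W(\tuple{\vphantom\wedge\smash{t}}_i)\cup\{(e_l,\dots,e_l)\}
	\qquad\text{and}\qquad
	X(\tuple{\vphantom\wedge\smash{t}}_i)\subseteq W'(\tuple{\vphantom\wedge\smash{t}}_i)\cup\{(e_l,\dots,e_l)\}.
\]
The formula $\theta_i'$ is the same with $(e_r,\dots,e_r)$ replacing $(e_l,\dots,e_l)$, so it yields the two analogous inclusions. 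Intersecting the four bounds and using $e_l\neq e_r$ (hence $(e_l,\dots,e_l)\neq(e_r,\dots,e_r)$) removes the spurious element, leaving $X(\tuple{\vphantom\wedge\smash{t}}_i)\subseteq W(\tuple{\vphantom\wedge\smash{t}}_i)$ and $X(\tuple{\vphantom\wedge\smash{t}}_i)\subseteq W'(\tuple{\vphantom\wedge\smash{t}}_i)$; combined with the trivial reverse inclusions this gives $Y(\tuple{\vphantom\wedge\smash{t}}_i)=X(\tuple{\vphantom\wedge\smash{t}}_i)=Y'(\tuple{\vphantom\wedge\smash{t}}_i)$.

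For the backward direction I run this computation in reverse. Given $Y,Y'\neq\emptyset$ with $Y\cup Y'=X$, $\mathcal{M}\true_Y\varphi$, $\mathcal{M}\true_{Y'}\psi$ and $Y(\tuple{\vphantom\wedge\smash{t}}_i)=X(\tuple{\vphantom\wedge\smash{t}}_i)=Y'(\tuple{\vphantom\wedge\smash{t}}_i)$ for all $i$ (and $|M|\geq 2$; the case $|M|=1$ is absorbed as above), fix $e_l\neq e_r$ in $M$, interpret $c_l,c_r$ as these elements, and let $F$ send a $y$-expanded assignment to $\{e_l\}$, $\{e_r\}$ or $\{e_l,e_r\}$ according as its $\dom(X)$-reduct lies in $Y$ only, in $Y'$ only, or in both; this is nonempty because $Y\cup Y'=X$, and it makes $W$ and $W'$ reduce onto $Y$ and $Y'$, so $\mathcal{M}\true_W\varphi$, $\mathcal{M}\true_{W'}\psi$ by locality, witnessing the disjunction $(y=c_l\wedge\varphi)\vee(y=c_r\wedge\psi)$. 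For each $\theta_i$ (and likewise each $\theta_i'$) take $\mathcal{F}_1,\mathcal{F}_2$ to be precisely the functions identified above; splitting the resulting team along $y=e_l$ / $y=e_r$ witnesses its inner disjunction, and since $W(\tuple{\vphantom\wedge\smash{t}}_i)=Y(\tuple{\vphantom\wedge\smash{t}}_i)=X(\tuple{\vphantom\wedge\smash{t}}_i)=Y'(\tuple{\vphantom\wedge\smash{t}}_i)=W'(\tuple{\vphantom\wedge\smash{t}}_i)$ the atoms $\tuple{\vphantom\wedge\smash{t}}_i\subseteq\tuple z_1$ and $\tuple{\vphantom\wedge\smash{t}}_i\subseteq\tuple z_2$ hold (the extra value $(e_l,\dots,e_l)$ only enlarges the range of $\tuple z_j$). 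Therefore $\mathcal{M}\true_X\chi$, which completes the equivalence. The step I expect to be the main obstacle is precisely the analysis of $\theta_i$: one has to pin down, on each of the two $y$-parts, which of $\tuple z_1,\tuple z_2$ is forced to equal $\tuple{\vphantom\wedge\smash{t}}_i$ and which to equal the constant tuple, and then see that each of the two inclusion atoms in $\theta_i$ contributes exactly one spurious value $(e_l,\dots,e_l)$ to the bound on $X(\tuple{\vphantom\wedge\smash{t}}_i)$ --- the whole point of carrying both $\theta_i$ (with $\tuple c_1=c_l\dots c_l$) and $\theta_i'$ (with $\tuple c_2=c_r\dots c_r$) is that those two spurious values are distinct, so intersecting the bounds they produce leaves exactly $W(\tuple{\vphantom\wedge\smash{t}}_i)=X(\tuple{\vphantom\wedge\smash{t}}_i)=W'(\tuple{\vphantom\wedge\smash{t}}_i)$.
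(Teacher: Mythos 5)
Your proof is correct and follows essentially the same route as the paper's: treat the outer intuitionistic disjuncts and the degenerate cases ($X=\emptyset$, one side empty, $\abs{M}=1$) separately, fix two distinct constant labels forced by $\dep(c_l)\wedge\dep(c_r)\wedge c_l\neq c_r$, observe that the label $y$ forces all the relevant splits, and use $\theta_i$ and $\theta_i'$ to obtain the bounds $X(\tuple{\vphantom\wedge\smash{t}}_i)\subseteq Y(\tuple{\vphantom\wedge\smash{t}}_i)\cup\{\tuple a\}$ and $X(\tuple{\vphantom\wedge\smash{t}}_i)\subseteq Y(\tuple{\vphantom\wedge\smash{t}}_i)\cup\{\tuple{\vphantom t\smash{b}}\}$ (and likewise for $Y'$), which intersect to give preservation since $\tuple a\neq\tuple{\vphantom t\smash{b}}$; the backward construction of the witness functions is the paper's as well. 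The only cosmetic difference is that you pin down the witness functions and value sets exactly where the paper chases individual elements, and your stated ``key equivalence'' is slightly looser than what you actually prove, but the executed argument covers the case where the split derived from $\chi$ has an empty side, so there is no gap.
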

Before presenting a proof for this proposition, we explain its idea here briefly:  We first check if the splitting can be done so that one of the sides is the empty team. In this case we don't set any requirements since all $\INEXset$-formulas are true in the empty team and on the other side values are trivially preserved since it has to be the whole team $X$.

Otherwise we fix two constants $c_l,c_r$ which correspond to the left hand and right hand sides of the disjunction.  Then we attach a ``label'' $y$ to each assignment in the team. This label can be either $c_l$, $c_r$ or both depending on if the assignment in question will be placed on the left, on the right or both. Since these labels are attached before doing the actual splitting, we can check beforehand that the information will be preserved.

The truth of formula $\theta_i$ guarantees that values of term $t_i$ will be preserved on both sides for all values, expect possibly for the value of $\tuple c_1$ which is a constant. The formula $\theta_i'$ does the same, but it cannot make sure that the value for the constant $\tuple c_2$ is preserved. But the truth of both $\theta_i$ and $\theta_i'$ guarantees that the values for $\tuple{\vphantom\wedge\smash{t}}_i$ are indeed preserved on both sides.

\begin{proof}
(Proposition \ref{the: Term value preserving disjunction})
In this proof we use the abbreviation $\varphi\,\veebar\,\psi\,:=\,\varphi\!\!\underset{\scriptscriptstyle\tuple{\vphantom\wedge\smash{t}}_1,\dots,\tuple{\vphantom\wedge\smash{t}}_n}{\vee}\!\!\psi$.


\noindent
If $X$ would be an empty team, the claim would hold trivially, and thus we may assume that $X\neq\emptyset$. By locality we may also assume that $c_l,c_r,y\notin\dom(X)$.

\smallskip

\noindent
Suppose first that $\mathcal{M}\true_X\varphi\veebar\psi$. Now either $\mathcal{M}\true_X\varphi\sqcup\psi$ or
\begin{align*}
	\mathcal{M}\true_X\Ee c_l\Ee c_r&\bigl(\dep(c_l)\wedge\dep(c_r)\wedge c_l\neq c_r \\[-0,05cm]
	&\wedge\Ee y\,\bigl(((y=c_l\wedge\varphi)\vee(y=c_r\wedge\psi))
	\wedge\bigwedge_{i\leq n}(\theta_i\wedge\theta_i')\bigr)\bigr). \tag{$\star$} \\[-0,8cm]
\end{align*}
Suppose first that $\mathcal{M}\true_X\varphi\sqcup\psi$, i.e. $\mathcal{M}\true_X\varphi$ or $\mathcal{M}\true_X\psi$. If $\mathcal{M}\true_X\varphi$, then we can choose $Y:=X$ and $Y':=\emptyset$, when the claim holds trivially. Analogously if $\mathcal{M}\true_X\psi$, we can choose $Y:=\emptyset$ and $Y':=X$. 
Suppose then that ($\star$) holds. Now there exist $F_1:X\rightarrow\mathcal{P}^*(M)$ and $F_2:X[F_1/c_l]\rightarrow\mathcal{P}^*(M)$ such that
\begin{align*}
	\mathcal{M}\true_{X_1}&\dep(c_l)\wedge\dep(c_r)\wedge c_l\neq c_r \\[-0,05cm]
	&\quad\wedge\Ee y\,\bigl(((y=c_l\wedge\varphi)
	\vee(y=c_r\wedge\psi))\wedge\bigwedge_{i\leq n}(\theta_i\wedge\theta_i')\bigr), \\[-0,8cm]
\end{align*}
where $X_1:=X[F_1/c_l,F_2/c_r]$.
Since $\mathcal{M}\true_{X_1}\!\dep(c_l)$, $\mathcal{M}\true_{X_1}\!\dep(c_r)$, $\mathcal{M}\true_{X_1}\!c_l\!\neq\! c_r$ and $X\neq\emptyset$, there exist $a,b\in M$ such that $X_1(c_l)=\{a\}$, $X_1(c_r)=\{b\}$ and $a\neq b$. There also exists a function $F_3:X_1\rightarrow\mathcal{P}^*(M)$ such that 
\[
	\mathcal{M}\true_{X_2}((y=c_l\wedge\varphi)\vee(y=c_r\wedge\psi))
	\wedge\bigwedge_{i\leq n}(\theta_i\wedge\theta_i'), \;\text{ where } X_2:=X_1[F_3/y].
\]
Now there exist $Z_1,Z_1'\subseteq X_2$, such that $Z_1\cup Z_1'=X_2$, $\mathcal{M}\true_{Z_1} y=c_l\wedge\varphi$ and $\mathcal{M}\true_{Z_1'} y=c_r\wedge\psi$. Since $X_2(c_l)=\{a\}$, $X_2(c_r)=\{b\}$ and $a\neq b$, it is easy to see that the following holds for each $s\in X_2$:
\vspace{-0,1cm}\[
	s\in Z_1 \;\text{ iff } s(y)=a \quad\text{ and }\quad s\in Z_1' \;\text{ iff } s(y) = b.
\]
Let $Y:=Z_1\upharpoonright\dom(X)$ and $Y':=Z_1'\upharpoonright\dom(X)$.
Since $\mathcal{M}\true_{Z_1}\varphi$ and $\mathcal{M}\true_{Z_1'}\psi$, we have $\mathcal{M}\true_Y\varphi$ and $\mathcal{M}\true_{Y'}\psi$ by locality. Because $Z_1\cup Z_1'=X_2$, we must also have $Y\cup Y'=X $ (recall that we assumed that $c_l,c_r,y\notin\dom(X)$).

We still need to show that the values of $\tuple{\vphantom\wedge\smash{t}}_i$ ($i\leq n$) are preserved when $X$ is split into $Y$ and $Y'$. For the sake of showing this, let $i\leq n$, whence $\mathcal{M}\true_{X_2}\theta_i\wedge\theta_i'$. In particular  $\mathcal{M}\true_{X_2}\theta_i$ and thus there are $\mathcal{F}_1:X_2\rightarrow\mathcal{P}^*(M^k)$ and  $\mathcal{F}_2:X_2[\mathcal{F}_1/\tuple z_1]\rightarrow\mathcal{P}^*(M^k)$ such that
\begin{align*}
	\mathcal{M}\true_{X_3}&\,\bigl((y=c_l\wedge\tuple z_1=\tuple{\vphantom\wedge\smash{t}}_i\wedge\tuple z_2=\tuple c_1) \\[-0,1cm]
	&\vee(y=c_r\wedge\tuple z_1=\tuple c_1\wedge\tuple z_2=\tuple{\vphantom\wedge\smash{t}}_i)\bigr)
	\wedge\tuple{\vphantom\wedge\smash{t}}_i\subseteq\tuple z_1\wedge\tuple{\vphantom\wedge\smash{t}}_i\subseteq\tuple z_2,
\end{align*}
where $X_3 = X_2[\mathcal{F}_1/\tuple z_1,\mathcal{F}_2/\tuple z_2]$.
Now there are $Z_2,Z_2'\subseteq X_3$ s.t. $Z_2\!\cup\!Z_2'=X_3$ and
\[
	\begin{cases}
 		\mathcal{M}\true_{Z_2}y=c_l\wedge\tuple z_1=\tuple{\vphantom\wedge\smash{t}}_i\wedge\tuple z_2=\tuple c_1 \\
		\mathcal{M}\true_{Z_2'}y=c_r\wedge\tuple z_1=\tuple c_1\wedge\tuple z_2=\tuple{\vphantom\wedge\smash{t}}_i.
	\end{cases}
\]
Let $\tuple a:=(a,\dots,a)$ and $\tuple b:=(b,\dots,b)$.
For the sake of showing that $X(\tuple{\vphantom\wedge\smash{t}}_i)\subseteq Y(\tuple{\vphantom\wedge\smash{t}}_i)\cup\{\tuple a\}$, let $\tuple c\in X(\tuple{\vphantom\wedge\smash{t}}_i)$. Now there is $s\in X$ such that $s(\tuple{\vphantom\wedge\smash{t}}_i)=\tuple c$, whence there is $r\in X_3$ such that $r(\tuple{\vphantom\wedge\smash{t}}_i)=s(\tuple{\vphantom\wedge\smash{t}}_i)$. Since $\mathcal{M}\true_{X_3}\tuple{\vphantom\wedge\smash{t}}_i\subseteq\tuple z_1$, there exists $r'\in X_3$ such that $r'(\tuple z_1)=r(\tuple{\vphantom\wedge\smash{t}}_i)$. Now we have $\tuple c = s(\tuple{\vphantom\wedge\smash{t}}_i) = r(\tuple{\vphantom\wedge\smash{t}}_i) = r'(\tuple z_1)$.

Suppose first $r'\in Z_2$. Then $r'(\tuple z_1)=r'(\tuple{\vphantom\wedge\smash{t}}_i)$ and $r'(y)=r'(c_l)=a$. Hence there is $s'\in Y$ s.t. $s'(\tuple{\vphantom\wedge\smash{t}}_i)=r'(\tuple{\vphantom\wedge\smash{t}}_i)$. Now $\tuple c = r'(\tuple z_1) = r'(\tuple{\vphantom\wedge\smash{t}}_i) = s'(\tuple{\vphantom\wedge\smash{t}}_i) \in Y(\tuple{\vphantom\wedge\smash{t}}_i)$.
If $r'\notin Z_2$, then $r'\in Z_2'$, whence we have $\tuple c=r'(\tuple z_1)=r'(\tuple c_1)=r'(c_l\dots c_l)=\tuple a$. Hence in either case $\tuple c\in Y(\tuple{\vphantom\wedge\smash{t}}_i)\cup\{\tuple a\}$ and thus $X(\tuple{\vphantom\wedge\smash{t}}_i)\subseteq Y(\tuple{\vphantom\wedge\smash{t}}_i)\cup\{\tuple a\}$.

By using the fact that $\mathcal{M}\true_{X_2}\theta_i'$, we can analogously deduce the inclusion $X(\tuple{\vphantom\wedge\smash{t}}_i)\subseteq Y(\tuple{\vphantom\wedge\smash{t}}_i)\cup\{\tuple{\vphantom t\smash{b}}\,\}$. Since $\tuple a\neq\tuple{\vphantom t\smash{b}}$, it thus has to be that $X(\tuple{\vphantom\wedge\smash{t}}_i)\subseteq Y(\tuple{\vphantom\wedge\smash{t}}_i)$. Clearly $Y(\tuple{\vphantom\wedge\smash{t}}_i)\subseteq X(\tuple{\vphantom\wedge\smash{t}}_i)$, and therefore we have $Y(\tuple{\vphantom\wedge\smash{t}}_i)=X(\tuple{\vphantom\wedge\smash{t}}_i)$. By using a symmetric argumentation we can also show that $Y'(\tuple{\vphantom\wedge\smash{t}}_i)=X(\tuple{\vphantom\wedge\smash{t}}_i)$.

\medskip

\noindent
Suppose then that there exist $Y,Y'\subseteq X$ such that $Y\cup Y'=X$, $\mathcal{M}\true_Y\varphi$ and $\mathcal{M}\true_{Y'}\psi$, and if $Y,Y'\neq\emptyset$, then we have $Y(\tuple{\vphantom\wedge\smash{t}}_i)\!=\!Y'(\tuple{\vphantom\wedge\smash{t}}_i)\!=\!X(\tuple{\vphantom\wedge\smash{t}}_i)$ for each~$i\leq n$.

If $Y\!=\!\emptyset$, then $Y'\!=\!X$ and thus $\mathcal{M}\true_X\psi$. Therefore $\mathcal{M}\true_X\varphi\sqcup\psi$ and thus $\mathcal{M}\true\varphi\veebar\psi$. And if $Y'=\emptyset$, we obtain $\mathcal{M}\true\varphi\veebar\psi$ by a similar argumentation. Hence we may assume $Y,Y'\neq~\emptyset$, whence $Y(\tuple{\vphantom\wedge\smash{t}}_i)=Y'(\tuple{\vphantom\wedge\smash{t}}_i)=X(\tuple{\vphantom\wedge\smash{t}}_i)$ for each $i\leq n$.

We first examine the special case when $\abs{M}=1$. Because $X\neq\emptyset$, the team $X$ has to be a singleton $\{s\}$ for some $s$. Since $Y,Y'\neq\emptyset$, we have $Y=X$ and $Y'=X$. Therefore $\mathcal{M}\true_X\varphi\sqcup\psi$ and thus we have $\mathcal{M}\true_X\varphi\veebar\psi$.
Hence we may assume that $\abs{M}\geq 2$, whence there are $a,b\in M$ such that $a\neq b$. 

Let $F_1:X\rightarrow\mathcal{P}^*(M)$ s.t. $s\mapsto\{a\}$ and let $F_2:X[F_1/c_l]\rightarrow\mathcal{P}^*(M)$ s.t. $s\mapsto\{b\}$. We write $X_1:=X[F_1/c_l,F_2/c_r]$. By the definitions of $F_1$ and~$F_2$, we clearly have $\mathcal{M}\true_{X_1}\dep(c_l)$, $\mathcal{M}\true_{X_1}\dep(c_r)$ and $\mathcal{M}\true_{X_1}c_l\neq c_r$. Let
\[
	F_3:X_1\rightarrow\mathcal{P}^*(M)\, \text{ s.t. }
	\begin{cases}
		s\mapsto\{a\} \hspace{0,48cm}\text{ if }\; s\upharpoonright\dom(X)\in Y\setminus Y' \\
		s\mapsto\{b\} \hspace{0,51cm}\text{ if }\; s\upharpoonright\dom(X)\in Y'\setminus Y \\
		s\mapsto\{a,b\} \;\text{ if }\; s\upharpoonright\dom(X)\in Y\cap Y'.
	\end{cases}
\]
We define the following teams $X_2:=X_1[F_3/y]$, $Z_1:=\{s\in X_3\mid s(y)=a\}$ and $Z_1':=\{s\in X_3\mid s(y)=b\}$. Now it clearly holds that $Z_1\cup Z_1'=X_2$, $\mathcal{M}\true_{Z_1}y=c_l$ and $\mathcal{M}\true_{Z_1'}y=c_r$. By locality and the definition of $F_3$, we have $\mathcal{M}\true_{Z_1}\varphi$ and $\mathcal{M}\true_{Z_1'}\psi$. Therefore $\mathcal{M}\true_{X_2}(y=c_l\wedge\varphi)\vee(y=c_r\wedge\psi)$.

Let $i\leq n$. We define $\tuple a:=(a,\dots,a)$ and
\[
	\begin{cases}
		\mathcal{F}_1:X_2\rightarrow\mathcal{P}^*(M^k)\, \text{ s.t. }
		\begin{cases}
			s\mapsto\{s(\tuple{\vphantom\wedge\smash{t}}_i)\}\, \text{ if } s(y)=a \\
			s\mapsto\{\tuple a\}\, \quad\;\,\text{ if } s(y)=b
		\end{cases} \\
		\mathcal{F}_2:X_2[\mathcal{F}_1/\tuple z_1]\rightarrow\mathcal{P}^*(M^k)\, \text{ s.t. }
		\begin{cases}
			s\mapsto\{\tuple a\}\, \quad\;\,\text{ if } s(y)=a \\
			s\mapsto\{s(\tuple{\vphantom\wedge\smash{t}}_i)\}\, \text{ if } s(y)=b.
		\end{cases}
	\end{cases}
\]
Let $X_3:=X_2[\mathcal{F}_1/\tuple z_1,\mathcal{F}_2/\tuple z_2]$, $Z_2:=\{s\in X_3\mid s(y)=a\}$ and $Z_2':=\{s\in X_3\mid s(y)=b\}$. Now $Z_2\cup Z_2'=X_3$ and by the definitions of $\mathcal {F}_1$ and $\mathcal{F}_2$ we have
\[
	\mathcal{M}\true_{Z_2}y=c_l\wedge\tuple z_1=\tuple{\vphantom\wedge\smash{t}}_i\wedge\tuple z_2=\tuple c_1
	\quad\text{and}\quad
	\mathcal{M}\true_{Z_2'}y=c_r\wedge\tuple z_1=\tuple c_1\wedge\tuple z_2=\tuple{\vphantom\wedge\smash{t}}_i.
\]
For the sake of showing that $\mathcal{M}\true_{X_3}\tuple{\vphantom\wedge\smash{t}}_i\inc\tuple z_1$, let $r\in X_3$. Now there is $s\in X$, s.t. $r(\tuple{\vphantom\wedge\smash{t}}_i) = s(\tuple{\vphantom\wedge\smash{t}}_i)$. Since $s(\tuple{\vphantom\wedge\smash{t}}_i)\in X(\tuple{\vphantom\wedge\smash{t}}_i)=Y(\tuple{\vphantom\wedge\smash{t}}_i)$, there is $s'\in Y$, such that $s'(\tuple{\vphantom\wedge\smash{t}}_i)=s(\tuple{\vphantom\wedge\smash{t}}_i)$. Let $r':=s'[a/c_l,b/c_r,a/y,s'(\tuple{\vphantom\wedge\smash{t}}_i)/\tuple z_1,\tuple a/\tuple z_2]$. Now $r'\in X_3$ and $r(\tuple{\vphantom\wedge\smash{t}}_i) = s(\tuple{\vphantom\wedge\smash{t}}_i) = s'(\tuple{\vphantom\wedge\smash{t}}_i) = r'(\tuple z_1)$.
Hence $\mathcal{M}\true_{X_3}\tuple{\vphantom\wedge\smash{t}}_i\inc\tuple z_1$. Analogously we can show that $\mathcal{M}\true_{X_3}\tuple{\vphantom\wedge\smash{t}}_i\inc\tuple z_2$ and thus $\mathcal{M}\true_{X_2}\theta_i$. By a similar argumentation $\mathcal{M}\true_{X_2}\theta_i'$ and thus $\mathcal{M}\true_{X_2}\bigwedge_{i\leq n}(\theta_i\wedge\theta_i')$. Hence ($\star$) holds, and therefore $\mathcal{M}\true_X\varphi\veebar\psi$.
\end{proof}

\begin{remark}
The tuples $\tuple{\vphantom\wedge\smash{t}}_1,\dots,\tuple{\vphantom\wedge\smash{t}}_n$ of terms, in term value preserving disjunction for $k$-tuples, could also be of different lengths (at most $k$) since we can repeat the last term  in a tuple several times in order to make it a $k$-tuple. 
\end{remark}

Term value preserving disjunction has several natural variants. The version we defined requires that the values of given tuples of terms are preserved both on the left and right side of the disjunction. We could weaken this condition by requiring these values to be preserved only on the left, only on the right or only on either of the sides without specifying which. Or we could modify this condition by requiring different tuples of terms to be preserved on the left and different tuples to be preserved on the right.

Now we allow the splitting to be done in such a way that either of the sides becomes empty, which is natural for our needs since INEX has empty team property. But strictly speaking, the values of the given terms are not necessarily preserved in this case, since there are no values in the empty team. If we require values to be preserved in then as well, we can additionally require that splitting must be done in a way that neither of the sides becomes empty. If we only require this condition -- ignoring the values of any terms -- we obtain a disjunction that can be seen as a dual operator for intuitionistic disjunction\footnote{Intutionistic disjunction states that the splitting must be done in a way that either of the sides becomes empty -- a dual condition is that neither of the sides can be left empty.}.

We will not go into details here, but all of the variants described above can be defined in INEX. We just need to do some simple modifications on the formula that defines term value preserving disjunction in Definition~\ref{def: Term value preserving disjunction}. In this paper we use term value preserving disjunction only as a useful tool in INEX, but it would be interesting to study the properties and the expressive power of this operator (or some of its variants) independently. 
We could also add it to some related logics and see how it affects their expressive power.


\subsection{Relativization method for team semantics}\label{ssec: Relativization}


In this subsection we introduce an application which uses several of the new operators that we have defined in this section. Suppose that $\varphi$ is an $\INEXset$-sentence and $y\notin\vr(\varphi)$ is a variable in the domain of a team $X$. If we replace all quantifiers $\Ee x,\Ae x$ in $\varphi$ with the corresponding inclusion quantifiers $(\Ee x\inc y)$, $(\Ae x\inc y)$, the evaluation of the resulting formula is identical to evaluation of~$\varphi$, except that the quantifiers in $\varphi$ may only choose values within the values of~$y$. If we further replace disjunctions in $\varphi$ with the ones that preserve the value of $y$, then the quantifications may only choose values within the set $X(y)$ (the \emph{initial} values of $y$ in $X$). Since the resulting formula only ``sees'' the part of model that is restricted to the set $X(y)$, we call this method \emph{relativization}.

\begin{definition}
Let $\varphi$ be an $\INEXset$-sentence and let $y\notin\vr(\varphi)$ be a variable. \emph{The relativization of $\varphi$ on $y$}, denoted by $\varphi\!\upharpoonright\!y$, is defined  recursively:
\begin{align*}
	\psi\!\upharpoonright\!y &= \psi \quad \text{if $\psi$ is a literal or inclusion/exclusion atom} \\
	(\psi\wedge\theta)\!\upharpoonright\!y &= \psi\!\upharpoonright\!y\,\wedge\,\theta\!\upharpoonright\!y \\
	(\psi\vee\theta)\!\upharpoonright\!y &= \psi\!\upharpoonright\!y\,\veebar\,\theta\!\upharpoonright\!y,
	\quad \text{ where } \veebar\,:=\,\underset{y}{\vee} \\
	(\Ee x\,\psi)\!\upharpoonright\!y &= (\Ee x\!\subseteq\!y)(\psi\!\upharpoonright\!y) \\
	(\Ae x\,\psi)\!\upharpoonright\!y &= (\Ae x\!\subseteq\!y)(\psi\!\upharpoonright\!y).
\end{align*}
Note that since $\varphi$ is a sentence, we have $\fr(\varphi\!\upharpoonright\!y)=\{y\}$. 
Any formula $\varphi$ (of any logic with team semantics) could be relativized on any variable $y$ as above, but here we only examine a special case when it is applied to $\INEXset$-sentences. 
\end{definition}

Let $X$ be a team and $y\in\dom(X)\setminus\vr(\varphi)$. If $\varphi$ defines some property of the domain of a model, then the formula $\varphi\!\upharpoonright\!y$ defines the same property of the set values for $y$ in the team $X$. This is proven in Proposition~\ref{the: Relativization} below. This proposition could be proven also for many other logics $\mathcal{L}$ with team semantics. If the following assumptions hold for $\mathcal{L}$, the proof can be done identically as it is done here: $\mathcal{L}$ is an extension of FO with new atomic formulas, it is local, has empty team property, and inclusion quantifiers (for single variables) and term value preserving disjunction (for single terms) can expressed in $\mathcal{L}$. Note that in order to express these operators, it would suffice that we could use \emph{unary} inclusion and exclusion atoms in $\mathcal{L}$.

If $\mathcal{M}=(\mathcal{I},M)$ is an $L$-model and $A\subseteq M$, the notation $\mathcal{M}\upharpoonright A$ denotes the submodel of $\mathcal{M}$ that is \emph{relativized} on $A$. That is, the universe of $\mathcal{M}\upharpoonright A$ is the set $A$ and the symbols in $L$ are interpreted as: $R^{\mathcal{M}\upharpoonright A}=R^\mathcal{M}\upharpoonright A^n$ for $n$-ary relation symbols $R\in L$, $f^{\mathcal{M}\upharpoonright A}=f^\mathcal{M}\upharpoonright A^n$ for $n$-ary function symbols $f\in L$ and $c^{\mathcal{M}\upharpoonright A}=c^\mathcal{M}$ for constant symbols $c\in L$. Note that if $L$ contains function or constant symbols, then $\mathcal{M}\upharpoonright A$ can be an $L$-model only if $f^\mathcal{M}\upharpoonright A^n: A^n\rightarrow A$ for all each $n$-ary $f\in L$ and $c^\mathcal{M}\in A$ for each $c\in L$. But if $L$ is relational, then $\mathcal{M}\upharpoonright A$ is an $L$-model for any $A\subseteq M$.
\begin{proposition}\label{the: Relativization}
Let $\varphi$ be an $\INEXset$-sentence and $y$ be a variable such that $y\notin\vr(\varphi)$. Now
\[
	\mathcal{M}\true_X \varphi\!\upharpoonright\!y \quad\text{iff}\quad \mathcal{M}\upharpoonright X(y) \true\varphi
\]
for all $L$-models $\mathcal{M}$ and teams $X$ such that $\mathcal{M}\upharpoonright X(y)$ is an $L$-model.
\end{proposition}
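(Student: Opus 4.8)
The plan is to prove this by structural induction on $\varphi$, but with a strengthened induction hypothesis: instead of restricting to sentences, I would prove the statement for all $\INEXset$-formulas $\psi$ with the claim that for every team $X$ with $y\notin\dom(X)$ (we may arrange this by locality), writing $A := X(y)$ and assuming $\mathcal{M}\!\upharpoonright\!A$ is an $L$-model,
\[
	\mathcal{M}\true_X \psi\!\upharpoonright\!y \quad\text{iff}\quad \mathcal{M}\!\upharpoonright\!A\true_{X'}\psi,
\]
where $X' := X\!\upharpoonright\!\dom(\psi)$, i.e.\ the team $X$ with the column $y$ deleted (and restricted to variables occurring in $\psi$). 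For this to typecheck we need $X'$ to be a legitimate team \emph{for} $\mathcal{M}\!\upharpoonright\!A$, which requires $s(z)\in A$ for every $s\in X$ and every free variable $z$ of $\psi$; so the precise induction hypothesis should carry the side condition that $X(z)\subseteq A$ for all $z\in\fr(\psi)$, and I must check this condition is propagated through each inductive step (it is, since inclusion quantifiers only introduce values from $X(y)=A$). Specializing to a sentence $\varphi$ and the singleton $X(y)$-free part gives $X' = \{\emptyset\}$ when $X\neq\emptyset$, so $\mathcal{M}\!\upharpoonright\!A\true_{X'}\varphi$ becomes $\mathcal{M}\!\upharpoonright\!A\true\varphi$ by Proposition~\ref{the: Tarski}; the empty-team case is trivial on both sides by empty team property.

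The base cases are literals and inclusion/exclusion atoms. Here $\psi\!\upharpoonright\!y = \psi$, so I must check that a literal or $\inc$/$\exc$ atom holds in $\mathcal{M}$ over $X$ iff it holds in $\mathcal{M}\!\upharpoonright\!A$ over $X'$. Since all values of the relevant terms lie in $A$ (by the side condition, as $\vr(\psi)\subseteq\fr(\psi)\cup\text{(bound)}$ and for atoms all variables are free), and since the interpretations of relation, function and constant symbols in $\mathcal{M}\!\upharpoonright\!A$ agree with those of $\mathcal{M}$ on arguments from $A$, the term interpretations $s(t)$ and the truth conditions (which only quantify over assignments \emph{in the team}, not over $M$) are literally unchanged. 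The conjunction case is immediate from the induction hypothesis. The disjunction case uses Proposition~\ref{the: Term value preserving disjunction}: $(\psi\vee\theta)\!\upharpoonright\!y = \psi\!\upharpoonright\!y \underset{y}{\vee}\theta\!\upharpoonright\!y$ holds over $X$ iff $X$ splits into $Y\cup Y'$ with $\mathcal{M}\true_Y\psi\!\upharpoonright\!y$, $\mathcal{M}\true_{Y'}\theta\!\upharpoonright\!y$, and (when both nonempty) $Y(y)=X(y)=Y'(y)$; the value-preservation clause is exactly what guarantees that applying the induction hypothesis to $Y$ and $Y'$ (whose $y$-column still has range $A$) yields the ordinary split of $X'$ witnessing $\mathcal{M}\!\upharpoonright\!A\true_{X'}\psi\vee\theta$. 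Conversely any split of $X'$ lifts to a value-preserving split of $X$ by keeping the $y$-column intact; the degenerate cases where one side is empty match because of empty team property. I would remark that this is precisely why the \emph{term value preserving} disjunction, rather than plain $\vee$, is needed: a plain split could lose part of $X(y)$ on one side, shrinking the effective universe.

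The quantifier cases are the heart of the argument and the place I expect the main obstacle. For $(\Ee x\,\psi)\!\upharpoonright\!y = (\Ee x\!\subseteq\!y)(\psi\!\upharpoonright\!y)$: by Proposition~\ref{the: Existential quantification}(a), $\mathcal{M}\true_X(\Ee x\!\subseteq\!y)(\psi\!\upharpoonright\!y)$ iff there is $F:X\to\mathcal{P}^*(X(y))$ with $\mathcal{M}\true_{X[F/x]}\psi\!\upharpoonright\!y$; since $X(y)=A$, the team $X[F/x]$ still has $y$-column range $A$ and now also $x$-column range $\subseteq A$, so the side condition survives and the induction hypothesis applies to give $\mathcal{M}\!\upharpoonright\!A\true_{X[F/x]'}\psi$, which is exactly $\mathcal{M}\!\upharpoonright\!A\true_{X'}\Es x\,\psi$ since choice functions into $\mathcal{P}^*(A)$ over $X'$ are exactly (the $y$-deletions of) choice functions into $\mathcal{P}^*(X(y))$ over $X$. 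For $(\Ae x\,\psi)\!\upharpoonright\!y = (\Ae x\!\subseteq\!y)(\psi\!\upharpoonright\!y)$: by Proposition~\ref{the: Universal quantification}(a) this holds over $X$ iff $\mathcal{M}\true_{X[A/x]}\psi\!\upharpoonright\!y$ where $A=X(y)$, and $X[A/x]$ over $\mathcal{M}$ corresponds (deleting $y$) to $X'[A/x]$ over $\mathcal{M}\!\upharpoonright\!A$, i.e.\ the universal quantification $\Ae x\,\psi$ evaluated in the submodel whose universe is exactly $A$. The care points I anticipate: (i) keeping the bookkeeping on $\dom$ and the "$y\notin\dom$" normalization straight across nested quantifiers, repeatedly invoking locality of INEX; (ii) verifying the side condition $X(z)\subseteq A$ is preserved — newly quantified variables $x$ get values from $A$ via the inclusion quantifier, which is the whole point, but one must confirm no step smuggles in a value outside $A$; and (iii) the edge cases $X=\emptyset$ and $A=\emptyset$ (when $X=\{\emptyset\}$ or $X$ has an empty $y$-column), which must be handled separately but are trivial since both sides are then vacuously true or the hypothesis "$\mathcal{M}\!\upharpoonright\!A$ is an $L$-model" fails. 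Modulo this bookkeeping, the induction goes through cleanly.
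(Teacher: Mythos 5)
There is a genuine gap: your strengthened induction hypothesis is false as stated, so the single biconditional induction cannot go through. The problem is the right-to-left direction at the disjunction case. Your side condition ($X(z)\subseteq A$ for the free variables $z$) allows teams in which the $y$-column is \emph{correlated} with the other columns, and then a split of the $y$-deleted team $X'$ need \emph{not} lift to a $y$-value-preserving split of $X$ -- ``keeping the $y$-column intact'' is exactly the step that fails. Concretely, let $L=\{R\}$ with $R$ unary, $M=\{a,b,c\}$, $R^{\mathcal M}=\{a\}$, and $X=\{s_1,s_2\}$ with $s_1(x)=s_1(y)=a$ and $s_2(x)=s_2(y)=b$, so $A=X(y)=\{a,b\}$ and the side condition holds. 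For $\mu:=Rx\vee\neg Rx$ we have $\mathcal M\!\upharpoonright\!A\true_{X'}\mu$ (split $X'$ pointwise), but $\mathcal M\ntrue_X \mu\!\upharpoonright\!y$: the only split of $X$ satisfying the disjuncts is $Y=\{s_1\}$, $Y'=\{s_2\}$, and then $Y(y)=\{a\}\neq X(y)$, so the value-preservation clause of $\underset{y}{\vee}$ is violated, while neither disjunct holds in all of $X$. (The existential and universal quantifier cases are indeed unproblematic, as you say; it is only the disjunction that interacts badly with a correlated $y$-column.)

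This is precisely why the paper does not prove a biconditional for one class of teams. It proves two separate implications with asymmetric team shapes: (R1) states only ``if $\mathcal M\true_X\mu\!\upharpoonright\!y$ then $\mathcal M\!\upharpoonright\!X(y)\true_X\mu$'' for teams satisfying $X(z)\subseteq X(y)$, and (R2) states the converse direction ``if $\mathcal M\!\upharpoonright\!A\true_X\mu$ then $\mathcal M\true_{X[A/y]}\mu\!\upharpoonright\!y$'' where the team on the $\mathcal M$-side is the product team $X[A/y]$, whose $y$-column is all of $A$ attached to every assignment; with that shape every split lifts with $Y_1[A/y]$ and $Y_2[A/y]$, and preservation of $y$-values is automatic. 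The two implications are only combined at the very end, for the sentence $\varphi$, where by locality the team collapses to domain $\{y\}$ and is trivially of product form. To repair your proof you would have to strengthen the induction hypothesis in the converse direction to require the $y$-column to be decoupled from the rest of the team (every reduct extends in $X$ by every value of $A$), which in effect reproduces the paper's (R2). Also note a small slip: you write ``$y\notin\dom(X)$'' while simultaneously using $A:=X(y)$; you need $y\in\dom(X)$ (and only $y\notin\fr(\psi)$, which holds since $y\notin\vr(\varphi)$).
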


\begin{proof}
We first show that
\[
	\text{If }\mathcal{M}\true_X\mu\!\upharpoonright\!y, 
	\;\text{ then }\; \mathcal{M}\upharpoonright X(y)\true_X\mu, \tag{R1}
\]
for all $\mu\in\subf(\varphi)$ and teams $X$ for which the following condition holds:
\begin{equation*}
	X(z)\subseteq X(y) \text{ for all } z\in\dom(X). \tag{$\star$}
\end{equation*}
Note that if the condition ($\star$) would not hold, then $X$ would not be a team for the model $\mathcal{M}\upharpoonright X(y)$.
We prove the claim (R1) by induction on $\mu$:
\begin{itemize}[leftmargin=*]
\item If $\mu$ is a literal or inclusion/exclusion atom, then the claim holds trivially since $\mu\!\upharpoonright\!y = \mu$ and $X(z)\subseteq X(y)$ for all $z\in\vr(\mu)$.

\item The case $\mu = \psi\wedge\theta$ is straightforward to prove.


\item Let $\mu = \psi\vee\theta$.
Suppose first that $\mathcal{M}\true_X (\psi\vee\theta)\!\upharpoonright\!y$, i.e. $\mathcal{M}\true_X \psi\!\upharpoonright\!y\,\veebar\,\theta\!\upharpoonright\!y$. Thus there exist $Y_1,Y_2\subseteq X$ s.t. $Y_1\cup Y_2 = X$, $\mathcal{M}\true_{Y_1} \psi\!\upharpoonright\!y$ and $\mathcal{M}\true_{Y_2}\theta\!\upharpoonright\!y$, and if $Y_1,Y_2\neq\emptyset$, then $Y_1(y)=Y_2(y)=X(y)$. If $Y_1=\emptyset$, then the condition ($\star$) holds trivially for $Y_1$. Also if $Y_2=\emptyset$, then ($\star$) holds for $Y_1$ as $Y_1=X$. Suppose then that $Y_1,Y_2\neq\emptyset$, whence $Y_1(y)=X(y)$. Since $Y_1\subseteq X$, we have $Y_1(z)\subseteq X(z)\subseteq X(y)=Y_1(y)$ for all $z\in\dom(X)=\dom(Y_1)$. Thus ($\star$) holds for $Y_1$ in all cases. By an analogous argumentation ($\star$) holds for $Y_2$.

Suppose first that $Y_2=\emptyset$. Now $Y_1=X$ and thus $\mathcal{M}\true_X\psi\!\upharpoonright\!y$. By the inductive hypothesis  $\mathcal{M}\upharpoonright X(y)\true_X\psi$ and thus $\mathcal{M}\upharpoonright X(y)\true_X \psi\vee\theta$. The case when $Y_1=\emptyset$ is analogous. Suppose then that $Y_1,Y_2\neq\emptyset$, whence $Y_1(y)\!=\!Y_2(y)\!=\!X(y)$. By the inductive hypothesis $\mathcal{M}\upharpoonright Y_1(y)\true_{Y_1}\psi$. Since $Y_1(y)=X(y)$, we have $\mathcal{M}\upharpoonright X(y)\true_{Y_1}\psi$. By similar argumentation we have $\mathcal{M}\upharpoonright X(y)\true_{Y_2}\theta$ and thus $\mathcal{M}\upharpoonright X(y)\true_{X}\psi\vee\theta$.

\item Let $\mu = \Ee x\,\psi$.
Suppose $\mathcal{M}\true_X (\Ee x\,\psi)\!\upharpoonright\!y$, i.e. $\mathcal{M}\true_X (\Ee x\!\subseteq\!y)(\psi\!\upharpoonright\!y)$. Thus there exists $F:X\rightarrow\mathcal{P}^*(X(y))$ such that $\mathcal{M}\true_{X'}\psi\!\upharpoonright\!y$, where $X'=X[F/x]$. Since $X(z)\subseteq X(y)=X'(y)$ for all $z\in\dom(X)$ and $X'(x)\subseteq X(y)=X'(y)$, the condition $(\star)$ holds for the team $X'$. Thus, by the inductive hypothesis, $\mathcal{M}\upharpoonright X'(y)\true_{X'}\psi$. Since $X'(y)=X(y)$, we have $\mathcal{M}\upharpoonright X(y)\true_{X[F/x]}\psi$ and furthermore $\mathcal{M}\upharpoonright X(y)\true_X \Ee x\,\psi$.

\item Let $\mu = \Ae x\,\psi$.
Suppose $\mathcal{M}\true_X (\Ae x\,\psi)\!\upharpoonright\!y$, i.e. $\mathcal{M}\true_X (\Ae x\!\subseteq\!y)(\psi\!\upharpoonright\!y)$. Thus we have $\mathcal{M}\true_{X'}\psi\!\upharpoonright\!y$, where $X'=X[X(y)/x]$. Since $X(z)\subseteq X(y)=X'(y)$ for all $z\in\dom(X)$ and $X'(x)=X(y)=X'(y)$, the condition $(\star)$ holds for $X'$. Thus, by the inductive hypothesis, $\mathcal{M}\upharpoonright X'(y)\true_{X'}\psi$. Since $X'(y)=X(y)$, we have $\mathcal{M}\upharpoonright X(y)\true_{X[X(y)/x]}\psi$ and furthermore $\mathcal{M}\upharpoonright X(y)\true_X \Ae x\,\psi$.
\end{itemize}

\medskip

\noindent
We then show that if $A\subseteq M$ such that $\mathcal{M}\upharpoonright A$ is an $L$-model, then the following holds:
\[
	\text{If }\; \mathcal{M}\upharpoonright A\,\true_X\mu,
	\;\text{ then }\; \mathcal{M}\true_{X[A/y]}\mu\!\upharpoonright\!y, \tag{R2}
\]
for all $\mu\in\subf(\varphi)$ and teams $X$ for which $\dom(X)=\fr(\mu)$.
We prove this claim by induction on $\mu$:
\begin{itemize}[leftmargin=*]
\item Suppose that $\mu$ is a literal or inclusion/exclusion atom. Since $X$ is a team for the model $\mathcal{M}\!\upharpoonright\!A$, we must have $X(z)\subseteq A$ for all $z\in\dom(X)=\vr(\mu)$. We also have $\mu\!\upharpoonright\!y = \mu$ and $y\notin\fr(\mu)$, and thus the claim holds trivially.

\item The case $\mu = \psi\wedge\theta$ is straightforward to prove. 


\item Let $\mu = \psi\vee\theta$.
Suppose that $\mathcal{M}\upharpoonright A\true_{X}\psi\vee\theta$, i.e. there are $Y_1,Y_2\subseteq X$ such that $Y_1\cup Y_2 = X$, $\mathcal{M}\upharpoonright A\true_{Y_1}\psi$ and $\mathcal{M}\upharpoonright A\true_{Y_2}\theta$. Hence, by the inductive hypothesis and locality, $\mathcal{M}\true_{Y_1'}\psi\!\upharpoonright\!y$ and $\mathcal{M}\true_{Y_2'}\theta\!\upharpoonright\!y$, where $Y_1'=Y_1[A/y]$ and $Y_2'=Y_2[A/y]$. Now clearly $Y_1'\cup Y_2' = X[A/y]$ and if $Y_1',Y_2'\neq\emptyset$, then
\begin{align*}
	Y_1'(y)=Y_2'(y)=A=(X[A/y])(y).
\end{align*}
Thus we have $\mathcal{M}\true_{X[A/y]} \psi\!\upharpoonright\!y\,\veebar\,\theta\!\upharpoonright\!y$, i.e. $\mathcal{M}\true_{X[A/y]} (\psi\vee\theta)\!\upharpoonright\!y$.

\item Let $\mu = \Ee x\,\psi$.
Suppose $\mathcal{M}\upharpoonright A\true_X \Ee x\,\psi$, i.e. there is $F:X\rightarrow\mathcal{P}^*(A)$ such that $\mathcal{M}\upharpoonright A\true_{X'}\psi$, where $X'=X[F/x]$. Thus, by the inductive hypothesis and locality, $\mathcal{M}\true_{X'[A/y]}\psi\!\upharpoonright\!y$. Let
\begin{align*}
	&F':X[A/y]\rightarrow\mathcal{P}^*(A), \; s\mapsto F(s\!\upharpoonright\fr(\mu))
	\;\text{ and }\; X'' := (X[A/y])[F'/x].
\end{align*}
Note that $F'$ is well-defined since $\dom(X)=\fr(\mu)$ by the assumption. By the definition of $F'$, we have $X''=X'[A/y]$ and thus $\mathcal{M}\true_{X''}\psi\!\upharpoonright\!y$. We also have $\im(F')=\im(F)\subseteq\mathcal{P}^*(A)=\mathcal{P}^*((X[A/y])(y))$, and therefore $\mathcal{M}\true_{X[A/y]} (\Ee x\!\subseteq\!y)(\psi\!\upharpoonright\!y)$, i.e. $\mathcal{M}\true_ {X[A/y]} (\Ee x\,\psi)\!\upharpoonright\!y$.

\item Let $\mu = \Ae x\,\psi$.
Suppose that $\mathcal{M}\!\upharpoonright\!A\true_X \Ae x\,\psi$, i.e. $\mathcal{M}\!\upharpoonright\!A\true_{X'}\psi$, where $X'=X[A/x]$. By the inductive hypothesis and locality, $\mathcal{M}\true_{X'[A/y]}\psi\!\upharpoonright\!y$. Let $X''=(X[A/y])[A/x]$. Now $X''=X'[A/y]$, and thus we have $\mathcal{M}\!\upharpoonright\!A\true_{X''}\psi\!\upharpoonright\!y$. Since $(X[A/y])(y)=A$, it holds that $\mathcal{M}\true_{X[A/y]} (\Ae x\!\subseteq\!y)(\psi\!\upharpoonright\!y)$, i.e. $\mathcal{M}\true_{X[A/y]} (\Ae x\,\psi)\!\upharpoonright\!y$.
\end{itemize}

\medskip

\noindent
We are now ready to prove the claim of this proposition:
\[
	\mathcal{M}\true_X \varphi\!\upharpoonright\!y \quad\text{iff}\quad \mathcal{M}\upharpoonright X(y) \true\varphi.
\]

Suppose first that $\mathcal{M}\true_X \varphi\!\upharpoonright\!y$. By locality we have $\mathcal{M}\true_{X'}\varphi\!\upharpoonright\!y$, where $X'=X\upharpoonright\fr(\varphi\!\upharpoonright\!y)$. Since $\varphi$ is a sentence, $\dom(X')=\fr(\varphi\!\upharpoonright\!y)=\{y\}$, and thus the condition ($\star$) holds trivially for the team $X'$.  Hence by (R1) we have $\mathcal{M}\upharpoonright X'(y) \true_{X'}\varphi$. Since $X'(y)=X(y)$, we have $\mathcal{M}\upharpoonright X(y) \true_{X'}\varphi$ and thus by locality $\mathcal{M}\upharpoonright X(y) \true\varphi$.

Suppose then that $\mathcal{M}\upharpoonright X(y) \true\varphi$. Now by (R2), we have $\mathcal{M}\true_{\{\emptyset\}[X(y)/y]} \varphi\!\upharpoonright\!y$. Since $X\upharpoonright\{y\}=\{\emptyset\}[X(y)/y]$, we have $\mathcal{M}\true_{X\upharpoonright\{y\}} \varphi\!\upharpoonright\!y$. Since $\fr(\varphi\!\upharpoonright\!y)=\{y\}$, by locality $\mathcal{M}\true_X \varphi\!\upharpoonright\!y$.
\end{proof}

The relativization method gives us a simple way to express properties of certain sets of values in a team. We can apply the same technique for many other logics with team semantics if we extend them with unary inclusion and exclusion atoms. For example, there is a dependence logic sentence $\varphi$ which expresses that a model has even cardinality (\cite{Vaananen07}). Now the formula $\varphi\!\upharpoonright\!y$ expresses that the variable $y$ has even number of different values in a team. We will give more examples on this method in Section~\ref{sec: Examples}.


\section{The expressive power of INEX[$k$]}\label{sec: Translations}

In this section we will analyze the expressive power of INEX[$k$]. We first present translations from INC[$k$] and EXC[$k$] to ESO[$k$] and then combine them to form a translation from INEX[$k$] to ESO[$k$]. For the  other direction we show that any ESO[$k$]-formula, with at most $k$-ary free relation variables, can be expressed in INEX[$k$].


\subsection{Translation from INEX[$k$] to ESO[$k$]}\label{ssec: Expressing INEX}


For the language $\ESOset$ we also need a set of \emph{relation variables} which are symbols not in the vocabulary $L$. These relation variables can appear in atomic formulas similarly as relation symbols in $L$ and they can also be existentially quantified. We require all of these \emph{second order quantifiers} to appear in front of the $\ESOset$-formula, before its \emph{first order part}.

In the language $\ESOset[k]$ we only allow existential quantification of at most $k$-ary relation variables, but free relation variables in a formula may have any arity. Hence $\ESOset[0]$-fomulas are second order quantifier free, but may contain free relation variables.  If an $\ESOset$-formula $\Phi$ has free relation variables $R_1,\dots,R_n$, we can emphasize this by writing $\Phi$ as $\Phi(R_1\dots R_n)$. In this paper we will not consider ESO-formulas with free first order variables and thus their first order part can be seen as FO-sentence.
\footnote{To compare ESO-formula with free first order variables with INEX-formulas in a natural way, we would have to define team semantics also for ESO. But there are several possible ways to interpret second order quantifications in such semantics for ESO, and this topic is out of the scope of the current paper. }
After evaluating all second order quantifications, the truth of $\Phi$ in depends only on the first order part of $\Phi$. We may then apply team semantics for the first order part of $\Phi$ in any suitable model, whence flatness and locality properties hold as well.


Let $\mathcal{L}$ be any logic with team semantics and let $\varphi(\tuple y\,)$ be an $\mathcal{L}$-formula. The truth of $\varphi$ depends on a model $\mathcal{M}$ and a team $X$. If $\mathcal{L}$ is local, it is sufficient to consider the team $X\!\upharpoonright\!\vr(\tuple y)$ that is determined by the relation $X(\tuple y\,)$ (notice that the ordering of the variables in $\vec y$ here needs to be \emph{fixed}). Therefore it is natural to compare $\varphi$ with an ESO-formula $\Phi(R)$ and check whether the relations in $M$ that satisfy $\Phi$ correspond to the relations $X(\tuple y\,)$, where $X$ satisfies $\varphi$. Thus we say that $\varphi$ and $\Phi$ are equivalent if we have
\[
	\mathcal{M}\true_X\varphi\, \,\text{ iff }\, \mathcal{M}[X(\tuple y\,)/R]\true\Phi.
\]
The $\mathcal{L}$-formula $\varphi(\tuple y)$ defines a class of models and teams that satisfy it. If ESO-formula $\Phi(R)$ is equivalent with $\varphi$, it defines exactly the same models and teams by defining the relations that correspond to those teams.

\subsubsection*{Translation from EXC[$k$] to ESO[$k$]}


In the next theorem we formulate a translation from EXC[$k$] to ESO[$k$]. The idea of the proof is that we quantify a separate relation variable $P$ for each occurrence of an exclusion atom $\tuple{\vphantom\wedge\smash{t}}_1\mid\tuple{\vphantom\wedge\smash{t}}_2$. The values quantified for $P$ are the limit for the values that $\tuple{\vphantom\wedge\smash{t}}_1$ can get and $\tuple{\vphantom\wedge\smash{t}}_2$ cannot get, when $\tuple{\vphantom\wedge\smash{t}}_1\mid\tuple{\vphantom\wedge\smash{t}}_2$ is evaluated.

\begin{theorem}\label{the: Expressing EXC[k]-formula with ESO[k]}
Let $\varphi(\tuple y\,)\in\EXCset[k]$. Now there exists an $\ESOset[k]$-formula $\Phi(R)$, for which 
\[
	\mathcal{M}\true_X\varphi\; \text{ iff }\, \mathcal{M}[X(\tuple y\,)/R]\true\Phi.
\]
\end{theorem}

\begin{proof}
Without loss of generality we may assume that each exclusion atom in $\varphi$ is $k$-ary. We index these atoms by $(\tuple{\vphantom\wedge\smash{t}}_1\,|\,\tuple{\vphantom\wedge\smash{t}}_2)_1,\dots,(\tuple{\vphantom\wedge\smash{t}}_1\,|\,\tuple{\vphantom\wedge\smash{t}}_2)_n$. This is done so that each occurrence of an exclusion atom has a unique index. Let $P_1,\dots,P_n$ be $k$-ary relation variables.
Let $\psi\in\subf(\varphi)$. We define the formula $\psi'$ recursively:
\begin{align*}
	\psi' &= \psi\; \text{ if $\psi$ is a literal} \\
	((\tuple{\vphantom\wedge\smash{t}}_1\,|\,\tuple{\vphantom\wedge\smash{t}}_2)_i)' 
	&= P_i\tuple{\vphantom\wedge\smash{t}}_1\wedge\neg P_i\tuple{\vphantom\wedge\smash{t}}_2
		\;\text{ for each } i\leq n \\[0,1cm]
	(\psi\wedge\theta)' &= \psi'\!\wedge\theta', \;
	(\psi\vee\theta)' = \psi'\!\vee\theta' \\
	(\Es x\,\psi)' &= \Es x\,\psi', \;\;
	(\Ae x\,\psi)' = \Ae x\,\psi'.
\end{align*}
We can now define the formula $\Phi$ in the following way\footnote{If $\varphi$ is an $\EXCset$-sentence we define simply $\Phi := \Ee P_1\dots\Ee P_n\varphi'$.}:
\[
	\Phi := \Ee P_1\dots\Ee P_n\Ae\tuple{y}\,
	\bigl(\neg R\tuple{y}\vee(R\tuple{y}\wedge\varphi')\bigr).
\]
Clearly $\Phi$ is an $\ESOset[k]$-formula and $R$ is the only free relation variable in $\Phi$. We first need to prove the following claim.
\begin{claim}\label{star}
Let $\mu\in\subf(\varphi)$. Now the following holds for all suitable teams $X$:
\begin{align*}
	\mathcal{M}\true_X\mu\; &\text{ iff\, there exist } A_1,\dots,A_n\subseteq M^k \text{ such that } \mathcal{M}'\true_X\mu',
\end{align*}
where $\mathcal{M}':=\mathcal{M}[\tuple A/\tuple P]$ \,\emph{($=\mathcal{M}[A_1/P_1,\dots,A_n/P_n]$)}. 
\end{claim}
\noindent
We prove this claim by structural induction on $\mu$:
\begin{itemize}[leftmargin=*]
\item If $\mu$ is a literal we can set $A_i:=\emptyset$ for each $i\leq n$. Now the claim holds trivially since $\mu'=\mu$ and $P_i$ does not occur in $\mu$ for any $i\leq n$.

\item Let $\varphi = (\tuple{\vphantom\wedge\smash{t}}_1\mid\tuple{\vphantom\wedge\smash{t}}_2)_j$ for some $j\leq n$.
Suppose first that $\mathcal{M}\true_X\tuple{\vphantom\wedge\smash{t}}_1\mid\tuple{\vphantom\wedge\smash{t}}_2$. Let
\[
	\mathcal{M}' := \mathcal{M}[\tuple A/\tuple P], \text{ where }
	A_i :=
	\begin{cases}
		X(\tuple{\vphantom\wedge\smash{t}}_1)\, \text{ if } i=j \\
		\emptyset\qquad\, \text{ else}.  
	\end{cases}
	\;  (i\leq n)
\]
Because $X(\tuple{\vphantom\wedge\smash{t}}_1) = A_j = P_j^\mathcal{M'}$, we clearly have $\mathcal{M'}\true_{X}P_j\tuple{\vphantom\wedge\smash{t}}_1$.

For the sake of contradiction, suppose that there exists $s\in X$ for which $s(\tuple{\vphantom\wedge\smash{t}}_2)\in P_j^{\mathcal{M}'}$. Since $P_j^{\mathcal{M}'}=X(\tuple{\vphantom\wedge\smash{t}}_1)$, there exists $s'\in X$ s.t $s'(\tuple{\vphantom\wedge\smash{t}}_1) = s(\tuple{\vphantom\wedge\smash{t}}_2)$. But this is a contradiction since by the assumption $\mathcal{M}\true_X\,\tuple{\vphantom\wedge\smash{t}}_1\!\mid\!\tuple{\vphantom\wedge\smash{t}}_2$. Therefore $\mathcal{M'}\true_X\neg P_j\tuple{\vphantom\wedge\smash{t}}_2$ and thus  $\mathcal{M'}\true_X P_j\tuple{\vphantom\wedge\smash{t}}_1\wedge\neg P_j\tuple{\vphantom\wedge\smash{t}}_2$, i.e. $\mathcal{M'}\true_X((\tuple{\vphantom\wedge\smash{t}}_1\mid\tuple{\vphantom\wedge\smash{t}}_2)_j)'$.

\medskip

Suppose then that there exist $A_1,\dots,A_n\subseteq M^k$ s.t. $\mathcal{M'}\true_X((\tuple{\vphantom\wedge\smash{t}}_1\mid\tuple{\vphantom\wedge\smash{t}}_2)_j)'$. Hence $\mathcal{M'}\true_X P_j \tuple{\vphantom\wedge\smash{t}}_1$ and $\mathcal{M'}\true_X\neg P_j \tuple{\vphantom\wedge\smash{t}}_2$.
For the sake of contradiction, suppose that there are $s,s'\in X$ s.t. $s(\tuple{\vphantom\wedge\smash{t}}_1) = s'(\tuple{\vphantom\wedge\smash{t}}_2)$. Because $\mathcal{M'}\true_X P_j\tuple{\vphantom\wedge\smash{t}}_1$, we have $s(\tuple{\vphantom\wedge\smash{t}}_1)\in P_j^{\mathcal{M}'}$. But because $\mathcal{M'}\true_X\neg P_j\tuple{\vphantom\wedge\smash{t}}_2$, it has to be that $s(\tuple{\vphantom\wedge\smash{t}}_1)=s'(\tuple{\vphantom\wedge\smash{t}}_2)\notin P_j^{\mathcal{M}'}$. This is a contradiction, and thus  $\mathcal{M}\true_X\tuple{\vphantom\wedge\smash{t}}_1\mid\tuple{\vphantom\wedge\smash{t}}_2$.

\item Let $\mu = \psi\vee\theta$ \,(The case $\mu=\psi\wedge\theta$ can be proven similarly).
Suppose first that $\mathcal{M}\true_X\psi\vee\theta$. Thus there are $Y,Y'\subseteq X$ s.t. $Y\cup Y'=X$, $\mathcal{M}\true_Y\psi$ and $\mathcal{M}\true_{Y'}\theta$. By the inductive hypothesis there are $B_1,\dots,B_n\subseteq M^k$ and $B_1',\dots,B_n'\subseteq M^k$ s.t. $\mathcal{M}[\tuple B/\tuple P]\true_Y\psi'$ and $\mathcal{M}[\tuple B'/\tuple P]\true_{Y'}\theta'$. Let
\begin{align*}
	\mathcal{M}' &:= \mathcal{M}[\tuple A/\tuple P], \text{ where }
	A_i :=
	\begin{cases}
		B_i\; \text{ if $P_i$ occurs in $\psi'$} \\
		B_i'\; \text{ if $P_i$ does not occur in $\psi'$}.  
	\end{cases}
\end{align*}
Since none of $P_i$ can occur in both $\psi'$ and $\theta'$, we have $\mathcal{M}'\true_{Y}\psi'$ and $\mathcal{M}'\true_{Y'}\theta'$. Hence $\mathcal{M}'\true_{X}\psi'\vee\theta'$, i.e. $\mathcal{M}'\true_{X}(\psi\vee\theta)'$.

\smallskip
Suppose then that there are $A_1,\dots,A_n\subseteq M^k$ s.t. $\mathcal{M}'\true_{X}(\psi\vee\theta)'$. Thus $\mathcal{M}'\true_{X}\psi'\vee\theta'$, i.e. there are $Y,Y'\subseteq X$ s.t. $Y\cup Y'=X$, $\mathcal{M}'\true_Y\psi'$ and $\mathcal{M}'\true_{Y'}\theta'$. By the inductive hypothesis $\mathcal{M}\true_Y\psi$ and $\mathcal{M}\true_{Y'}\theta$, i.e. $\mathcal{M}\true_X\psi\vee\theta$.

\item The cases $\mu = \Es x\,\psi$ and $\mu = \Ae x\,\psi$ are straightforward to prove.
\end{itemize}
\noindent
Let $\mathcal{M}'=\mathcal{M}[\tuple A/\tuple P]$ for some $A_1,\dots,A_n\subseteq M^k$. Since $\fr(\varphi')=\vr(\tuple y)$, by locality it is easy to see that the following holds for all suitable teams $X$:
\[
	\mathcal{M}'\true_X\varphi'\;\text{ iff }\, 
	\mathcal{M}'[X(\tuple y\,)/R]\true\Ae\tuple{y}\,\bigl(\neg R\tuple{y}\vee(R\tuple{y}\wedge\varphi')\bigr).
\]
By combining this with the result of Claim \ref{star}, we obtain:
\begin{align*}
	\mathcal{M}\true_X\varphi \;\text{ iff }\;
	&\text{there are } A_1,\dots,A_n\subseteq M^k \\
	&\text{ s.t. }\mathcal{M}[\tuple A/\tuple P,X(\tuple y\,)/R]\true
	\Ae\tuple{y}\,\bigl(\neg R\tuple{y}\vee(R\tuple{y}\wedge\varphi')\bigr).
\end{align*}
Equivalently: \; $\mathcal{M}\true_X\varphi$ iff $\mathcal{M}[X(\tuple y\,)/R]\true\Phi$.
\end{proof}

\subsubsection*{Translation from INC[$k$] to ESO[$k$]}


In the next theorem we present a translation from INC[$k$] to ESO[$k$]. Again the idea is that we quantify a separate predicate symbol $P$ for each inclusion atom $\tuple{\vphantom\wedge\smash{t}}_1\inc\tuple{\vphantom\wedge\smash{t}}_2$, and the values of $\tuple{\vphantom\wedge\smash{t}}_1$ must be included in the values chosen for $P$. But we must also show that each value of~$P$ is a value that tuple $\tuple{\vphantom\wedge\smash{t}}_2$ gets in the team when $\tuple{\vphantom\wedge\smash{t}}_1\inc\tuple{\vphantom\wedge\smash{t}}_2$ is evaluated. For this we need special formulas, $\varphi_i'(\tuple u)$, which ``find'' the assignment that gets same values for $\tuple u$ and $\tuple{\vphantom\wedge\smash{t}}_2$ -- for any value of $\tuple u$ that is in the values chosen for $P$. 

\begin{theorem}\label{the: Expressing INC[k]-formula with ESO[k]}
Let $\varphi(\tuple y\,)$, where $\tuple y = y_1\dots y_m$, be an $\INCset[k]$-formula. Then there exists an $\ESOset[k]$-formula $\Phi(R)$, for which we have
\[
	\mathcal{M}\true_X\varphi\; \text{ iff }\, \mathcal{M}[X(\tuple y\,)/R]\true\Phi.
\]
\end{theorem}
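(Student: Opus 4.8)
The plan is to follow the blueprint of the proof of Theorem~\ref{the: Expressing EXC[k]-formula with ESO[k]}, replacing the treatment of exclusion atoms by a considerably more involved treatment of inclusion atoms. As there, I would index the inclusion atoms occurring in $\varphi$ as $(\tuple t_1\inc\tuple t_2)_1,\dots,(\tuple t_1\inc\tuple t_2)_n$ so that every occurrence has a unique index, introduce a fresh $k$-ary relation variable $P_i$ for each, and define recursively a first-order formula $\psi'$ for every $\psi\in\subf(\varphi)$, with the clauses for literals, $\wedge$, $\vee$ and the two quantifiers exactly as in the exclusion case. The target formula is again $\Phi:=\Ee P_1\dots\Ee P_n\,\Ae\tuple y\,\bigl(\neg R\tuple y\vee(R\tuple y\wedge\varphi')\bigr)$ (with $\Phi:=\Ee P_1\dots\Ee P_n\varphi'$ if $\varphi$ is a sentence), and the core of the argument is again a claim of the form: for every $\mu\in\subf(\varphi)$ and every suitable team $X$, $\mathcal{M}\true_X\mu$ iff there are $A_1,\dots,A_n\subseteq M^k$ with $\mathcal{M}[\tuple A/\tuple P]\true_X\mu'$, to be proved by structural induction on $\mu$. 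Once this claim is established the passage to $\Phi$ is identical to the exclusion case: using locality and $\fr(\varphi')=\vr(\tuple y)$ one checks $\mathcal{M}[\tuple A/\tuple P]\true_X\varphi'$ iff $\mathcal{M}[\tuple A/\tuple P,X(\tuple y)/R]\true\Ae\tuple y(\neg R\tuple y\vee(R\tuple y\wedge\varphi'))$ and combines this with the claim.

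The new content is entirely in the clause for an inclusion atom $(\tuple t_1\inc\tuple t_2)_i$. In the team $Y$ in which this atom is evaluated the truth condition is $Y(\tuple t_1)\subseteq Y(\tuple t_2)$, and I would encode this by asserting two things: first, that every value of $\tuple t_1$ in $Y$ belongs to $P_i$, which is the flat conjunct $P_i\tuple t_1$; and second, that conversely every tuple in $P_i$ is attained as a value of $\tuple t_2$ by some assignment of $Y$. Taking $P_i:=Y(\tuple t_1)$ in the ``only if'' direction, the conjunction of these two conditions is equivalent to $Y(\tuple t_1)\subseteq Y(\tuple t_2)$. The second condition I would express by a subformula of the shape $\Ae\tuple u\,(\neg P_i\tuple u\vee\varphi_i'(\tuple u))$, where $\tuple u$ is a fresh $k$-tuple (so that $\tuple u$ effectively ranges over $P_i$) and $\varphi_i'(\tuple u)$ is an auxiliary first-order formula whose job is, for a value $\tuple a$ of $\tuple u$ drawn from $P_i$, to ``locate'' an assignment realizing $\tuple a$ as the value of $\tuple t_2$. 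Since $\tuple t_2$ in general contains variables quantified on the branch of $\varphi$ leading to the occurrence of atom $i$, the formula $\varphi_i'(\tuple u)$ must be built along that branch: it reintroduces, via $R$, an assignment of the original team and replays the quantifiers and disjunctions of $\varphi$ on that branch so as to reconstruct, from $R=X(\tuple y)$ and the guessed relations $\tuple A$ (in particular the earlier $P_j$'s), exactly the set of $\tuple t_2$-values available in $Y$. Thus the $\varphi_i'$ are really defined by a simultaneous recursion with the $\psi'$.

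The hard part will be precisely the definition and verification of the auxiliary formulas $\varphi_i'(\tuple u)$. The property ``$P_i\subseteq Y(\tuple t_2)$'' is not flat: it depends on the whole subteam reached at atom $i$, hence on all the Skolem choices and team splittings made above it, whereas the translated formula may use only first-order syntax over $L\cup\{R,P_1,\dots,P_n\}$, may not reuse inclusion atoms, and must keep every quantified relation $k$-ary even though the set of free variables at the atom can be far larger than $k$. The two directions of the key claim for the inclusion-atom case, and the inductive cases for $\vee$ and for the quantifiers --- where the interaction between the branch being replayed and the current split must be tracked carefully so that the replayed branch recovers neither too few nor too many $\tuple t_2$-values --- are what make the proof long; the literal, $\wedge$, and $\Es$/$\Ae$ clauses go essentially as in the exclusion proof. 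I would therefore carry out the full induction in an appendix, isolating the inclusion-atom and the disjunction/quantifier steps as the delicate points.
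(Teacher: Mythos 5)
Your construction is in essence the paper's: translate the $i$th inclusion atom to $P_i\tuple t_1$, and add for each $i$ a conjunct $\Ae\tuple u\,\bigl(\neg P_i\tuple u\vee\Ee\tuple y\,(R\tuple y\wedge\varphi_i'(\tuple u))\bigr)$ in which the auxiliary formula $\varphi_i'$, defined by simultaneous recursion with $\psi'$, re-enters the original team through $R$ and replays the quantifiers and disjunctions on the branch leading to atom $i$ (in the paper: $((\tuple t_1\subseteq\tuple t_2)_i)_i'=(\tuple u=\tuple t_2)\wedge P_i\tuple t_1$, disjunction keeps only the disjunct containing atom $i$, and $(\Ae x\,\psi)_i'=\Es x\,\psi_i'\wedge\Ae x\,\psi'$, i.e.\ the universal quantifier is replayed existentially while the flat part is retained). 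A small inconsistency: you first declare $\Phi$ to be literally the exclusion-case formula and only later add the $\Ae\tuple u$ conjuncts; the latter version is of course the one you need.

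The genuine gap is the inductive invariant. The claim you say carries the induction --- $\mathcal{M}\true_X\mu$ iff there are $A_1,\dots,A_n$ with $\mathcal{M}[\tuple A/\tuple P]\true_X\mu'$ --- fails already at the base case: for $\mu=(\tuple t_1\subseteq\tuple t_2)_i$ one may always take $A_i:=X(\tuple t_1)$ and make $P_i\tuple t_1$ true in $X$ whether or not the inclusion atom holds, so the right-to-left direction is false; and the correcting conjuncts of $\Phi$ speak only about the top-level team via $R$, so they cannot be invoked subformula-by-subformula unless they are built into the induction hypothesis itself. The paper's key claim does exactly that: $\mathcal{M}\true_X\mu$ iff there exist $A_1,\dots,A_n$ such that $\mathcal{M}[\tuple A/\tuple P]\true_X\mu'$ \emph{and} for every $i\le n$ and every $\tuple a\in A_i$ there is $s\in X$ with $\mathcal{M}[\tuple A/\tuple P]\true_{\{s[\tuple a/\tuple u\,]\}}\mu_i'$. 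This coupled, single-assignment witness condition is what actually propagates through $\vee$, $\Es$ and $\Ae$, and verifying it requires two auxiliary lemmas your plan does not anticipate: a triviality lemma for indices whose atom does not occur in $\mu$ (so that $\mu'$, $\mu_i'$ and their single-assignment versions coincide), and a merging lemma stating that if $\mathcal{M}\true_{X_1}\mu'$ and $\mathcal{M}\true_{X_2}\mu_i'$ then $\mathcal{M}\true_{X_1\cup(X_2\upharpoonright\dom(X_1))}\mu'$, which is what lets the converse directions of the disjunction and existential cases absorb the located witnesses back into the team splittings and choice functions. So you have named the right formula, but the statement whose induction does the work is left unformulated, and the version you do state would not survive the atom case.
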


\begin{proof}
Without loss of generality we may assume that each inclusion atom in $\varphi$ is $k$-ary. We index these atoms by $(\tuple{\vphantom\wedge\smash{t}}_1\subseteq\tuple{\vphantom\wedge\smash{t}}_2)_1,\dots,(\tuple{\vphantom\wedge\smash{t}}_1\subseteq\tuple{\vphantom\wedge\smash{t}}_2)_n$. Let $\tuple u$ be a $k$-tuple of fresh variables and $P_1,\dots,P_n$ be $k$-ary relation variables. 

\medskip

\noindent
Let $\psi\in\subf(\varphi)$. We define the formula $\psi'$ recursively:
\begin{align*}
	(\psi)' &= \psi\; \text{ if $\psi$ is a literal} \\
	((\tuple{\vphantom\wedge\smash{t}}_1\subseteq\tuple{\vphantom\wedge\smash{t}}_2)_i)' 
	&= P_i\tuple{\vphantom\wedge\smash{t}}_1 \;\text{ for each } i\leq n \hspace{2,85cm} \\[0,1cm]
	(\psi\wedge\theta)' &= \psi'\!\wedge\theta', \;
	(\psi\vee\theta)' = \psi'\!\vee\theta' \\
	(\Es x\,\psi)' &= \Es x\,\psi', \;\;
	(\Ae x\,\psi)' = \Ae x\,\psi'.
\end{align*}
Formulas $\psi_i'$ are defined recursively for all $i\leq n$:
\begin{align*}
	(\psi)_i' &= \psi\; \text{ if $\psi$ is a literal} \\
	((\tuple{\vphantom\wedge\smash{t}}_1\subseteq\tuple{\vphantom\wedge\smash{t}}_2)_j)_i' &= P_j\tuple{\vphantom\wedge\smash{t}}_1\; \text{ if } j\neq i \\
	((\tuple{\vphantom\wedge\smash{t}}_1\subseteq\tuple{\vphantom\wedge\smash{t}}_2)_i)_i' 
		&= (\tuple u = \tuple{\vphantom\wedge\smash{t}}_2)\wedge P_i\tuple{\vphantom\wedge\smash{t}}_1 \\
	(\psi\wedge\theta)_i' &= \psi_i'\wedge\theta_i' \\
	(\psi\vee\theta)_i' &= 
	\begin{cases}
		\psi_i' \qquad\;\, \text{if $(\tuple{\vphantom\wedge\smash{t}}_1\subseteq\tuple{\vphantom\wedge\smash{t}}_2)_i$ occurs in $\psi$} \\
		\theta_i' \qquad\;\text{ if $(\tuple{\vphantom\wedge\smash{t}}_1\subseteq\tuple{\vphantom\wedge\smash{t}}_2)_i$ occurs in $\theta$} \\
		\psi_i'\vee\theta_i'\, \text{ else} \\
	\end{cases} \\
	(\Es x\,\psi)_i' &= \Es x\,\psi_i' \\
	(\Ae x\,\psi)_i' & = \Es x\,\psi_i'\wedge\Ae x\,\psi'.
\end{align*}
Note that the cases of disjunction above are exclusive, since for each $i\leq n$ the inclusion atom $(\tuple{\vphantom\wedge\smash{t}}_1\subseteq\tuple{\vphantom\wedge\smash{t}}_2)_i$ can occur in at most one of the disjuncts.

\medskip

\noindent
We can now define the formula $\Phi$ in the following way\footnote{If $\varphi$ is an $\INCset$-sentence we can define $\Phi := \Ee P_1\dots\Ee P_n\bigl(\varphi'\wedge\bigwedge_{i\leq n}\Ae\tuple u\,(\neg P_i\tuple u\,\vee\varphi_i'(\tuple u))\bigr).$}:
\[
	\Phi := \Ee P_1\dots\Ee P_n\bigl(\Ae\tuple y\,\bigl(\neg R\tuple y\,\vee(R\tuple y\wedge\varphi')\bigr)
	\wedge\bigwedge_{i\leq n}\Ae\tuple u\,\bigl(\neg P_i\tuple u\,\vee
	\Ee\tuple y\,(R\tuple y\wedge\varphi_i'(\tuple u))\bigr)\bigr).
\]
Clearly $\Phi$ is an $\ESOset[k]$-formula and $R$ is the only free relation variable in $\Phi$. To complete the proof, we need to prove the following claim which demonstrates the relevance of the formulas $\varphi_i'$:

\begin{claim}\label{AT3}
The following holds for all $\mu\in\subf(\varphi)$ and all suitable teams $X$:
\begin{align*}
	\mathcal{M}\true_X\mu\, \;\text{ iff }\;\,&\text{there exist }
	A_1,\dots,A_n\subseteq M^k \text{ s.t. } \mathcal{M}[\tuple A/\tuple P]\true_X\mu', \\
	&\text{and for any } i\leq n \text{ and tuple of elements } \tuple a\in A_i \\[-0,1cm]
	&\text{there exists } s\in X \text{ s.t. } \mathcal{M}[\tuple A/\tuple P]\true_{\{s[\tuple a/\tuple u\,]\}}\mu_i'.
\end{align*}
\end{claim}
\noindent
We present a proof for this claim in the appendix. 

Note that since $\mu'$ and $\mu_i'$ do not contain the relation variable $R$, we can replace the model $\mathcal{M}[\tuple A/\tuple P]$ with the model $\mathcal{M}[\tuple A/\tuple P,X(\tuple y\,)/R]$ in Claim \ref{AT3}. 
The existence of $s\in\!X$ for each tuple $\tuple a\in A_i$ such that $s[\tuple a/\tuple u\,]$ satisfies $\varphi_i'$, guarantees that the values in~$A_i$ are included in the values $\tuple{\vphantom\wedge\smash{t}}_2$ in the team when the inclusion atom $(\tuple{\vphantom\wedge\smash{t}}_1\subseteq\tuple{\vphantom\wedge\smash{t}}_2)_i$ is evaluated. With this result we can prove the claim of this theorem; that is \, $\mathcal{M}\true_X\varphi\; \text{ iff }\, \mathcal{M}[X(\tuple y\,)/R]\true\Phi$.

\smallskip

Suppose first that $\mathcal{M}\true_X\varphi$. Now by Claim \ref{AT3} there are $A_1,\dots,A_n\subseteq M^k$ s.t. $\mathcal{M}'\true_X\varphi'$, and for all $i\leq n$ and $\tuple a\in A_i$ there exists $s\in X$ such that $\mathcal{M}'\true_{\{s[\tuple a/\tuple u\,]\}}\varphi_i'$, where $\mathcal{M}'=\mathcal{M}[\tuple A/\tuple P,X(\tuple y\,)/R]$. 
Let $j\leq n$ and let
\[
		Y:=\{r\in\{\emptyset\}[M^k/\tuple u\,]\mid r(\tuple u)\notin A_j\} \;\text{ and }\;
		Y':=\{r\in\{\emptyset\}[M^k/\tuple u\,]\mid r(\tuple u)\in A_j\}.
\]
Now $Y\cup Y'=\{\emptyset\}[M^k/\tuple u\,]$ and because $A_j=P_j^{\mathcal{M}'}$, clearly $\mathcal{M'}\true_Y \neg P_j\tuple u$. By the definition of $Y'$, we have $r(\tuple u)\in A_j$ for each $r\in Y'$. Hence, by applying the result of Claim \ref{AT3} for the  values $r(\tuple u)$, the following holds: for each $r\in Y'$ there exists $s_r\in X$ s.t. $\mathcal{M}'\true_{\{s_r[r(\tuple u)/\tuple u\,]\}}\varphi_j'$. 

Let $\mathcal{F}:Y'\rightarrow\mathcal{P}^*(M^m) \,\text{ s.t. } r\mapsto \{s_r(\tuple y)\}$.
Now $r[s_r(\tuple y)/\tuple y\,] = s_r[r(\tuple u)/\tuple u\,]$ for each $r\in Y'$ and thus $\mathcal{M}'\true_{\{r[s_r(\tuple y)/\tuple y\,]\}}\varphi_j'$ for each $r\in Y'$.
Hence by locality and flatness $\mathcal{M}'\true_{Y'[\mathcal{F}/\tuple y\,]}\varphi_j'$. Because $s_r(\tuple y)\in X(\tuple y\,)=R^\mathcal{M'}$ for each $r\in Y'$, by flatness we also have $\mathcal{M}'\true_{Y'[\mathcal{F}/\tuple y\,]}R\tuple y$ and thus $\mathcal{M}'\true_{Y'}\Ee\tuple y\,(R\tuple y\wedge\varphi_j')$. Therefore $\mathcal{M}'\true_{\{\emptyset\}[M^k/\tuple u\,]}\neg P_j\tuple u\vee\Ee\tuple y\,(R\tuple y\wedge\varphi_j')$ and thus $\mathcal{M}'\true\bigwedge_{i\leq n}\Ae\tuple u\,(\neg P_i\tuple u\,\vee\Ee\tuple y\,(R\tuple y\wedge\varphi_i'))$. 

Because $\mathcal{M}'\true_X\varphi'$ and $X(\tuple y\,)=R^\mathcal{M'}$, by locality $\mathcal{M'}\true\Ae\tuple y\,(\neg R\tuple y\,\vee(R\tuple y\wedge\varphi'))$. Therefore we can conclude that $\mathcal{M}[X(\tuple y\,)/R]\true\Phi$.

\medskip

\noindent
Suppose then that $\mathcal{M}[X(\tuple y\,)/R]\true\Phi$. Thus there exist $A_1,\dots,A_n\subseteq M^k$ such that the first order part of $\Phi$ holds in $\mathcal{M'}:=\mathcal{M}[\tuple A/\tuple P,X(\tuple y\,)/R]$. In particular, $\mathcal{M'}\true\Ae\tuple y\,(\neg R\tuple y\vee(R\tuple y\wedge\varphi'))$ and thus by locality $\mathcal{M}'\true_X\varphi'$.

For the sake of proving the right side of the equivalence of Claim~\ref{AT3}, let $j\leq n$ and $\tuple a\in A_j$. Now $\mathcal{M}'\true\Ae\tuple u\,(\neg P_j\tuple u\vee\Ee\tuple y\,(R\tuple y\wedge\varphi_j'))$, and thus there are $Y,Y'\subseteq \{\emptyset\}[M^k/\tuple u\,]$ such that $Y\cup Y'=\{\emptyset\}[M^k/\tuple u\,]$, $\mathcal{M'}\true_Y \neg P_j\tuple u$ and $\mathcal{M}'\true_{Y'}\Ee\tuple y\,(R\tuple y\wedge\varphi_j')$. Hence there is a function $\mathcal{F}:Y'\rightarrow\mathcal{P}^*(M^m)$ such that $\mathcal{M'}\true_{Y'[\mathcal{F}/\tuple y\,]}R\tuple y\wedge\varphi_j'$.

Let $r:=\emptyset[\tuple a/\tuple u\,]$, whence $r\in\{\emptyset\}[M^k/\tuple u\,]$. Since $r(\tuple u)=\tuple a\in A_j = P_j^\mathcal{M'}$ and $\mathcal{M'}\true_Y \neg P_j\tuple u$, we have $r\notin Y$ and thus $r\in Y'$. Let $\tuple{\vphantom t\smash{b}}\in\mathcal{F}(r)$ and let $s:=r[\tuple{\vphantom t\smash{b}}/\tuple y\,]$. By flatness, $\mathcal{M'}\true_{\{s\}}R\tuple y$\, and thus $s(\tuple y)\in R^\mathcal{M'}\!=X(\tuple y\,)$. Hence there exists $s'\in X$ such that $s'(\tuple y)=s(\tuple y)$. Since $\mathcal{M'}\true_{\{s\}}\varphi_j'$, by locality also $\mathcal{M'}\true_{\{s'[s(\tuple u)/\tuple u\,]\}}\varphi_j'$. Because $s(\tuple u)=r(\tuple u)=\tuple a$, by Claim \ref{AT3} we have $\mathcal{M}\true_X\varphi$.
\end{proof}

\subsubsection*{Forming a translation from INEX[$k$] to ESO[$k$]}


The next theorem shows that there is also a translation from INEX[$k$] to ESO[$k$]. This translation can be formulated by first eliminating exclusion atoms as in Theorem \ref{the: Expressing EXC[k]-formula with ESO[k]} and then inclusion atoms as in Theorem \ref{the: Expressing INC[k]-formula with ESO[k]}.

\begin{theorem}\label{the: Expressing INEX[k]-formula with ESO[k]}
Let $\varphi(\tuple y\,)\in\INEXset[k]$. Now there is an $\ESOset[k]$-formula $\Phi(R)$, for which we have
\[
	\mathcal{M}\true_X\varphi\; \text{ iff }\; \mathcal{M}[X(\tuple y\,)/R]\true\Phi.
\]
\end{theorem}

\begin{proof}
Without loss of generality we may assume that each exclusion and inclusion atom in the formula $\varphi$ is $k$-ary. We~index the exclusion atoms by $(\tuple{\vphantom\wedge\smash{t}}_1\,|\,\tuple{\vphantom\wedge\smash{t}}_2)_1,\dots,(\tuple{\vphantom\wedge\smash{t}}_1\,|\,\tuple{\vphantom\wedge\smash{t}}_2)_n$. Let $P_1,\dots P_n$ be $k$-ary relation variables.
Let $\psi\in\subf(\varphi)$. We define the formula $\psi'$ recursively as follows.
\begin{align*}
	\psi' &= \psi\; \text{ if $\psi$ is a literal} \\
	((\tuple{\vphantom\wedge\smash{t}}_1\,|\,\tuple{\vphantom\wedge\smash{t}}_2)_i)' 
	&= P_i\tuple{\vphantom\wedge\smash{t}}_1\wedge\neg P_i\tuple{\vphantom\wedge\smash{t}}_2
		\;\text{ for each } i\leq n \\
	(\tuple{\vphantom\wedge\smash{t}}_1\subseteq\tuple{\vphantom\wedge\smash{t}}_2)' 
	&= \tuple{\vphantom\wedge\smash{t}}_1\subseteq\tuple{\vphantom\wedge\smash{t}}_2 \\[0,1cm]
	(\psi\wedge\theta)' &= \psi'\!\wedge\theta', \;
	(\psi\vee\theta)' = \psi'\!\vee\theta' \\
	(\Es x\,\psi)' &= \Es x\,\psi', \;\;
	(\Ae x\,\psi)' = \Ae x\,\psi'.
\end{align*}
We can prove the equivalence of Claim \ref{star} for any $\mu\in\subf(\varphi)$ by structural induction on $\mu$: Since inclusion atoms are left as they are, their step in the induction is trivial. Other steps can be proven identically as in the proof of Claim \ref{star} within the proof of Theorem \ref{the: Expressing EXC[k]-formula with ESO[k]}. Thus we have
\[
	\mathcal{M}\true_X\varphi\, \text{ iff\, there exist } A_1,\dots,A_n\subseteq M^k
	\text{ s.t. }\mathcal{M}[\tuple A/\tuple P]\true_X\varphi'.
\]
Since $\varphi'$ contains only inclusion atoms and $\fr(\varphi)=\fr(\varphi')=\vr(\tuple y)$, we can apply Theorem \ref{the: Expressing INC[k]-formula with ESO[k]} for $\varphi'$ to get an $\ESOset$-formula $\Psi(R)$ for which we have
\[
	\mathcal{M}\true_X\varphi'\; \text{ iff }\, \mathcal{M}[X(\tuple y)/R]\true\Psi.
\]
We can now define $\Phi:=\Ee P_1\dots\Ee P_n\,\Psi$, whence $\Phi$ is an $\ESOset$-formula with the free relation variable $R$. Then we have
\begin{align*}
	\mathcal{M}\true_X\varphi \;
	&\text{ iff } \text{ there exist } A_1,\dots A_n\subseteq M^k
	\,\text{ s.t. }\mathcal{M}[\tuple A/\tuple P]\true_X\varphi' \\
	&\text{ iff } \text{ there exist } A_1,\dots A_n\subseteq M^k
	\,\text{ s.t. }\mathcal{M}[\tuple A/\tuple P,X(\tuple y)/R]\true\Psi \\
	&\text{ iff }\; \mathcal{M}[X(\tuple y)/R]\true\Phi.
\end{align*}
\text{}\vspace{-1.4cm}

\end{proof}
\noindent
The result of Theorem \ref{the: Expressing INEX[k]-formula with ESO[k]} can be formulated equivalently as follows: \\
All INEX[$k$]-definable properties of teams are ESO[$k$]-definable.


\subsection{Translation from ESO[$k$] to INEX[$k$]}\label{ssec: Expressing ESO}


When translating from ESO to INEX, our technique is to simulate second order quantification by replacing the quantifications of $k$-ary relation variables $P$ simply with quantifications of $k$-tuples of \emph{first order variables} $\tuple w$. The idea is then to choose such values for $\tuple w$ that in the resulting team $X$, the relation $X(\tuple w)$ is the same as the relation that is quantified for the value of $P$. 
However, we cannot simulate the quantification of the empty set this way, since the first order variables must be given at least one value. But this problem can be avoided, since any ESO-formula $\Phi$ can be written in an equivalent form $\Phi'$ which is satisfied if and only if it is satisfied with nonempty interpretations for the quantified relation variables.
This is shown in the following easy lemma.

\begin{lemma}\label{the: Non-emptiness lemma}
Let $\Phi:=\Ee P_1\dots\Ee P_n\,\gamma$ be an $\ESOset[k]$-formula, where $\gamma$ is the first order part of $\Phi$. Then there exists $\delta\in\ESOset[0]$ with the same free relation variables as $\gamma$ such that the following holds:
\[
	\mathcal{M}\true \Phi\;\text{ iff \,there exist nonempty }
	A_1,\dots,A_n\subseteq M^k \text{ s.t. } \mathcal{M}[\tuple A/\tuple P]\true\delta.
\]
\end{lemma}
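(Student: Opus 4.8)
The plan is to show that whenever $\Phi$ is satisfied with some interpretations $A_i$ for the relation variables $P_i$, it is in fact satisfied with \emph{nonempty} interpretations, by a syntactic trick that encodes "this $P_i$ might have been empty" inside the first-order part. The key observation is that replacing $P_i$ by $P_i\cup\{\tuple a\}$ for a single fresh tuple $\tuple a$ only changes the truth of atoms $P_i\tuple{\vphantom\wedge\smash{t}}$ and $\neg P_i\tuple{\vphantom\wedge\smash{t}}$, so if we guard those atoms appropriately we can force every $A_i$ to be nonempty while still recovering the original behaviour of the empty relation. Concretely, I would first put the first-order part $\gamma$ of $\Phi$ into negation normal form so that each $P_i$ occurs only as $P_i\tuple{\vphantom\wedge\smash{t}}$ or $\neg P_i\tuple{\vphantom\wedge\smash{t}}$ (this is standard and costs nothing). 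Then for each $i\leq n$ I introduce $k$ fresh first-order variables forming a tuple $\tuple a_i$, existentially quantified at the front of $\gamma$, together with a fresh $0$-ary relation variable $Q_i$ (a propositional symbol) whose intended reading is "$P_i$ should really be empty". The new first-order part $\gamma^*$ is obtained from $\gamma$ by replacing each atom $P_i\tuple{\vphantom\wedge\smash{t}}$ with $\bigl(\neg Q_i \wedge P_i\tuple{\vphantom\wedge\smash{t}} \wedge \tuple{\vphantom\wedge\smash{t}}\neq\tuple a_i\bigr)$ and each atom $\neg P_i\tuple{\vphantom\wedge\smash{t}}$ with $\bigl(Q_i \vee \neg P_i\tuple{\vphantom\wedge\smash{t}} \vee \tuple{\vphantom\wedge\smash{t}} = \tuple a_i\bigr)$; finally set $\delta := \Es Q_1\dots\Es Q_n\,\Ee\tuple a_1\dots\Ee\tuple a_n\,\gamma^*$, which lies in $\ESOset[0]$ since $Q_i$ are $0$-ary and $\tuple a_i$ are first-order, and $\delta$ has the same free relation variables as $\gamma$.

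The second step is to verify the claimed equivalence. For the direction from $\Phi$ to $\delta$: suppose $\mathcal{M}\true\Phi$, so there are $A_1,\dots,A_n\subseteq M^k$ with $\mathcal{M}[\tuple A/\tuple P]\true\gamma$. Pick any $\tuple a_i\in M^k$; set $B_i := A_i\cup\{\tuple a_i\}$ (nonempty), and interpret $Q_i$ as true iff $A_i=\emptyset$. One checks, by a routine induction on subformulas of $\gamma$, that with these choices $P_i\tuple{\vphantom\wedge\smash{t}}$ evaluated against $B_i$-together-with-the-guard behaves exactly as $P_i\tuple{\vphantom\wedge\smash{t}}$ against $A_i$: if $Q_i$ is true then $A_i=\emptyset$ and the guard $\neg Q_i$ kills the positive atom (matching $\tuple{\vphantom\wedge\smash{t}}\notin\emptyset$) while $Q_i$ makes the negative atom trivially true (matching $\tuple{\vphantom\wedge\smash{t}}\notin\emptyset$); if $Q_i$ is false then $A_i\neq\emptyset$, we may take $\tuple a_i\in A_i$ so that $B_i=A_i$ and the clauses $\tuple{\vphantom\wedge\smash{t}}\neq\tuple a_i$, $\tuple{\vphantom\wedge\smash{t}}=\tuple a_i$ are subsumed. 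Hence $\mathcal{M}[\tuple B/\tuple P]\true\gamma^*$ under these $Q_i,\tuple a_i$, so $\mathcal{M}\true\delta$. For the converse: if $\mathcal{M}\true\delta$ there are nonempty $B_i$, truth values for $Q_i$ and tuples $\tuple a_i$ with $\mathcal{M}[\tuple B/\tuple P]\true\gamma^*$; define $A_i := \emptyset$ if $Q_i$ is true and $A_i := B_i\setminus\{\tuple a_i\}$ otherwise. The same induction shows $\mathcal{M}[\tuple A/\tuple P]\true\gamma$, and each $A_i$ is obtained as required — but note we need the $A_i$ only to \emph{exist}, not to be nonempty, for this direction, which matches the statement.

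The main obstacle, and the only place requiring care, is the inductive equivalence "$P_i\tuple{\vphantom\wedge\smash{t}}$ against $A_i$ $\;\equiv\;$ guarded $P_i\tuple{\vphantom\wedge\smash{t}}$ against $B_i$", because $\gamma$ is evaluated with team semantics on its first-order part (as the excerpt notes, after the second-order quantifiers are resolved one applies team semantics, or equivalently Tarski semantics since the first-order part is first-order). Since the first-order part is an ordinary FO-sentence, flatness (Proposition~\ref{the: Flatness}) and Proposition~\ref{the: Tarski} let me reduce everything to single assignments, so the induction is genuinely the classical one and the only real content is the bookkeeping of the guards — which is exactly the computation sketched above and is routine. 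I would also remark that the lemma is "easy" precisely because nothing about arities or team structure is at stake: $Q_i$ is nullary and $\tuple a_i$ is first-order, so $\delta\in\ESOset[0]$ as required, and the free relation variables are untouched.
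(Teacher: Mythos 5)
There is a genuine gap in the left-to-right direction. Your guarded translation replaces $P_i\tuple{t}$ by $\neg Q_i\wedge P_i\tuple{t}\wedge\tuple{t}\neq\tuple{a}_i$, and in the case ``$Q_i$ false'' you choose $\tuple{a}_i\in A_i$, set $B_i=A_i$, and claim the clauses $\tuple{t}\neq\tuple{a}_i$, $\tuple{t}=\tuple{a}_i$ are subsumed. They are not: for an assignment under which $\tuple{t}$ takes the value $\tuple{a}_i$ the original atom is true (as $\tuple{a}_i\in A_i$) while the guarded atom is false, so $\gamma^*$ is strictly stronger than $\gamma$. Concretely, for $n=k=1$ and $\gamma:=\Ae x\,P_1x$ the sentence $\Phi=\Ee P_1\Ae x\,P_1x$ is true in every model, but your $\gamma^*=\Ae x\,(\neg Q_1\wedge P_1x\wedge x\neq a_1)$ is unsatisfiable for every choice of $a_1$, every truth value of $Q_1$ and every nonempty $B_1$, so the claimed equivalence fails for your $\delta$. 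Choosing $\tuple{a}_i\notin A_i$ would repair this case but is impossible when $A_i=M^k$; in fact the equality/inequality clauses are superfluous, since the flag $Q_i$ alone already makes the positive atom false and the negative atom true when simulating $A_i=\emptyset$, so the construction is fixed simply by deleting them (and the $\tuple{a}_i$). A secondary issue: $\delta$ begins with $\Es Q_1\dots\Es Q_n$, whereas the paper stipulates that $\ESOset[0]$-formulas are second order quantifier free, so quantified nullary flags are not available as such; you would need to expand them into a disjunction over the $2^n$ truth assignments of the $Q_i$.

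After these two repairs your argument essentially collapses to the paper's proof, which proceeds by induction on $n$: given $\xi$ working for $P_1,\dots,P_{n-1}$, it builds $\xi'$ by replacing $P_n\tuple{t}$ with $\tuple{t}\neq\tuple{t}$ and $\neg P_n\tuple{t}$ with $\tuple{t}=\tuple{t}$ (hard-coding $P_n=\emptyset$) and sets $\delta:=\xi\vee\xi'$. The case distinction on emptiness is thus made by a syntactic disjunction rather than by quantified flags with guard tuples, which keeps $\delta$ second-order quantifier free and avoids the problem above entirely.
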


\begin{proof}
We prove the claim by induction on $n$: If $n=0$, then $\Phi=\gamma$ and we can trivially choose $\delta:=\gamma$. Suppose then that the claim holds for $n-1$, i.e. there exists $\xi\in\ESOset[0]$ with the same free relation variables as $\psi$ such that
\begin{align*}
	\mathcal{M}\true\Ee P_1\dots\Ee P_{n-1}\gamma \;\text{ iff \,there exist nonempty } A_1,\dots,A_{n-1}\subseteq M^k \\
	\text{ s.t. } \mathcal{M}[A_1/P_1,\dots,A_{n-1}/P_{n-1}]\true\xi,
\end{align*}
for all models $\mathcal{M}$ that have some interpretation for all the free relation variables in the formula $\psi$.
Let $\psi\in\subf(\xi)$. We define $\psi'$ recursively as 
\begin{align*}
	\psi' &= \psi\; \text{ if $\psi$ is a literal and does not contain $P_n$} \\
	(P_n\,\tuple{\vphantom\wedge\smash{t}}\,)' 
	&= (\tuple{\vphantom\wedge\smash{t}}\neq\tuple{\vphantom\wedge\smash{t}}\,), \\
	(\neg P_n\,\tuple{\vphantom\wedge\smash{t}}\,)' 
	&= (\tuple{\vphantom\wedge\smash{t}}=\tuple{\vphantom\wedge\smash{t}}\,) \\[0,1cm]
	(\psi\wedge\theta)' &= \psi'\!\wedge\theta', \;
	(\psi\vee\theta)'= \psi'\vee\theta', \\
	(\Ee x\,\psi)' &= \Ee x\,\psi', \;
	(\Ae x\,\psi)' = \Ae x\,\psi'.
\end{align*}
Now clearly the formula $\xi'$ is satisfied in a model $\mathcal{M}$ if and only if $\xi$ is satisfied in the model $\mathcal{M}[\,\emptyset/P_n]$. Thus we can define $\delta:=\xi\vee\xi'$, whence it is easy to see that the claim holds by the inductive hypothesis.
\end{proof}

Now we are ready to formulate our translation from ESO[$k$] to INEX[$k$]. For this translation we must require the given teams to be nonempty and assume that the free relation variables in $\ESOset[k]$-formulas are at most $k$-ary.
We also assume the first order part of $\ESOset$-formula to be in negation normal form.
But here we can allow the $\ESOset[k]$-formula $\Phi$ to have any number of free relation variables instead of just one. Suppose that $\Phi$ defines some properties $\text{p}_1,\dots,\text{p}_m$ for relations $R_1,\dots,R_m$ respectively. Then it is natural to say that $\varphi(\tuple y_1\dots \tuple y_m)\in\INEXset$ is equivalent with $\Phi$ if the relations $X(\tuple y_1),\dots,X(\tuple y_m)$ have the properties $\text{p}_1,\dots,\text{p}_m$ in all teams $X$ in which $\varphi$ true. 

\begin{theorem}\label{the: Expressing ESO[k]-formula with INEX[k]}
Let $\Phi(R_1\dots R_m)\in\ESOset[k]$, where the free relation variables $R_i$ are at most $k$-ary. Let $\tuple y,\dots,\tuple y_m$ be $k$-tuples of fresh variables. Then there exists an $\INEXset[k]$-formula $\varphi(\tuple y_1\dots \tuple y_m)$, such that
\[
	\mathcal{M}\true_X\varphi\; \text{ iff }\, \mathcal{M}[X(\tuple y_1)/R_1,\dots,X(\tuple y_m)/R_m]\true\Phi,
\]
for all suitable $L$-models $\mathcal{M}$ and nonempty teams $X$.
\end{theorem}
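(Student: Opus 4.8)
The plan is to carry out the compositional translation sketched in the introduction: replace each quantified $k$-ary relation variable $P_i$ by a fresh tuple $\tuple w_i$ of quantified first order variables of the same length, translate $P_i\,\tuple{\vphantom\wedge\smash{t}}$ to $\tuple{\vphantom\wedge\smash{t}}\inc\tuple w_i$ and $\neg P_i\,\tuple{\vphantom\wedge\smash{t}}$ to $\tuple{\vphantom\wedge\smash{t}}\exc\tuple w_i$, and similarly handle the free relation variables by translating $R_j\,\tuple{\vphantom\wedge\smash{t}}$ to $\tuple{\vphantom\wedge\smash{t}}\inc\tuple y_j$ and $\neg R_j\,\tuple{\vphantom\wedge\smash{t}}$ to $\tuple{\vphantom\wedge\smash{t}}\exc\tuple y_j$. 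Two preparations are needed. First, since a tuple of first order variables always receives at least one value, this device cannot simulate the quantification of an empty relation; so I would begin by applying Lemma \ref{the: Non-emptiness lemma} to $\Phi$, obtaining $\delta\in\ESOset[0]$ with the same free relation variables such that $\Phi$ holds iff $\delta$ holds for some \emph{nonempty} interpretations $A_1,\dots,A_n\subseteq M^k$ of $P_1,\dots,P_n$. Second, ordinary disjunction is not compositional here, because splitting a team may destroy the relation $X(\tuple y_j)$ or $X(\tuple w_i)$ that encodes $R_j$ or $P_i$; so disjunctions of $\delta$ must be translated using term value preserving disjunction (Definition \ref{def: Term value preserving disjunction}, Proposition \ref{the: Term value preserving disjunction}) that preserves the values of all of $\tuple y_1,\dots,\tuple y_m,\tuple w_1,\dots,\tuple w_n$. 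I would then define $\mu^*$ by recursion on $\mu\in\subf(\delta)$ (literals not mentioning relation variables unchanged; relation atoms as above; conjunction and both quantifiers homomorphic; $\psi\vee\theta$ to the indicated term value preserving disjunction of $\psi^*$ and $\theta^*$) and set $\varphi:=\Ee\tuple w_1\cdots\Ee\tuple w_n\,\delta^*$. Every inclusion or exclusion atom introduced has the arity of the corresponding relation variable, hence at most $k$, so $\varphi\in\INEXset[k]$, and $\fr(\varphi)=\vr(\tuple y_1\cdots\tuple y_m)$.

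The core is an inductive lemma, over $\mu\in\subf(\delta)$ and nonempty teams $Z$ whose domain contains all the $\tuple y_j,\tuple w_i$ and $\fr(\mu)$, relating $\mathcal{M}\true_Z\mu^*$ to whether every assignment of $Z$ satisfies $\mu$ in the model $\mathcal{M}[Z(\tuple y_1)/R_1,\dots,Z(\tuple y_m)/R_m,Z(\tuple w_1)/P_1,\dots,Z(\tuple w_n)/P_n]$. The atom cases are immediate from the truth conditions of inclusion and exclusion atoms, conjunction is immediate, and the quantifier cases are routine via locality, using that quantifying the fresh first order variables of $\delta$ leaves the relations $Z(\tuple y_j),Z(\tuple w_i)$ untouched. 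The disjunction case is the delicate one: a split $Z=Y\cup Y'$ is usable by the induction only if both halves still carry the full relations $Z(\tuple y_j),Z(\tuple w_i)$, for otherwise the induction hypothesis does not apply to $Y$ or to $Y'$; Proposition \ref{the: Term value preserving disjunction} is designed so that the chosen translation of the disjunction forces exactly this on nonempty halves, while an empty half is harmless by the empty team property. This makes the forward direction ($\INEX[k]\Rightarrow\ESO[k]$) of the induction go through smoothly.

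The main obstacle will be the backward direction of this induction. Splitting $Z$ according to which disjunct each assignment satisfies in the target model is in general \emph{not} a legal witness for the term value preserving disjunction, since it need not preserve the ranges of $\tuple y_j$ and $\tuple w_i$, and one cannot repair this by enlarging the halves because $\INEX$ is not closed upwards. The remedy is to choose all witnesses cooperatively rather than greedily: when translating $\Ee x\,\psi$ I would take the \emph{maximal} choice function, giving each assignment every value that keeps $\psi$ true in the target model, and pass along teams that stay ``saturated'' in this sense; then each team reaching a disjunction realizes, for every value of every preserved tuple, assignments satisfying either disjunct, so the partition by disjunct does preserve all the required ranges. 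Universal quantifiers produce full teams $Z[M/x]$ and so cause no trouble, and conjunctions and literals are trivial. Formulating and maintaining the right saturation invariant through the induction is, I expect, the technical crux. Once the lemma is in place, the theorem follows: by Proposition \ref{the: Quantification of vectors}, and since $X\neq\emptyset$ makes every nonempty $A_i$ realizable as $X[A_1/\tuple w_1]\cdots[A_n/\tuple w_n](\tuple w_i)$, $\mathcal{M}\true_X\varphi$ is equivalent to $\mathcal{M}\true_{X[A_1/\tuple w_1]\cdots[A_n/\tuple w_n]}\delta^*$ for some nonempty $A_i$; applying the lemma to this team (whose $\tuple y_j$-relations are $X(\tuple y_j)$ and whose $\tuple w_i$-relations are $A_i$) and then Lemma \ref{the: Non-emptiness lemma} yields $\mathcal{M}[X(\tuple y_1)/R_1,\dots,X(\tuple y_m)/R_m]\true\Phi$.
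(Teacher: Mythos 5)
Your proposal follows essentially the same route as the paper's proof: apply Lemma \ref{the: Non-emptiness lemma}, translate compositionally with $P_i\tuple{\vphantom\wedge\smash{t}}\mapsto\tuple{\vphantom\wedge\smash{t}}\subseteq\tuple w_i$, $\neg P_i\tuple{\vphantom\wedge\smash{t}}\mapsto\tuple{\vphantom\wedge\smash{t}}\mid\tuple w_i$ (likewise for the $R_j$ with $\tuple y_j$) and with term value preserving disjunction, then prove a two-directional induction. Your forward half is the paper's Claim \ref{R1}, and your ``saturation invariant'' for the backward half is exactly the paper's condition ($\star$) in Claim \ref{R2}; your device of taking \emph{maximal} witness sets is an equivalent way of achieving what the paper does by letting the witness function factor through the restriction to $\fr(\mu)$ -- in both cases the point is that truth of $\mu$ at an assignment does not depend on the $\tuple y_j,\tuple w_i$ coordinates, so the split at a disjunction respects the preserved ranges. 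So the substance is right and matches the paper.

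One step in your final assembly is not justified as written: you claim, citing Proposition \ref{the: Quantification of vectors}, that $\mathcal{M}\true_X\varphi$ is \emph{equivalent} to $\mathcal{M}\true_{X[A_1/\tuple w_1]\cdots[A_n/\tuple w_n]}\delta^*$ for some nonempty $A_i$. The right-to-left direction is clear (constant choice functions), but the left-to-right direction does not follow from lax quantification alone: the witnessing choice functions $\mathcal{F}_i$ need not be constant, so the witnessing team need not be rectangular, and $\delta^*$ is not monotone in the $\tuple w_i$-values (the exclusion atoms $\tuple{\vphantom\wedge\smash{t}}\mid\tuple w_i$ get harder when the range grows), so you cannot simply replace $\mathcal{F}_i$ by its union. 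The fix is immediate with your own lemma, and it is what the paper does: apply the forward half directly to the possibly non-rectangular team $X[\mathcal{F}_1/\tuple w_1,\dots,\mathcal{F}_n/\tuple w_n]$, read off $A_i$ as its $\tuple w_i$-range (nonempty since $X\neq\emptyset$), and conclude $\mathcal{M}[X(\tuple y_1)/R_1,\dots,X(\tuple y_m)/R_m]\true\Phi$ via Lemma \ref{the: Non-emptiness lemma}; the rectangular teams are only needed in the backward direction, where they are constructed outright.
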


\begin{proof}
Since $\Phi\in\ESOset[k]$, it is of the form $\Phi=\Ee P_1\dots\Ee P_n\,\gamma$, where $P_1,\dots,P_n$ are relation variables and $\gamma$ is the first order part of $\Phi$. Without loss of generality, we may assume that $P_1,\dots,P_n,R_1,\dots,R_m$ are all distinct and $k$-ary. Let $\delta$ be the formula given by Lemma \ref{the: Non-emptiness lemma} for the formula $\gamma$. Now we have
\begin{align*}
	\mathcal{M}\true \Phi\;\,\text{ iff \;there exist nonempty }
	A_1,\dots,A_n\subseteq M^k \text{ s.t. } \mathcal{M}[\tuple A/\tuple P]\true \delta, \tag{$\triangle$}
\end{align*}
for all models $\mathcal{M}$ that have interpretations for the relation variables $R_1,\dots,R_m$.
Let $\tuple w_1,\dots,\tuple w_n$ be $k$-tuples of fresh variables. The formula $\psi'$ is defined recursively for each $\psi\in\subf(\delta)$:
\begin{align*}
	\psi' &= \psi\; \text{ if $\psi$ is a literal and neither $P_i$ nor $R_j$ } \\
	&\hspace{3cm}\text{ occurs in } \psi \text{ for any } i \text{ or } j. \\[0,1cm]
	(P_i\,\tuple{\vphantom\wedge\smash{t}}\,)' 
	&= \tuple{\vphantom\wedge\smash{t}}\subseteq\tuple w_i, \hspace{9pt}
	(\neg P_i\,\tuple{\vphantom\wedge\smash{t}}\,)' 
	= \tuple{\vphantom\wedge\smash{t}}\,\mid\tuple w_i \hspace{0,45cm}\text{ for all } i\leq n \\
	(R_i\,\tuple{\vphantom\wedge\smash{t}}\,)' 
	&= \tuple{\vphantom\wedge\smash{t}}\subseteq\tuple y_i, \quad
	(\neg R_i\,\tuple{\vphantom\wedge\smash{t}}\,)' 
	= \tuple{\vphantom\wedge\smash{t}}\,\mid\tuple y_i \hspace{0,5cm}\text{ for all } i\leq m \\[0,1cm]
	(\psi\wedge\theta)' &= \psi'\!\wedge\theta' \\
	(\psi\vee\theta)' &= \psi'\!\veebar\theta', \;\;\text{ where }\;
	\veebar \,:=\,  \underset{\scriptscriptstyle\tuple w_1,\dots,\tuple w_n,\tuple y_1,\dots,\tuple y_m}{\vee} \\
	(\Es x\,\psi)' &= \Es x\,\psi', \quad
	(\Ae x\,\psi)' = \Ae x\,\psi'.
\end{align*}
Now we can define the formula $\varphi$ simply as:
\begin{align*}
	\varphi := \Ee\tuple w_1 \dots\Ee\tuple w_n\delta'.
\end{align*}
Clearly $\varphi$ is an $\INEXset[k]$-formula and $\fr(\varphi)=\vr(\tuple y_1\dots\tuple y_m)$\footnote{Also, note that if $\Phi$ is an $\ESOset$-sentence, then $\varphi$ is an $\INEXset$-sentence.}. Before proving the claim of this theorem need to prove Claims \ref{R1} and \ref{R2}.

\begin{claim}\label{R1}
Let $\mu\in\subf(\delta)$ and let $X$ be a team such that the variables $\tuple w_1,\dots,\tuple w_n$, $\tuple y_1,\dots,\tuple y_m$ are in $\dom(X)$. Let
\begin{align*}
	\mathcal{M}' := \mathcal{M}[&X(\tuple w_1)/P_1,\dots,X(\tuple w_n)/P_n,\,X(\tuple y_1)/R_1,\dots,X(\tuple y_m)/R_m].
\end{align*}
Now we have: \qquad $\text{If } \mathcal{M}\true_X\mu', \text{ then } \mathcal{M}'\true_X\mu.$
\end{claim}

\smallskip

\noindent
We prove this claim by structural induction on $\mu$:
\begin{itemize}[leftmargin=*]
\item If $\mu$ is a literal such that neither $P_i$ nor $R_j$ occurs in $\mu$ for any $i\leq n$ or $j\leq m$, the claim holds trivially since $\mu' = \mu$.

\item Let $\mu=P_j\tuple{\vphantom\wedge\smash{t}}$\, for some $j$ \;(the case $\mu=R_j\tuple{\vphantom\wedge\smash{t}}$ is analogous).
Suppose that $\mathcal{M}\true_{X}(P_j\tuple{\vphantom\wedge\smash{t}}\,)'$, i.e. $\mathcal{M}\true_{X}\tuple{\vphantom\wedge\smash{t}}\subseteq\tuple w_j$, and let $s\in X$. Because $\mathcal{M}\true_{X}\tuple{\vphantom\wedge\smash{t}}\subseteq\tuple w_j$, there exists $s'\in X$ such that $s'(\tuple w_j)=s(\tuple{\vphantom\wedge\smash{t}}\,)$. Now we have $s(\tuple{\vphantom\wedge\smash{t}}\,) \in X(\tuple w_j) = P_j^\mathcal{M'}$, and thus $\mathcal{M}'\true_X P_j \tuple{\vphantom\wedge\smash{t}}$.

\item Let $\mu=\neg P_j\tuple{\vphantom\wedge\smash{t}}$\, for some $j$ \;(the case $\mu=\neg R_j\tuple{\vphantom\wedge\smash{t}}$ is analogous).
Suppose that $\mathcal{M}\true_{X}(\neg P_j\tuple{\vphantom\wedge\smash{t}}\,)'$, i.e. $\mathcal{M}\true_{X}\tuple{\vphantom\wedge\smash{t}}\mid\tuple w_j$ and let $s\in X$. Since $\mathcal{M}\true_{X}\tuple{\vphantom\wedge\smash{t}}\mid\!\tuple w_j$, we have $s(\tuple{\vphantom\wedge\smash{t}}\,)\neq s'(\tuple w_j)$ for each $s'\in X$. Therefore $s(\tuple{\vphantom\wedge\smash{t}}\,)\notin X(\tuple w_j) =  P_j^\mathcal{M'}$, and thus $\mathcal{M}'\true_X \neg P_j\tuple{\vphantom\wedge\smash{t}}$.

\item The case $\mu=\psi\wedge\theta$ is straightforward to prove.

\item Let $\mu=\psi\vee\theta$.
Suppose that $\mathcal{M}\true_X(\psi\vee\theta)'$, i.e. $\mathcal{M}\true_{X}\psi'\veebar\,\theta'$. Thus there are $Y_1,Y_2\subseteq X$ s.t. $Y_1\cup Y_2=X$, $\mathcal{M}\true_{Y_1}\psi'$ and $\mathcal{M}\true_{Y_2}\theta'$, and if $Y_1,Y_2\neq\emptyset$, then the tuples $\tuple w_i$ and $\tuple y_j$ have the same set of values in $Y_1$ and $Y_2$ as they have in $X$ (for each $i\leq n$ and $j\leq m$).

If $Y_1=\emptyset$, then $Y_2=X$ and thus $\mathcal{M}\true_X\theta'$. By the inductive hypothesis $\mathcal{M}'\true_{X}\theta$ and thus $\mathcal{M}'\true_{X}\psi\vee\theta$. Analogously if $Y_2=\emptyset$, then $\mathcal{M}'\true_{X}\psi\vee\theta$.
Suppose then that $Y_1,Y_2\neq\emptyset$. Now by the inductive hypothesis we have
\[
	\begin{cases}
		\mathcal{M}[Y_1(\tuple w_i)_{i\leq n}/\tuple P,\,Y_1(\tuple y_j)_{j\leq m}/\tuple R\,]\true_{Y_1}\psi \\
		\mathcal{M}[Y_2(\tuple w_i)_{i\leq n}/\tuple P,\,Y_2(\tuple y_j)_{j\leq m}/\tuple R\,]\true_{Y_2}\theta.
	\end{cases}
\]
Because $\tuple w_i$ and $\tuple y_j$ have the same set of values in $Y_1$ and $Y_2$ as in $X$ (for any $i\leq n$, $j\leq m$), we have $\mathcal{M}'\true_{Y_1}\psi$ and $\mathcal{M}'\true_{Y_2}\theta$. Therefore $\mathcal{M}'\true_{X}\psi\vee\theta$.

\item The cases $\mu=\Es x\,\psi$ and $\mu=\Ae x\,\psi$ are straightforward to prove. 
\end{itemize}

\smallskip

\begin{claim}\label{R2}
Let $\mu\in\subf(\delta)$ and assume that $A_1,\dots,A_n,$ $B_1,\dots,B_m\subseteq M^k$ are nonempty sets. Let $X\neq\emptyset$ be a team such that $\vr(\tuple y_1\dots\tuple y_m)\subseteq\dom(X)$ and for each $i\leq m$ and $r\in X\!\upharpoonright\!\fr(\mu)$ the following assumption holds:
\[
	X_r(\tuple y_i) = B_i, \text{ where } X_r:= \{s\in X\,\mid \;s\upharpoonright\fr(\mu)=r\}. \tag{$\star$}
\]
This condition can be written equivalently as: For each $r\!\in\!X\!\upharpoonright\!\fr(\mu)$, $i\leq m$ and $\tuple{\vphantom t\smash{b}}\!\in\!B_i$ there exists $s\in X$ such that $s\!\upharpoonright\!(\fr(\mu)\!\cup\!\vr(\tuple y_i))=r[\tuple{\vphantom t\smash{b}}/\tuple y_i]$. 
That is, each assignment in $X\!\upharpoonright\!\fr(\varphi)$ can be extended to $X$ with all of the values in $B_i$.

\medskip

\noindent
Now the following implication holds:
\[
	\text{If } \mathcal{M}'\true_{X\upharpoonright\fr(\mu)}\mu, \text{ then } \mathcal{M}\true_{X'}\mu',
\]
where $\mathcal{M}' := \mathcal{M}[\tuple A/\tuple P,\tuple B/\tuple R\,]$ and $X' := X[A_1/\tuple w_1,\dots,A_n/\tuple w_n]$.
\end{claim}

\smallskip

\noindent
We prove this claim by structural induction on $\mu$:
\begin{itemize}[leftmargin=*]
\item If $\mu$ is a literal such that neither $P_i$ nor $R_j$ occurs in $\mu$, then the claim holds by locality since $\mu' = \mu$.

\item Let $\mu=P_j\tuple{\vphantom\wedge\smash{t}}$ for some $j\leq n$.
Suppose that $\mathcal{M}'\true_{X\upharpoonright\fr(\mu)} P_j\tuple{\vphantom\wedge\smash{t}}$. Let $s\in X'$ and let $r\in~\!\!X\!\upharpoonright\!\fr(\mu)$ be an assignment for which $r=s\!\upharpoonright\!\fr(\mu)$. Since $\mathcal{M}'\true_{X\upharpoonright\fr(\mu)} P_j\tuple{\vphantom\wedge\smash{t}}$, we have $r(\tuple{\vphantom\wedge\smash{t}}\,)\in P_j^\mathcal{M'}\!\!=\!A_j\!=\!X'(\tuple w_j)$. Thus there exists $s'\in X'$ s.t. $s'(\tuple w_j)=r(\tuple{\vphantom\wedge\smash{t}}\,)$. Now $s(\tuple{\vphantom\wedge\smash{t}}\,) = r(\tuple{\vphantom\wedge\smash{t}}\,) = s'(\tuple w_j)$. Therefore $\mathcal{M}\true_{X'}\tuple{\vphantom\wedge\smash{t}}\subseteq\tuple w_j$, i.e. $\mathcal{M}\true_{X'}(P_j\tuple{\vphantom\wedge\smash{t}}\,)'$.

\item Let $\mu=\neg P_j\tuple{\vphantom\wedge\smash{t}}$ for some $j\leq n$.
Suppose that $\mathcal{M}'\true_{X\upharpoonright\fr(\mu)} \neg P_j\tuple{\vphantom\wedge\smash{t}}$. Let $s,s'\in X'$ and let $r\in X\!\upharpoonright\!\fr(\mu)$ be an assignment s.t. $r=s\!\upharpoonright\!\fr(\mu)$. Because $\mathcal{M}'\true_{X\upharpoonright\fr(\mu)}\neg P_j\tuple{\vphantom\wedge\smash{t}}$, we have $r(\tuple{\vphantom\wedge\smash{t}}\,)\notin P_j^\mathcal{M'}\!\!=\!A_j\!=\!X'(\tuple w_j)$. Hence it has to be that $r(\tuple{\vphantom\wedge\smash{t}}\,)\neq s'(\tuple w_j)$, and thus $s(\tuple{\vphantom\wedge\smash{t}}\,) = r(\tuple{\vphantom\wedge\smash{t}}\,) \neq s'(\tuple w_j)$. Therefore $\mathcal{M}\true_{X'}\tuple{\vphantom\wedge\smash{t}}\mid\tuple w_j$, i.e. $\mathcal{M}\true_{X'}(\neg P_j\tuple{\vphantom\wedge\smash{t}}\,)'$.

\item Let $\mu=R_j\tuple{\vphantom\wedge\smash{t}}$ or $\mu=\neg R_j\tuple{\vphantom\wedge\smash{t}}$ for some $j\leq m$.
Note that because the condition ($\star$) holds for $X$ (with respect to $\fr(\mu)$), we have $X(\tuple y_j)=B_j$. Hence $R_j^\mathcal{M'}\!\!=\!B_j\!=\!X(\tuple y_j)\!=\!X'(\tuple y_j)$, and thus the cases $\mu=R_j\tuple{\vphantom\wedge\smash{t}}$ and $\mu=\neg R_j\tuple{\vphantom\wedge\smash{t}}$ can be proved analogously as we proved the two previous cases.

\item The case $\mu=\psi\wedge\theta$ is straightforward to prove.

\item Let $\mu=\psi\vee\theta$.
Suppose that $\mathcal{M}'\true_{X\upharpoonright\fr(\mu)}\psi\vee\theta$, i.e. there are $Y_1^*,Y_2^*\subseteq X\!\upharpoonright\!\fr(\mu)$ such that $Y_1^*\cup Y_2^*=X\!\upharpoonright\!\fr(\mu)$, $\mathcal{M}'\true_{Y_1^*}\psi$ and $\mathcal{M}'\true_{Y_2^*}\theta$. Let
\[
	Y_1 := \{s\in X\mid s\upharpoonright\fr(\mu)\in Y_1^*\} \quad\text{and}\quad
	Y_2 := \{s\in X\mid s\upharpoonright\fr(\mu)\in Y_2^*\}.
\]
Now $Y_1\!\upharpoonright\!\fr(\mu) = Y_1^*$, $Y_2\!\upharpoonright\!\fr(\mu) = Y_2^*$ and $Y_1\cup Y_2=X$. 
Let
\[
	Y_1' := Y_1[A_1/\tuple w_1,\dots,A_n/\tuple w_n] \quad\text{and}\quad
	Y_2' := Y_2[A_1/\tuple w_1,\dots,A_n/\tuple w_n].
\]
Now $X'\!=\!Y_1'\cup Y_2'$. If $Y_1'\!=\!\emptyset$, then $Y_2'\!=\!X'$ and thus clearly $\mathcal{M}\true_{X'}\psi'\veebar\theta'$, i.e $\mathcal{M}\true_{X'}(\psi\vee\theta)'$. Analogously if $Y_2'\!=\!\emptyset$, then $\mathcal{M}\true_{X'}(\psi\vee\theta)'$.
Suppose then that $Y_1',Y_2'\neq\emptyset$. Since the condition ($\star$) holds for $X$ with respect to $\fr(\mu)$, by the definition of $Y_1$ it is easy to see that ($\star$) holds also for $Y_1$ with respect to $\fr(\mu)$. Since $\fr(\psi)\subseteq\fr(\mu)$, $(\star)$ holds for $Y_1$ also with respect to $\fr(\psi)$. Analogously $(\star)$ holds for $Y_2$ with respect to $\fr(\theta)$.
Therefore, by the inductive hypothesis, $\mathcal{M}\true_{Y_1'}\psi'$ and $\mathcal{M}\true_{Y_2'}\theta'$. 
We also have $Y_1'(\tuple w_i)\!=\!Y_2'(\tuple w_i)\!=\!A_i\!=\!X'(w_i)$ for each $i\leq n$. Furthermore, by the condition ($\star$),  $Y_1'(\tuple y_i)\!=\!Y_2'(\tuple y_i)\!=\!B_i\!=\!X'(y_i)$ for each $i\leq m$. Therefore $\mathcal{M}\true_{X'}\psi'\veebar\,\theta'$, i.e $\mathcal{M}\true_{X'}(\psi\vee\theta)'$.

\item Let $\mu=\Es x\,\psi$ \;(the case $\mu=\Ae x\,\psi$ can be proven similarly).
Suppose that $\mathcal{M}'\true_{X\upharpoonright\fr(\mu)}\Es x\,\psi$. Hence there is $F:X\!\upharpoonright\!\fr(\mu)\rightarrow\mathcal{P}^*(M)$ such that $\mathcal{M}'\true_{(X\upharpoonright\fr(\mu))[F/x]}\psi$. Let
\begin{align*}
	G:&\;X\rightarrow\mathcal{P}^*(M), \;\; s\mapsto F(s\!\upharpoonright\!\fr(\mu)) \\
	G':&\;X'\rightarrow\mathcal{P}^*(M), \;\; s\mapsto F(s\!\upharpoonright\!\fr(\mu)).
\end{align*}
Now $X[G/x]\!\upharpoonright\!\fr(\psi)=(X\!\upharpoonright\!\fr(\mu))[F/x]$ and therefore $\mathcal{M}'\true_{X[G/x]\upharpoonright\fr(\psi)}\psi$. Since ($\star$) holds for $X$ with respect to $\fr(\mu)$, by the definition of $G$ ($\star$) holds for $X[G/x]$ with respect to $\fr(\psi)$. Let $X'' := (X[G/x])[A_1/\tuple w_1,\dots,A_n/\tuple w_n]$, whence by the inductive hypothesis we have $\mathcal{M}\true_{X''}\psi'$. By the definition of $G'$, we have $X'' = X'[G'/x]$, and thus $\mathcal{M}\true_{X'[G'/x]}\psi'$. Hence $\mathcal{M}\true_{X'}\Es x\,\psi'$, i.e. $\mathcal{M}\true_{X'}(\Es x\,\psi)'$. 
\end{itemize}

\bigskip

\noindent
We are now are finally ready prove the claim of this theorem:
\[
	\mathcal{M}\true_X\varphi\; \text{ iff }\, \mathcal{M}[X(\tuple y_1)/R_1,\dots,X(\tuple y_m)/R_m]\true\Phi.
\]


\noindent
Suppose first that $\mathcal{M}\true_X\varphi$, i.e. $\mathcal{M}\true_X\Es\tuple w_1\dots\Es\tuple w_n\delta'$. Thus there exist
\begin{align*}
	&\mathcal{F}_1:X\rightarrow\mathcal{P}^*(M^k) \\[-0,1cm]
	&\mathcal{F}_2:X[\mathcal{F}_1/\tuple w_1]\rightarrow\mathcal{P}^*(M^k) \\[-0,3cm]
	&\;\vdots \\[-0,15cm]
	&\mathcal{F}_n:X[\mathcal{F}_1/\tuple w_1,\dots,
	\mathcal{F}_{n-1}/\tuple w_{n-1}]\rightarrow\mathcal{P}^*(M^k) \\
	&\qquad\text{s.t. } \mathcal{M}\true_{X'}\delta', \text{ where } 
	X':=X[\mathcal{F}_1/\tuple w_1,\dots,\mathcal{F}_n/\tuple w_n].
\end{align*}
Let $\mathcal{M}' := \mathcal{M}[X'(\tuple w_1)/P_1,\dots,X'(\tuple w_n)/P_n,\,X'(\tuple y_1)/R_1,\dots,X'(\tuple y_m)/R_m]$.
Now by Claim \ref{R1}, we have $\mathcal{M}'\true_{X'}\delta$. Because $X'\!\upharpoonright\!\fr(\delta)=\{\emptyset\}$, by locality $\mathcal{M}'\true\delta$. Since $X'(\tuple y_i)\!=\!X(\tuple y_i)$ for each $i\leq m$, we have $\mathcal{M}[X(\tuple y_1)/R_1,\dots,X(\tuple y_m)/R_m]\true\Phi$.

\medskip

\noindent
Suppose then that $\mathcal{M}[X(\tuple y_1)/R_1,\dots,X(\tuple y_m)/R_m]\true\Phi$. Thus, by the equation ($\triangle$), there are nonempty sets $A_1,\dots,A_n\subseteq M^k$ such that 
\[
	\mathcal{M}'\true\delta, \text{ where } 
	\mathcal{M}' := \mathcal{M}[A_1/P_1,\dots,A_n/P_n,X(\tuple y_1)/R_1,\dots,X(\tuple y_m)/R_m].
\]
Since, by the assumptions, $X\neq\emptyset$ and $\vr(\tuple y_1\dots\tuple y_m)\subseteq\dom(X)$, we have $X(\tuple y_i)\neq\emptyset$ for each $i\leq m$. We define the function $\mathcal{F}_i$ for each $i\leq n$ by
\[
	\mathcal{F}_i:X[\mathcal{F}_1/\tuple w_1,
	\dots,\mathcal{F}_{i-1}/\tuple w_{i-1}]\rightarrow\mathcal{P}^*(M^k), \;\; s\mapsto A_i.
\]
Let $X' := X[\mathcal{F}_1/\tuple w_1,\dots,\mathcal{F}_n/\tuple w_n]$, whence $X' = X[A_1/\tuple w_1,\dots,A_n/\tuple w_n]$. Since $X\upharpoonright\fr(\delta)=\{\emptyset\}$, the condition ($\star$) in Claim \ref{R2}  holds for the team $X$ with respect to $\fr(\delta)$. We also have $\mathcal{M}'\true_{X\upharpoonright\fr(\delta)}\delta$ and thus by Claim~\ref{R2} we obtain $\mathcal{M}\true_{X'}\delta'$. Hence $\mathcal{M}\true_{X}\Es\tuple w_1\dots\Es\tuple w_n\delta'$, i.e. $\mathcal{M}\true_X\varphi$. 
\end{proof}

\begin{remark}
By Theorem \ref{the: Expressing ESO[k]-formula with INEX[k]}, for each $\ESOset[k]$-formula $\Phi(R)$, for which $R$ is at most $k$-ary, there exists an $\INEXset[k]$-formula $\varphi(\tuple y\,)$ such that for all $X\neq\emptyset$:
\[
	\mathcal{M}\true_X\varphi\; \text{ iff }\, \mathcal{M}[X(\tuple y\,)/R]\true\Phi.
\]
Without the requirement of non-empty teams and the arity restriction on $R$, this would be the converse of Theorem \ref{the: Expressing INEX[k]-formula with ESO[k]}. But due empty team property of INEX, the left side of the equivalence is always true for the empty team and any formula of INEX. Thus, when defining classes of relations with INEX, we can only define such classes that include the empty relation. 
The arity restriction is also necessary since it can be shown that for any $k$ there are ESO[$k$]-definable properties of ($k$$+$$1$)-ary relations $X(\tuple y\,)$ that cannot be defined in INEX[$k$]. A~proof for this claim will be presented in a future work by the author.
\end{remark}

Since Theorem \ref{the: Expressing INEX[k]-formula with ESO[k]} and Theorem \ref{the: Expressing ESO[k]-formula with INEX[k]} can also be proven for $\INEXset[k]$- and $\ESOset[k]$-sentences, we obtain the following corollary.

\begin{corollary}\label{the: INEX[$k$] captures ESO[$k$] on the level of sentences}
On the level of sentences \emph{INEX[$k$]} captures the expressive power of \emph{ESO[$k$]}. In particular, $\INEX[1]$ captures $\EMSO$.
\end{corollary}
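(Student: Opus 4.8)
The plan is to derive the corollary from the sentence versions of Theorem \ref{the: Expressing INEX[k]-formula with ESO[k]} and Theorem \ref{the: Expressing ESO[k]-formula with INEX[k]}. Recall that, by convention, an $\INEXset$-sentence $\varphi$ is true in a model $\mathcal{M}$ exactly when $\mathcal{M}\true_{\{\emptyset\}}\varphi$, and that $\{\emptyset\}$ is a nonempty team. Hence it suffices to show that, for every vocabulary $L$, the class of $L$-models defined by an $\INEXset[k]$-sentence is always definable by an $\ESOset[k]$-sentence, and conversely.

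First I would handle the direction from $\INEX[k]$ to $\ESO[k]$. Given an $\INEXset[k]$-sentence $\varphi$, I run the construction of Theorem \ref{the: Expressing INEX[k]-formula with ESO[k]} (together with those of Theorems \ref{the: Expressing EXC[k]-formula with ESO[k]} and \ref{the: Expressing INC[k]-formula with ESO[k]}) with an empty tuple of free first order variables; as indicated in the footnotes of those proofs, the components involving the free relation variable $R$ --- the leading universal block over $\tuple y$ and the clauses guarding $R$ --- are simply dropped, and one obtains an $\ESOset[k]$-sentence $\Phi$. The auxiliary claims (Claims \ref{star} and \ref{AT3}) make no use of $\tuple y$, so they go through verbatim, and they yield $\mathcal{M}\true_{\{\emptyset\}}\varphi$ iff $\mathcal{M}\true\Phi$. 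Thus $\varphi$ and $\Phi$ define the same class of models.

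For the converse, let $\Phi$ be an $\ESOset[k]$-sentence. I apply Theorem \ref{the: Expressing ESO[k]-formula with INEX[k]} with no free relation variables ($m=0$); by the footnote in its proof the resulting formula $\varphi$ is then an $\INEXset[k]$-sentence, and the theorem gives $\mathcal{M}\true_X\varphi$ iff $\mathcal{M}\true\Phi$ for every nonempty team $X$. Instantiating $X=\{\emptyset\}$ shows that $\varphi$ and $\Phi$ define the same class of models. Combining the two directions proves that $\INEX[k]$ and $\ESO[k]$ capture each other at the level of sentences. For the last clause, observe that on sentences $\ESO[1]$ is precisely existential monadic second order logic: quantifying at most unary relation variables is exactly monadic existential second order quantification, and a sentence carries no free relation variables on which an arity restriction could bite; hence $\INEX[1]$ captures $\EMSO$.

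I do not expect a serious obstacle; the substantive work is already contained in Theorems \ref{the: Expressing INEX[k]-formula with ESO[k]} and \ref{the: Expressing ESO[k]-formula with INEX[k]}. The two points that deserve care are (i) checking that the proofs of those theorems specialize cleanly to the sentence case exactly as their footnotes assert --- in particular that every auxiliary claim survives the removal of the free first order variables $\tuple y$ and of the free relation variables $R_i$ --- and (ii) reconciling the team-semantic convention for the truth of a sentence (namely $\true_{\{\emptyset\}}$) with the Tarskian notion of truth used for $\ESO$, which is exactly what lets the two one-directional equivalences be combined into a single statement about definable model classes.
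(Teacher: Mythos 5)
Your proposal is correct and follows essentially the same route as the paper, which derives the corollary directly from the observation that Theorems \ref{the: Expressing INEX[k]-formula with ESO[k]} and \ref{the: Expressing ESO[k]-formula with INEX[k]} can also be proven for sentences (via the footnoted sentence-case constructions and the convention that sentence truth means truth in the team $\{\emptyset\}$). Your additional checks --- that the auxiliary claims survive the removal of $\tuple y$ and the $R_i$, and that $\ESO[1]$ on sentences is exactly $\EMSO$ --- are exactly the points the paper leaves implicit.
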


As a direct corollary we also obtain a strict arity hierarchy for INEX, since the arity hierarchy for ESO (with arbitrary vocabulary) is strict, as shown by Ajtai \cite{Ajtai83} in 1983. As mentioned in the introduction, $k$-ary dependence and independence logics capture the fragment of ESO where at most ($k$$-$$1$)-ary functions can be quantified. This fragment differs from ESO[$k$] at least when $k$ is one or two -- and presumably for any~$k$. Hence it appears that INEX[$k$] does not correspond to $l$-ary independence logic for any $k$ and $l$, even though without arity bounds these two logics are equivalent.

\subsection*{On the duality of inclusion and exclusion atoms}


For the last topic in this section, we will discuss the relationship of inclusion and exclusion atoms. We will also consider natural candidates for the semantics of negated inclusion and exclusion atoms.
In our translation in Theorem~\ref{the: Expressing ESO[k]-formula with INEX[k]} we used inclusion and exclusion atoms in a dualistic way by replacing atomic formulas $P\,\tuple{\vphantom\wedge\smash{t}}$ with inclusion atoms and negated atomic formulas $\neg P\,\tuple{\vphantom\wedge\smash{t}}$ with exclusion atoms. This correspondence becomes more obvious when we reformulate the truth conditions for $P\,\tuple{\vphantom\wedge\smash{t}}$ and $\neg P\,\tuple{\vphantom\wedge\smash{t}}$ (compare with Definition~\ref{def: Team semantics}) as follows:
\[
	\hspace{0,45cm}\mathcal{M}\true_X P\,\tuple{\vphantom\wedge\smash{t}} 
	\;\text{ iff }\; X(\tuple{\vphantom\wedge\smash{t}}\,) \subseteq P^{\mathcal{M}}
	\qquad\text{and}\qquad
	\mathcal{M}\true_X \neg P\,\tuple{\vphantom\wedge\smash{t}} 
	\;\text{ iff }\; X(\tuple{\vphantom\wedge\smash{t}}\,) \subseteq \overline{P^{\mathcal{M}}}.
\]
The truth conditions for inclusion and exclusion atoms can be written in a form that is very similar to the equivalences above:
\[
	\mathcal{M}\true_X \tuple{\vphantom\wedge\smash{t}}_1\inc\tuple{\vphantom\wedge\smash{t}}_2 
	\;\text{ iff }\; X(\tuple{\vphantom\wedge\smash{t}}_1) \subseteq X(\tuple{\vphantom\wedge\smash{t}}_2)
	\qquad\text{and}\qquad
	\mathcal{M}\true_X \tuple{\vphantom\wedge\smash{t}}_1\exc\tuple{\vphantom\wedge\smash{t}}_2 
	\;\text{ iff }\; X(\tuple{\vphantom\wedge\smash{t}}_1) \subseteq \overline{X(\tuple{\vphantom\wedge\smash{t}}_2)}.
\]
%
%
As we argued earlier (Observation~\ref{obs: contradictory negation}), the semantics of a contradictory negation ($\mathcal{M}\true_X\neg\varphi$ iff $\mathcal{M}\ntrue_X\varphi$) is not a very natural choice of semantics for the negated atoms. Instead, it would be more natural to have such a semantics that is similar to the semantics of literals.  
From this viewpoint, a natural candidate for a semantics of a negated inclusion atom would be the following: 
\[
	\mathcal{M}\true_X\neg(\tuple{\vphantom\wedge\smash{t}}_1\inc\tuple{\vphantom\wedge\smash{t}}_2) 
	\;\text{ iff }\; X(\tuple{\vphantom\wedge\smash{t}}_1) \subseteq \overline{X(\tuple{\vphantom\wedge\smash{t}}_2)}. 
	\tag{$\neg\subseteq$}
\]
Then we would have $\neg(\tuple{\vphantom\wedge\smash{t}}_1\inc\tuple{\vphantom\wedge\smash{t}}_2) \equiv \tuple{\vphantom\wedge\smash{t}}_1\exc\tuple{\vphantom\wedge\smash{t}}_2$. Therefore, if we allow the use of negated atoms in $\INC[k]$ with our semantics, the resulting logic is equivalent with $\INEX[k]$. Note that since the exclusion relation is symmetric, our choice of semantics leads to the following equivalence:
\[
	\neg(\tuple{\vphantom\wedge\smash{t}}_1\inc\tuple{\vphantom\wedge\smash{t}}_2) 
	\;\equiv\; \tuple{\vphantom\wedge\smash{t}}_1\exc\tuple{\vphantom\wedge\smash{t}}_2 
	\;\equiv\; \tuple{\vphantom\wedge\smash{t}}_2\exc\tuple{\vphantom\wedge\smash{t}}_1 
	\;\equiv\; \neg(\tuple{\vphantom\wedge\smash{t}}_2\inc\tuple{\vphantom\wedge\smash{t}}_1).
\]
Hence, by this definition, $\neg(\tuple{\vphantom\wedge\smash{t}}_1\inc\tuple{\vphantom\wedge\smash{t}}_2) \equiv \neg(\tuple{\vphantom\wedge\smash{t}}_2\inc\tuple{\vphantom\wedge\smash{t}}_1)$ even though $\tuple{\vphantom\wedge\smash{t}}_1\inc\tuple{\vphantom\wedge\smash{t}}_2 \not\equiv \tuple{\vphantom\wedge\smash{t}}_2\inc\tuple{\vphantom\wedge\smash{t}}_1$. 
%
%
This kind of property of a negated atom might be a bit exotic, but not unthinkable, since our negation is not a contradictory negation.

Let us then consider semantics for the negated exclusion atom $\neg(\tuple{\vphantom\wedge\smash{t}}_1\exc\tuple{\vphantom\wedge\smash{t}}_2)$. Semantics of the inclusion atom $\tuple{\vphantom\wedge\smash{t}}_1\inc\tuple{\vphantom\wedge\smash{t}}_2$ is not a possible choice here, since by the symmetry of the exclusion relation, we must have $\neg(\tuple{\vphantom\wedge\smash{t}}_1\exc\tuple{\vphantom\wedge\smash{t}}_2) \equiv \neg(\tuple{\vphantom\wedge\smash{t}}_2\exc\tuple{\vphantom\wedge\smash{t}}_1)$. The truth condition $\mathcal{M}\true_X \tuple{\vphantom\wedge\smash{t}}_1\exc\tuple{\vphantom\wedge\smash{t}}_2$ iff $X(\tuple{\vphantom\wedge\smash{t}}_1) \cap X(\tuple{\vphantom\wedge\smash{t}}_2) = \emptyset$, naturally gives us the following candidate for a semantics.\footnote{Another possible candidate would be $\mathcal{M}\true_X\neg(\tuple{\vphantom\wedge\smash{t}}_1\exc\tuple{\vphantom\wedge\smash{t}}_2)$ iff $X(\tuple{\vphantom\wedge\smash{t}}_1) \subseteq X(\tuple{\vphantom\wedge\smash{t}}_2)$ or $X(\tuple{\vphantom\wedge\smash{t}}_1) \supseteq X(\tuple{\vphantom\wedge\smash{t}}_2)$, whence $\neg(\tuple{\vphantom\wedge\smash{t}}_1\exc\tuple{\vphantom\wedge\smash{t}}_2) \equiv \tuple{\vphantom\wedge\smash{t}}_1\inc\tuple{\vphantom\wedge\smash{t}}_2 \sqcup \tuple{\vphantom\wedge\smash{t}}_2\inc\tuple{\vphantom\wedge\smash{t}}_1$. We will not consider this choice here further.}
\[
	\mathcal{M}\true_X\neg(\tuple{\vphantom\wedge\smash{t}}_1\exc\tuple{\vphantom\wedge\smash{t}}_2) 
	\;\text{ iff }\; X(\tuple{\vphantom\wedge\smash{t}}_1) = X(\tuple{\vphantom\wedge\smash{t}}_2) 
	\tag{$\neg\mid\;$} 
\]
Now we have $\neg(\tuple{\vphantom\wedge\smash{t}}_1\exc\tuple{\vphantom\wedge\smash{t}}_2) \equiv \neg(\tuple{\vphantom\wedge\smash{t}}_2\exc\tuple{\vphantom\wedge\smash{t}}_1)$, as required, and $\neg(\tuple{\vphantom\wedge\smash{t}}_1\exc\tuple{\vphantom\wedge\smash{t}}_2) \equiv \tuple{\vphantom\wedge\smash{t}}_1\inc\tuple{\vphantom\wedge\smash{t}}_2 \wedge \tuple{\vphantom\wedge\smash{t}}_2\inc\tuple{\vphantom\wedge\smash{t}}_1$. 
This choice of semantics is actually equivalent with the semantics of \emph{equiextension atom} $\tuple{\vphantom\wedge\smash{t}}_1\bowtie\tuple{\vphantom\wedge\smash{t}}_2$ that was introduced by Galliani in \cite{Galliani12b}. This atom has been shown equivalent with the inclusion atom $\tuple{\vphantom\wedge\smash{t}}_1\inc\tuple{\vphantom\wedge\smash{t}}_2$ of the same arity (\cite{Galliani12b}). 
Hence if we allow the use of negated atoms in $\EXC[k]$ with our semantics, the resulting logic turns out be equivalent with $\INEX[k]$.


With our choices for semantics of negated inclusion and exclusion atoms, ($\neg\subseteq$) and ($\neg\mid\;$), we have
$\neg(\tuple{\vphantom\wedge\smash{t}}_1\inc\tuple{\vphantom\wedge\smash{t}}_2) \equiv \tuple{\vphantom\wedge\smash{t}}_1\mid\tuple{\vphantom\wedge\smash{t}}_2$ and $\neg(\tuple{\vphantom\wedge\smash{t}}_1\exc\tuple{\vphantom\wedge\smash{t}}_2) \equiv \tuple{\vphantom\wedge\smash{t}}_1\bowtie\tuple{\vphantom\wedge\smash{t}}_2$.
Now the exclusion atom is equivalent with the negated inclusion atom, but not vice versa. However, the negated exclusion atom is equally expressive as the inclusion atom of the corresponding arity. Hence even though inclusion and exclusion atoms are not exactly negations of each other, they nevertheless have a dualistic relationship. We could extend FO with either of these atoms and allow the use of its negation to obtain a logic equivalent to INEX.



In team semantics we must require all formulas to be in negation normal form. This can be seen as one of the weaknesses of this framework since the free use of negation is natural for a logic. Dependence logic and other related logics have also been criticized for not having sensible semantics for negated atoms\footnote{Originally, in \cite{Vaananen07}, negations were allowed to appear in front of dependence atoms. But the semantics for negated dependence atom was defined such that it was true only in the empty team. This way both empty team property and downwards closure were preserved.}.
However, there has not been much research on these issues. 

In order to solve these problems, Kuusisto \cite{Kuusisto15} has presented an alternative framework called \emph{double team semantics}. In this approach there are always two teams -- a ``verifying team'' and a ``falsifying team''. This allows to use negations freely, whence it just swaps the roles of these two teams. 
In \cite{Kuusisto15} Kuusisto has also presented a natural game-theoretic variant for this semantics, where negation essentially does role swapping of the verifying and the falsifying player.
This approach has received relatively little attention, but we believe that it should be studied further in order to understand the role of negation in team semantics more deeply.


\section{Examples of INEX-definable properties}\label{sec: Examples}


In this section we present several examples on the expressibility of inclusion-exclusion logic. Within these examples we also utilize several of the new operators we introduced in Section~\ref{sec: New operators}. Although all of the properties expressed here are known to be expressible in INEX by the results of the previous section, we believe that these examples are valuable for demonstrating the nature of inclusion-exclusion logic and team semantics in general.


By Corollary \ref{the: INEX[$k$] captures ESO[$k$] on the level of sentences} we know that, in particular, all EMSO-definable  properties of models can be expressed by using only unary inclusion and exclusion atoms. In the next example we show how two classical EMSO-definable properties of graphs can be defined in INEX[$1$].

\begin{example}\label{ex: Properties of graphs}
Let $\mathcal{G}=(V,E)$ be an undirected graph. Then we have
\begin{enumerate}[leftmargin=0.8cm]
\item[(a)] $\mathcal{G}$ is disconnected if and only if
\[
	\mathcal{G}\true\Es x_1\Es x_2\,\bigl(x_1\,|\, x_2\wedge\Ae z\,(z\inc x_1\vee z\inc x_2) 
	\wedge(\Ae y_1\inc x_1)(\Ae y_2\inc x_2)\neg Ey_1y_2\bigr).
\]
\item[(b)] $\mathcal{G}$ is $k$-colorable if and only if
\begin{align*}
	\mathcal{G}\true \gamma_{\leq k}\vee
	\Es x_1&\dots\Es x_k\,\Bigl(\bigwedge_{i\neq j}x_i\,|\, x_j\wedge\Ae z\,\bigl(\bigvee_{i\leq k} z\,\inc x_i\bigr) \\[-0,1cm]
	&\quad\qquad\qquad\wedge\bigwedge_{i\leq k}(\Ae y_1\inc x_i)(\Ae y_2\inc x_i)\neg Ey_1y_2\Bigr), \\[-0,8cm]
\end{align*}
where $\gamma_{\leq k}:= \Ee x_1\dots\Ee x_k\Ae y\,(\bigvee_{i\leq k}y\!=\!x_i)$.
\end{enumerate}

\smallskip

\noindent
We explain briefly why these equivalences hold: In (a) we first quantify two nonempty sets for the values of $x_1$ and $x_2$. We use exclusion atom to guarantee that these sets are disjoint. The formula $\Ae z\,(z\subseteq x_1\vee z\subseteq x_2)$ checks that the union of these sets covers the whole set of vertices (recall Example \ref{ex: Full relation}). Finally we use universal inclusion quantifiers to confirm that for any pair of elements chosen within these sets, there is no edge between them.

In (b) we first check if $\abs{V}\leq k$, in which case the graph would be trivially $k$-colorable. If that is not the case, we can quantify $k$ nonempty disjoint sets which represent the coloring of the graph. Confirming that these sets are disjoint and cover all the vertices can be done similarly as in (a). Finally we confirm that the coloring is correct by choosing any pair of vertices within an  unicolored set and checking that there is no edge between them.
\end{example}
The properties in Example~\ref{ex: Properties of graphs} could also be expressed in EMSO and then we could directly use our translation in Theorem~\ref{the: Expressing ESO[k]-formula with INEX[k]} to express these properties in INEX[$1$]. This method would give us sentences that are only slightly longer than the ones we have given above. However, the sentences above are not only shorter but also demonstrate the usefulness of universal inclusion quantifier.

Even though the arity fragments of INEX correspond to the arity fragments of ESO on the level of sentences, one should remember that these two logics have a different nature. Despite having the same expressive power, they provide us with alternative tools. Hence the study of inclusion-exclusion logic might even have potential for giving new insight on the arity fragments of ESO.

In the next example we demonstrate how we can use our translation in Theorem \ref{the: Expressing ESO[k]-formula with INEX[k]} to apply techniques of ESO directly to inclusion-exclusion logic. In ESO[$k$$+$$1$] we can quantify a $k$-ary function by quantifying a ($k$$+$$1$)-ary relation and giving requirements that it is a function. We can do this analogously in inclusion-exclusion logic; see the following example.
\begin{example}
Let $\varphi$ be an $\text{INEX}_{L\cup\{F\}}$-sentence where $F$ is a ($k$$+$$1$)-ary relation symbol. Let $\tuple x$ be a ($k$$+$$1$)-tuple of fresh variables. The formula $\Ee F\,\varphi$, where $F$ is quantified as a $k$-ary function, is equivalent with the $\INEXset$-sentence:
\begin{align*}
	\xi :=& \Ee\tuple x\,\bigl(\psi_1(\tuple x)\wedge\psi_2(\tuple x)\wedge\varphi'\bigr), \text{ where } \\
	&\begin{cases}
		\psi_1(\tuple x) := \Ae\tuple y\,\Ee z\,(\tuple yz\subseteq \tuple x)	\\
		\psi_2(\tuple x) := \Ae\tuple y\,\Ae z_1\Ae z_2\,\bigl((\tuple yz_1\,|\,\tuple x
		\,\underset{\scriptscriptstyle \tuple x}{\vee}
		\, \tuple yz_2\,|\, \tuple x)\,\underset{\scriptscriptstyle \tuple x}{\vee}\,z_1=z_2\bigr)
	\end{cases}
\end{align*}
and $\varphi'$ is a formula obtained from $\varphi$ by replacing all subformulas of the form $F \tuple{\vphantom\wedge\smash{t}}$ with inclusion atoms $\tuple{\vphantom\wedge\smash{t}}\subseteq\tuple x$, formulas $\neg F \tuple{\vphantom\wedge\smash{t}}$ with exclusion atoms $\tuple{\vphantom\wedge\smash{t}}\mid\tuple x$ and all disjunctions with the disjunctions that preserve the values of the tuple $\tuple x$. 

Note that $\psi_1$ and $\psi_2$ above are derived by changing the corresponding $\text{ESO}_{L\cup\{F\}}$-sentences $\Ae\tuple y\,\Ee z\,F\tuple yz$ and $\Ae\tuple y\,\Ae z_1\Ae z_2\,((F\tuple yz_1\wedge F\tuple yz_2)\rightarrow z_1\!=\!z_2)$ to negation normal form and then directly using our translation (Theorem \ref{the: Expressing ESO[k]-formula with INEX[k]}) from ESO to INEX.

If we also want the quantified function to injective or surjective, we can add either of the following formulas inside the brackets of the formula $\xi$ above:
\begin{align*}
	\psi_{\text{inj}}(\tuple x) &:= \Ae\tuple y_1\Ae\tuple y_2\Ae z\,((\tuple y_1z\,|\,\tuple x
	\,\underset{\scriptscriptstyle \tuple x}{\vee}\, \tuple y_2z\,|\, \tuple x)\,
	\underset{\scriptscriptstyle \tuple x}{\vee}\,\tuple y_1=\tuple y_2) \\
	\psi_{\text {surj}}(\tuple x) &:= \Ae z\Ee\tuple y\,(\tuple yz\inc \tuple x).
\end{align*}
In a similar way we can require any ESO-definable condition for the function that is quantified in the team as the values of the tuple $\tuple x$.
\end{example}

By using the method of the previous example, we can define infinity of a model in INEX[$2$] by simply saying that we can existentially quantify the values of variables $x_1$ and $x_2$ in such a way that in the resulting team $X$ the relation $X(x_1x_2)$ is a function that is injective but not surjective.

\begin{example}
Let $\delta_{\text{inf}}\in\INEXset[2]$ such that
\begin{align*}
	\delta_{\text{inf}} := \Ee x_1x_2\,\bigl(\psi_1(x_1x_2)&\wedge\psi_2(x_1x_2)\wedge\psi_{\text{inj}}(x_1x_2) 
	\wedge\Ee z\Ae y\,(yz\exc x_1x_2)\bigr),
\end{align*}
where the formulas $\psi_1$, $\psi_2$ and $\psi_{\text{inj}}$ are as in the previous example. 
Now a model $\mathcal{M}$ is infinite if and only if $\mathcal{M}\true\delta_{\text{inf}}$. Note that this property cannot be expressed by using only unary atoms since it is not EMSO-definable. 
\end{example}

The expressive power of INEX[$2$] is rather strong also on the level of formulas since, by Theorem~\ref{the: Expressing ESO[k]-formula with INEX[k]}, all ESO[$2$]-definable properties of $2$-ary relations in teams are definable in INEX[$2$]. In particular, we can say in INEX[$2$] that a certain variable $y$ gets infinitely many values within a team. This can be done simply by relativizing (recall Subsection~\ref{ssec: Relativization}) the sentence $\delta_{\text{inf}}$, of the previous example, on the variable $y$.
See the following example.

\begin{example}\label{ex: Infinity of a team}
Let $\delta_{\text{inf}}$ be as above and let $X$ be a nonempty team such that the variables in $\delta_{\text{inf}}$ are not in $\dom(X)$. Now for every $y\in\dom(X)$ we have
\[
	\mathcal{M}\true_X \delta_{\text{inf}}\!\upharpoonright\! y \; \text{ iff } \; X(y) \text{ is infinite}.
\]
If the team $X$ has a finite domain, we can now say that $X$ consists of infinitely many assignments. That is, if $\dom(X)=\{y_1,\dots,y_n\}$, then we have
\[
	\mathcal{M}\true_X \bigsqcup_{i\leq n}(\delta_{\text{inf}}\!\upharpoonright\! y_i) \; \text{ iff } \; X \text{ is infinite}.
\]
Note that the intuitionistic disjunction above can be used in $\INEX[2]$.
\end{example}

Because of locality property, there cannot be any single $\INEXset$-formula that would define the infinity of a team with arbitrary domain. For the same reason we cannot define this property in INEX for teams with infinite domain (even if it is fixed). To see this, consider a team $X$ for which $\dom(X)=\{x_i\mid i\in\mathbb{N}\}$ and $\{s(x_0x_1x_2\dots) \mid s\in X\} = \{0,1\}^\mathbb{N}$.
Now $X$ is infinite but $X\!\upharpoonright\!V$ is finite for every finite $V\subseteq\dom(X)$. But if $\varphi\in\INEXset$ such that $\fr(\varphi)\subseteq\dom(X)$, by locality $\varphi$ is true in $X$ if and only if it is true in $X\upharpoonright\fr(\varphi)$. 
Note that these restrictions hold for any logics with locality property -- such as dependence and independence logics. Therefore if our logic is local, we can only define infinity of teams that have a fixed finite domain.

Since infinity of a team is not downwards closed property, it cannot be expressed in dependence logic.\footnote{Infinity of a \emph{model}, however, can be expressed in dependence logic with a simple sentence such as $\Ee x\Ae y\Ee z\,(\dep(y,z)\wedge x\!\neq\!z))$ (\cite{Vaananen07}).}
By the results of Galliani \cite{Galliani12b} we know that it is expressible in independence logic. However, to our best knowledge, nobody has presented an explicit formula that would define this property in independence logic (or any other logic with team semantics). If we would express this property in independence logic by directly using the translations given by Galliani \cite{Galliani12b}, the corresponding formula would be very complicated. 


In Example~\ref{ex: Infinity of a team} we defined infinity of a team with a rather simple formula that was constructed in an intuitive way by using methods introduced in this paper. This was just one particular example, but we hope that this demonstrates how our work for this framework can be useful for deriving concrete formulas defining desired properties of models or teams. 


\section{Conclusion}\label{sec: Conclusion}

In this paper we have studied the expressive power of inclusion and exclusion atoms. These two simple types of atoms make a natural pair by a having a dualistic relationship. Our main topic of interest was how does the arity of these atoms affect their expressive power. We showed that INEX has a strict arity hierarchy, and when restricted to INEX[$k$], there is a natural connection to ESO[$k$] on the level of both sentences and formulas. 

When translating from ESO to INEX, atomic $k$-ary relations translate naturally into $k$-ary inclusion atoms and analogously negated atomic $k$-ary relations translate into $k$-ary exclusion atoms. This simple correspondence is somewhat surprising considering how different these two logics seem by first glance. It is also interesting how in team semantics we can use quantified $k$-tuples of variables to simulate quantified $k$-ary relation variables. That is, we can ``embed'' second order quantification within the standard first order quantification. 

Even though INEX is equivalent with independence logic in general, it turned out that the relationship is not so clear when restricting the arities of atoms. Despite being closely related, these logics are of different nature. It~appears that inclusion and exclusion atoms are naturally connected with relations while dependence and independence atoms are with functions. 

The translations we used between INEX[$k$] and ESO are very different from the ones used between $k$-ary dependence and independence logics and ESO on the level of sentences (\cite{Durand12,Kontinen13a}). The methods in our proofs also differ from Galliani's translations (\cite{Galliani12b}) between INEX-formulas and ESO-formulas (without any arity restriction). These earlier translations are not compositional in the sense that they work only for ESO-formulas in a special normal form. Our translations are all compositional and, particularly the ones in Theorems~\ref{the: Expressing EXC[k]-formula with ESO[k]} and \ref{the: Expressing ESO[k]-formula with INEX[k]}, very natural and do not increase the size of the formulas significantly.

In the translation from ESO[$k$] to INEX[$k$], term value preserving disjunction played an important role. This is a useful operator for any logic with team semantics, since the splitting of the team when evaluating disjunctions tends to lose information. This operator has several natural variants that would be interesting to be studied further either independently or in by adding them to some other logic with team semantics -- such as dependence logic.

We also introduced natural semantics for inclusion and exclusion quantifiers and defined them in INEX. With these quantifiers we can restrict the range of quantification to certain sets of values within a team. One practical application of this was the relativization of formulas. \emph{Existential} inclusion and exclusion quantifiers turned out to be equivalent with inclusion and exclusion atoms, and this naturally lead to the definition of inclusion and exclusion friendly logics. However, as discussed in Subsection~\ref{ssec: Properties of quantifiers}, properties of \emph{universal} inclusion and exclusion quantifiers are not so clear, and there still some open questions.

By our results on formulas, we know that all ESO[$k$]-definable properties of at most $k$-ary relations (in teams) can be defined in INEX[$k$], and that ESO[$k$] is the upper bound for the expressive power of $\INEX[k]$. But these limits are not strict and when the arity of relations gets higher than the arity of atoms, things get quite interesting.  In a future work we will pursue this topic further by showing, e.g, that for $2$-ary relations there are some very simple $\ESO[0]$-definable properties which are not INEX[$1$]-definable, but there are also some quite complex INEX[$1$]-definable properties which are not $\ESO[0]$-definable.


\bibliographystyle{abbrv} 
\bibliography{RR-citations}





\appendix

\section*{Appendix: Proof for claim \ref{AT3}}\label{sec: Appendix}

In this appendix we will present the proof for Claim \ref{AT3} that was used in the translation from INC[$k$] to ESO[$k$]. With the assumptions of Theorem \ref{the: Expressing INC[k]-formula with ESO[k]}, the following equivalence holds for all $\mu\in\subf(\varphi)$ and all suitable teams $X$:
\begin{align*}
	\mathcal{M}\true_X\mu\, \;\text{ iff }\;\,&\text{there exist }
	A_1,\dots,A_n\subseteq M^k \text{ s.t. } \mathcal{M}[\tuple A/\tuple P]\true_X\mu', \\
	&\text{and for any } i\leq n \text{ and tuple of elements } \tuple a\in A_i \\[-0,1cm]
	&\text{there exists } s\in X \text{ s.t. } \mathcal{M}[\tuple A/\tuple P]\true_{\{s[\tuple a/\tuple u\,]\}}\mu_i'.
\end{align*}
We first need to present two more claims. The first one is about the trivial cases when the inclusion atom $(\tuple{\vphantom\wedge\smash{t}}_1\inc\tuple{\vphantom\wedge\smash{t}}_2)_i$ does not occur in a formula $\mu\in\subf(\varphi)$.

\begin{customclaim}{I}\label{AT1}
Let $\mu\in\subf(\varphi)$ and assume that $ i\leq n$ is an index such that the atom $(\tuple{\vphantom\wedge\smash{t}}_1\subseteq\tuple{\vphantom\wedge\smash{t}}_2)_i$ does not occur in $\mu$. Then the following equivalences hold:
\begin{align*}
	\mathcal{M}\true_X\mu'\;\;\; \text{ iff }\;\;\; \mathcal{M}\true_X\mu_i'\;\;\;
	\text{ iff }\;\;\; \mathcal{M}\true_{\{s[\tuple a/\tuple u\,]\}}\mu_i' 
	\,\text{ for all } \tuple a\in M^k \text{ and } s\in X.
\end{align*}
\end{customclaim}
\begin{proof}
The first equivalence can be proved by a simple induction on $\mu$. Since $(\tuple{\vphantom\wedge\smash{t}}_1\inc\tuple{\vphantom\wedge\smash{t}}_2)_i$ does not occur in $\mu$, the definitions of $\mu'$ and $\mu_i'$ differ only when $\mu=\Ae x\,\psi$. But then $\mu'=\Ae x\,\psi'$ and $\mu_i'=\Ee x\,\psi_i'\wedge\Ae x\,\psi'$ are equivalent, by assuming that $\psi'\equiv\psi_i'$ by the inductive hypothesis.
For the second equivalence, we note that since $(\tuple{\vphantom\wedge\smash{t}}_1\inc\tuple{\vphantom\wedge\smash{t}}_2)_i$ does not occur in~$\mu$, none of the variables in $\tuple u$ occurs in~$\mu_i'$. Hence the second equivalence holds by locality and flatness.
\end{proof}

The second claim we need shows that we can always extend a team that satisfies the formula $\mu'$ with any teams that satisfy $\mu_i'$ for some $i\leq n$.

\begin{customclaim}{II}\label{AT2}
Let $\mu\in\subf(\varphi)$, $i\leq n$ and let $X_1,X_2$ be teams for which it holds that $\dom(X_2)=\dom(X_1)\!\cup\!\vr(\tuple u)$. Then the following implication holds:
\begin{equation*}
	\text{If } \mathcal{M}\true_{X_1}\mu' \text{ and } \mathcal{M}\true_{X_2}\mu_i', 
	\;\text{ then } \mathcal{M}\true_{X_1\cup X_2^*}\mu',
	\text{ where } X_2^*:=X_2\upharpoonright\dom(X_1).
\end{equation*}
\end{customclaim}

\begin{proof}
We prove this claim by structural induction on $\mu$:

\begin{itemize}[leftmargin=*]
\item Let $\mu$ be a literal and suppose that $\mathcal{M}\true_{X_1}\mu'$ and $\mathcal{M}\true_{X_2}\mu_i'$. Now by locality $\mathcal{M}\true_{X_2^*}\mu_i'$ and furthermore by flatness $\mathcal{M}\true_{X_1\cup X_2^*}\mu'$.

\item Let $\mu = (\tuple{\vphantom\wedge\smash{t}}_1\subseteq\tuple{\vphantom\wedge\smash{t}}_2)_j$ for some $j\leq n$. 
Suppose that $\mathcal{M}\true_{X_1}((\tuple{\vphantom\wedge\smash{t}}_1\inc\tuple{\vphantom\wedge\smash{t}}_2)_j)'$ and $\mathcal{M}\true_{X_2}((\tuple{\vphantom\wedge\smash{t}}_1\inc\tuple{\vphantom\wedge\smash{t}}_2)_j)_i'$. By the first assumption, $\mathcal{M}\true_{X_1}P_j\tuple{\vphantom\wedge\smash{t}}_1$. If $j\neq i$, then $\mathcal{M}\true_{X_2}P_j\tuple{\vphantom\wedge\smash{t}}_1$, and if $j=i$, then $\mathcal{M}\true_{X_2}(\tuple u=\tuple{\vphantom\wedge\smash{t}}_2\,)\wedge P_j\tuple{\vphantom\wedge\smash{t}}_1$. Thus in either case we have $\mathcal{M}\true_{X_2}P_j\tuple{\vphantom\wedge\smash{t}}_1$. Since none of the variables in $\tuple u$ occurs in $\vr(\tuple{\vphantom\wedge\smash{t}}_1)$, by locality $\mathcal{M}\true_{X_2^*}P_j\tuple{\vphantom\wedge\smash{t}}_1$. Because $\mathcal{M}\true_{X_1}P_j\tuple{\vphantom\wedge\smash{t}}_1$ and $\mathcal{M}\true_{X_2^*}P_j\tuple{\vphantom\wedge\smash{t}}_1$, by flatness we have $\mathcal{M}\true_{X_1\cup X_2^*}P_j\tuple{\vphantom\wedge\smash{t}}_1$. That is, $\mathcal{M}\true_{X_1\cup X_2^*}((\tuple{\vphantom\wedge\smash{t}}_1\inc\tuple{\vphantom\wedge\smash{t}}_2)_j)'$.

\item The case $\mu = \psi\wedge\theta$ is straightforward to prove.


\item Let $\mu = \psi\vee\theta$. 
Suppose that $\mathcal{M}\true_{X_1}(\psi\vee\theta)'$ and $\mathcal{M}\true_{X_2}(\psi\vee\theta)_i'$. By the first assumption, $\mathcal{M}\true_{X_1}\psi'\vee\theta'$, i.e. there are $Y_1,Y_1'\subseteq X_1$ s.t. $Y_1\cup Y_1'=X_1$, $\mathcal{M}\true_{Y_1}\psi'$ and $\mathcal{M}\true_{Y_1'}\theta'$.

Suppose first that $(\tuple{\vphantom\wedge\smash{t}}_1\!\subseteq\!\tuple{\vphantom\wedge\smash{t}}_2)_i$ occurs in $\psi$. Because then $(\psi\vee\theta)_i'=\psi_i'$, we have $\mathcal{M}\true_{X_2}\psi_i'$, and thus by the inductive hypothesis $\mathcal{M}\true_{Y_1\cup X_2^*}\psi'$. Now
$(Y_1\cup X_2^*)\cup Y_1' = (Y_1\cup Y_1')\cup X_2^* = X_1\cup X_2^*$.
Hence $\mathcal{M}\true_{X_1\cup X_2^*}\psi'\vee\theta'$, i.e. $\mathcal{M}\true_{X_1\cup X_2^*}(\psi\vee\theta)'$. The case when $(\tuple{\vphantom\wedge\smash{t}}_1\subseteq\tuple{\vphantom\wedge\smash{t}}_2)_i$ occurs in $\theta$ is analogous.

Suppose then that $(\tuple{\vphantom\wedge\smash{t}}_1\subseteq\tuple{\vphantom\wedge\smash{t}}_2)_i$ does not occur in $\psi\vee\theta$. Then $\mathcal{M}\true_{X_2}\psi_i'\vee\theta_i'$, i.e. there exist $Y_2,Y_2'\subseteq X_2$ s.t. $Y_2\cup Y_2'=X_2$, $\mathcal{M}\true_{Y_2}\psi_i'$ and $\mathcal{M}\true_{Y_2'}\theta_i'$. By the inductive hypothesis, $\mathcal{M}\true_{Y_1\cup Y_2^*}\psi'$ and $\mathcal{M}\true_{Y_1'\cup Y_2'^*}\theta'$. Now
\begin{align*}
	(Y_1\cup Y_2^*)\cup(Y_1'\cup Y_2'^*) &= (Y_1\cup Y_1')\cup(Y_2^*\cup Y_2'^*) \\
	&= (Y_1\cup Y_1')\cup(Y_2\cup Y_2')^* = X_1\cup X_2^*.
\end{align*}
Hence $\mathcal{M}\true_{X_1\cup X_2^*}\psi'\vee\theta'$, i.e. $\mathcal{M}\true_{X_1\cup X_2^*}(\psi\vee\theta)'$.

\item Let $\mu = \Es x\,\psi$. 
Suppose that $\mathcal{M}\true_{X_1}(\Es x\,\psi)'$ and $\mathcal{M}\true_{X_2}(\Es x\,\psi)_i'$. Hence $\mathcal{M}\true_{X_1}\Es x\,\psi'$ and $\mathcal{M}\true_{X_2}\Es x\,\psi_i'$. Hence there are $F_1:X_1\rightarrow\mathcal{P}^*(M)$ and $F_2:X_2\rightarrow\mathcal{P}^*(M)$ s.t. $\mathcal{M}\true_{X_1[F_1/x]}\psi'$ and $\mathcal{M}\true_{X_2[F_2/x]}\psi_i'$. By the inductive hypothesis $\mathcal{M}\true_{X_1[F_1/x]\cup(X_2[F_2/x])^*}\psi'$. Let
\begin{align*}
	&F_2^*:X_2^*\rightarrow\mathcal{P}^*(M), 
	\quad s\mapsto \bigl\{b\in F_2(s[\tuple a/\tuple u\,])\,\mid\, s[\tuple a/\tuple u\,]\in X_2,\,\tuple a\in M^k\bigr\} \\
	&F:X_1\cup X_2^*\rightarrow\mathcal{P}^*(M), \quad
	\begin{cases}
		s\mapsto F_1(s)\; \text{ if } s\in X_1\setminus X_2^* \\
		s\mapsto F_2^*(s)\; \text{ if } s\in X_2^*\setminus X_1 \\
		s\mapsto F_1(s)\cup F_2^*(s)\; \text{ if } s\in X_1\cap X_2^*.
	\end{cases}
\end{align*}
By the definitions of $F_2^*$ and $F$, we have
\[
	X_1[F_1/x]\cup(X_2[F_2/x])^* = X_1[F_1/x]\cup X_2^*[F_2^*/x] = (X_1\cup X_2^*)[F/x].
\]
Hence $\mathcal{M}\true_{X_1\cup X_2^*}\Es x\,\psi'$, i.e. $\mathcal{M}\true_{X_1\cup X_2^*}(\Es x\,\psi)'$.

\item Let $\mu = \Ae x\,\psi$. 
Suppose $\mathcal{M}\true_{X_1}(\Ae x\,\psi)'$ and $\mathcal{M}\true_{X_2}(\Ae x\,\psi)_i'$. Thus $\mathcal{M}\true_{X_1}\Ae x\,\psi'$ and $\mathcal{M}\true_{X_2}\Es x\,\psi_i'\wedge\Ae x\,\psi'$. Since $\mathcal{M}\true_{X_2}\Ae x\,\psi'$, by locality  $\mathcal{M}\true_{X_2^*}\Ae x\,\psi'$. Thus by flatness $\mathcal{M}\true_{X_1\cup X_2^*}\Ae x\,\psi'$, i.e. $\mathcal{M}\true_{X_1\cup X_2^*}(\Ae x\,\psi)'$. 
\end{itemize}
\vspace{-0,8cm}
\end{proof}

\smallskip

\noindent
Now we are finally ready to prove Claim \ref{AT3}:
\begin{align*}
	\mathcal{M}\true_X\mu\, \;\text{ iff }\;&\text{there exist }
	A_1,\dots,A_n\subseteq M^k \text{ s.t. } \mathcal{M}'\true_X\mu', \\
	&\text{and for any } i\leq n \text{ and } \tuple a\in A_i \text{ there is } s\in X
	\text{ s.t. } \mathcal{M}'\true_{\{s[\tuple a/\tuple u\,]\}}\mu_i',
\end{align*}
where $\mathcal{M}':=\mathcal{M}[\tuple A/\tuple P]$. 

\begin{proof}
We first examine the special case when $X=\emptyset$:
For the other direction of the equivalence, suppose that $\mathcal{M}\true_X\mu$. Let $A_i:=\emptyset$ for each $i\leq n$ and let $\mathcal{M}':=\mathcal{M}[\tuple A/\tuple P]$. Because $X=\emptyset$, we have $\mathcal{M}'\true_X\mu'$, and since $A_i=\emptyset$ for each $i\leq n$, the rest of the right side of the equivalence holds trivially. The other direction is clear since $\mathcal{M}\true_\emptyset\mu$ holds always.  We may thus assume that $X\neq\emptyset$. 
We prove the claim by structural induction on $\mu$:
\begin{itemize}[leftmargin=*]
\item If $\mu$ is a literal, the claim holds trivially (we can choose $A_i:=\emptyset$ for each $i\leq n$ when proving the other direction of the equivalence).

\item Let $\mu=(\tuple{\vphantom\wedge\smash{t}}_1\subseteq\tuple{\vphantom\wedge\smash{t}}_2)_j$ for some $j\leq n$.
Suppose first that $\mathcal{M}\true_X \tuple{\vphantom\wedge\smash{t}}_1\subseteq\tuple{\vphantom\wedge\smash{t}}_2$. Let
\[
	\mathcal{M}' := \mathcal{M}[\tuple A/\tuple P], \text{ where } 
	A_i :=
	\begin{cases}
		X(\tuple{\vphantom\wedge\smash{t}}_1)\, \text{ if } i=j \\
		\emptyset\qquad\, \text{ else.} 
	\end{cases}
\]
Since $X(\tuple{\vphantom\wedge\smash{t}}_1) = A_j = P_j^\mathcal{M'}$, we have $\mathcal{M}'\true_{X}P_j\tuple{\vphantom\wedge\smash{t}}_1$, i.e. $\mathcal{M}'\true_X (\tuple{\vphantom\wedge\smash{t}}_1\subseteq\tuple{\vphantom\wedge\smash{t}}_2)'$.

Let $i\!\in\!\{1,\dots,n\}\setminus\{j\}$ and let $\tuple a\in A_i$. Since $\mathcal{M}'\true_{X}P_j\tuple{\vphantom\wedge\smash{t}}_1$ we can choose any $s\in X$~$(\neq\emptyset)$, and then by flatness $\mathcal{M}'\true_{\{s\}}P_j\tuple{\vphantom\wedge\smash{t}}_1$. By locality we have $\mathcal{M}'\true_{\{s[\tuple a/\tuple u\,]\}}P_j\tuple{\vphantom\wedge\smash{t}}_1$, i.e. $\mathcal{M}'\true_{\{s[\tuple a/\tuple u\,]\}} (\tuple{\vphantom\wedge\smash{t}}_1\subseteq\tuple{\vphantom\wedge\smash{t}}_2)_i'$.

Let then $i=j$ and $\tuple a\in A_j'$. Because $\tuple a\in X(\tuple{\vphantom\wedge\smash{t}}_1)$, there is $s\in X$ s.t. $s(\tuple{\vphantom\wedge\smash{t}}_1)=\tuple a$. Since $\mathcal{M}\true_X\tuple{\vphantom\wedge\smash{t}}_1\inc\tuple{\vphantom\wedge\smash{t}}_2$, there is $s'\in X$ s.t. $s'(\tuple{\vphantom\wedge\smash{t}}_2)=s(\tuple{\vphantom\wedge\smash{t}}_1)$. Now $s'(\tuple{\vphantom\wedge\smash{t}}_2)=\tuple a$, and thus $s'[\tuple a/\tuple u\,](\tuple u)=s'[\tuple a/\tuple u\,](\tuple{\vphantom\wedge\smash{t}}_2)$, i.e. $\mathcal{M}'\true_{\{s'[\tuple a/\tuple u\,]\}} \tuple u\!=\!\tuple{\vphantom\wedge\smash{t}}_2$. Since $\mathcal{M}'\true_{X}P_j\tuple{\vphantom\wedge\smash{t}}_1$, by locality and flatness $\mathcal{M}'\true_{\{s'[\tuple a/\tuple u\,]\}}P_j\tuple{\vphantom\wedge\smash{t}}_1$. Thus $\mathcal{M}'\true_{\{s'[\tuple a/\tuple u\,]\}}\tuple u\!=\!\tuple{\vphantom\wedge\smash{t}}_2\wedge P_j\tuple{\vphantom\wedge\smash{t}}_1$, i.e. $\mathcal{M}'\true_{\{s'[\tuple a/\tuple u\,]\}}(\tuple{\vphantom\wedge\smash{t}}_1\subseteq\tuple{\vphantom\wedge\smash{t}}_2)_i'$.

\medskip
Suppose then that there exist $A_1,\dots,A_n\subseteq M^k$ s.t. $\mathcal{M}'\true_X(\tuple{\vphantom\wedge\smash{t}}_1\!\subseteq\!\tuple{\vphantom\wedge\smash{t}}_2)'$, and for each $i\leq n$ and $\tuple a\in A_i$ there exists $s\in X$ s.t. $\mathcal{M}'\true_{\{s[\tuple a/\tuple u\,]\}} (\tuple{\vphantom\wedge\smash{t}}_1\subseteq\tuple{\vphantom\wedge\smash{t}}_2)_i'$.

For the sake of proving that $\mathcal{M}\true_X\tuple{\vphantom\wedge\smash{t}}_1\subseteq\tuple{\vphantom\wedge\smash{t}}_2$, let $s\in X$. Since $\mathcal{M}'\true_{X}P_j\tuple{\vphantom\wedge\smash{t}}_1$, by flatness we have $\mathcal{M}'\true_{\{s\}}P_j\tuple{\vphantom\wedge\smash{t}}_1$. Now $s(\tuple{\vphantom\wedge\smash{t}}_1)\in P_j^{\mathcal{M}'}=A_j$ and thus there is $s'\in X$ such that $\mathcal{M}'\true_{\{s'[s(\tuple{\vphantom\wedge\smash{t}}_1)/\tuple u\,]\}}\tuple u\!=\!\tuple{\vphantom\wedge\smash{t}}_2\wedge P_j\tuple{\vphantom\wedge\smash{t}}_1$. In particular, $\mathcal{M}'\true_{\{s'[s(\tuple{\vphantom\wedge\smash{t}}_1)/\tuple u\,]\}}\tuple u=\tuple{\vphantom\wedge\smash{t}}_2$, and thus we have 
$s(\tuple{\vphantom\wedge\smash{t}}_1) = s'[s(\tuple{\vphantom\wedge\smash{t}}_1)/\tuple u\,](\tuple u) = s'[s(\tuple{\vphantom\wedge\smash{t}}_1)/\tuple u\,](\tuple{\vphantom\wedge\smash{t}}_2) = s'(\tuple{\vphantom\wedge\smash{t}}_2)$.
%

\item Let $\mu=\psi\wedge\theta$. 
Suppose first that $\mathcal{M}\true_X \psi\wedge\theta$. Hence $\mathcal{M}\true_X\psi$ and $\mathcal{M}\true_X\theta$. By the inductive hypothesis there are $B_1,\dots,B_n\subseteq M^k$ s.t. $\mathcal{M}[\tuple B/\tuple P]\true_X\psi'$ and there are $B_1',\dots,B_n'\subseteq M^k$ s.t. $\mathcal{M}[\tuple B'/\tuple P]\true_X\theta'$. Moreover, for all $i\leq n$ and tuples $\tuple a\in B_i$ and $\tuple a'\in B_i'$ there are $s,s'\in X$ s.t. $\mathcal{M}[\tuple B/\tuple P]\true_{\{s[\tuple a/\tuple u\,]\}}\psi_i'$ and $\mathcal{M}[\tuple B'/\tuple P]\true_{\{s'[\tuple a'/\tuple u\,]\}}\theta_i'$. Let
\[
	\mathcal{M}' := \mathcal{M}[\tuple A/\tuple P], \text{ where }
	A_i :=
	\begin{cases}
		B_i\; \text{ if $P_i$ occurs in $\psi'$} \\
		B_i'\; \text{ if $P_i$ does not occur in $\psi'$.}  
	\end{cases}
\]
Because none of $P_i$ can occur in both  $\psi'$ and~$\theta'$, we clearly have $\mathcal{M}'\true_X\psi'$ and $\mathcal{M}'\true_X\theta'$. Hence $\mathcal{M}'\true_{X}\psi'\wedge\theta'$, i.e. $\mathcal{M}'\true_{X}(\psi\wedge\theta)'$.

Let $i\leq n$ and let $\tuple a\in A_i$. Suppose first that $P_i$ occurs in $\psi'$. Now $\tuple a\in B_i$, and thus there is $s\in X$ s.t. $\mathcal{M}[\tuple B/\tuple P]\true_{\{s[\tuple a/\tuple u\,]\}}\psi_i'$. Relation variables not occurring in $\psi'$ do not occur in $\psi_i'$ either, and thus $\mathcal{M'}\true_{\{s[\tuple a/\tuple u\,]\}}\psi_i'$. Because $(\tuple{\vphantom\wedge\smash{t}}_1\subseteq\tuple{\vphantom\wedge\smash{t}}_2)_i$ does not occur in~$\theta$ and $\mathcal{M}'\true_X\theta'$, by Claim \ref{AT1} we have $\mathcal{M}'\true_{\{s[\tuple a/\tuple u\,]\}}\theta_i'$. Thus $\mathcal{M}'\true_{\{s[\tuple a/\tuple u\,]\}}\psi_i'\wedge\theta_i'$, i.e. $\mathcal{M}'\true_{\{s[\tuple a/\tuple u\,]\}}(\psi\wedge\theta)_i'$. The case when $P_i$ occurs in $\theta'$ is analogous.
Finally suppose that $P_i$ does not occur in $\psi'$ nor $\theta'$, whence $(\tuple{\vphantom\wedge\smash{t}}_1\subseteq\tuple{\vphantom\wedge\smash{t}}_2)_i$ does not occur in $\psi\wedge\theta$. Since $\mathcal{M}'\true_{X}(\psi\wedge\theta)'$, we can choose any $s\in X\;(\neq\emptyset)$, and then by Claim~\ref{AT1} we have $\mathcal{M}'\true_{\{s[\tuple a/\tuple u\,]\}}(\psi\wedge\theta)_i'$.

\medskip
Suppose then that there are $A_1,\dots,A_n\subseteq M^k$ s.t. $\mathcal{M}'\true_X(\psi\wedge\theta)'$, and for every $i\leq n$ and $\tuple a\in A_i$ there exists $s\in X$ s.t. $\mathcal{M}'\true_{\{s[\tuple a/\tuple u\,]\}}(\psi\wedge\theta)_i'$. Now we have  $\mathcal{M}'\true_X\psi'\wedge\theta'$, i.e. $\mathcal{M}'\true_X\psi'$ and $\mathcal{M}'\true_X\theta'$. 
Because $(\psi\wedge\theta)_i'=\psi_i'\wedge\theta_i'$, for every $i\leq n$ and $\tuple a\in A_i$ there exists $s\in X$ such that $\mathcal{M}'\true_{\{s[\tuple a/\tuple u\,]\}}\psi_i'$. Since also $\mathcal{M}'\true_X\psi'$, by the inductive hypothesis $\mathcal{M}\true_X\psi$. Analogously we have $\mathcal{M}\true_X\theta$, and thus $\mathcal{M}\true_X\psi\wedge\theta$.

\item Let $\mu=\psi\vee\theta$.
Suppose first that $\mathcal{M}\true_X\psi\vee\theta$. Thus there are $Y,Y'\subseteq X$ s.t. $Y\cup Y'=X$, $\mathcal{M}\true_Y\psi$ and $\mathcal{M}\true_{Y'}\theta$. By the inductive hypothesis there are $B_1,\dots,B_n,B_1',\dots,B_n'\subseteq M^k$ s.t. $\mathcal{M}[\tuple B/\tuple P]\true_Y\psi'$ and $\mathcal{M}[\tuple B'/\tuple P]\true_{Y'}\theta'$. In addition, for every $i\leq n$, $\tuple  a\in B_i$ and $\tuple a'\in B_i'$ there exist $s\in Y$ and $s'\in Y'$ s.t. $\mathcal{M}[\tuple B/\tuple P]\true_{\{s[\tuple a/\tuple u\,]\}}\psi_i'$ and $\mathcal{M}[\tuple B'/\tuple P]\true_{\{s'[\tuple a'/\tuple u\,]\}}\theta_i'$. Let
\[
	\mathcal{M}' := \mathcal{M}[\tuple A/\tuple P], \text{ where }
	A_i :=
	\begin{cases}
		B_i\; \text{ if $P_i$ occurs in $\psi'$} \\
		B_i'\; \text{ if $P_i$ does not occur in $\psi'$.}
	\end{cases}
\]
Because none of $P_i$ can occur in both  $\psi'$ and~$\theta'$, we clearly have $\mathcal{M}'\true_Y\psi'$ and $\mathcal{M}'\true_{Y'}\theta'$. Therefore $\mathcal{M}'\true_{X}\psi'\vee\theta'$, i.e. $\mathcal{M}'\true_{X}(\psi\vee\theta)'$.

Let $i\leq n$ and $\tuple a\in A_i$. Suppose first that $P_i$ occurs in $\psi'$. Now $A_i=B_i$, and thus, by the inductive hypothesis, there is $s\in Y$ $(\subseteq X)$ such that $\mathcal{M}[\tuple B/\tuple P]\true_{\{s[\tuple a/\tuple u\,]\}}\psi_i'$. Relation variables not occurring in $\psi'$ do not occur in $\psi_i'$ either, and thus $\mathcal{M'}\true_{\{s[\tuple a/\tuple u\,]\}}\psi_i'$, i.e. $\mathcal{M'}\true_{\{s[\tuple a/\tuple u\,]\}}(\psi\vee\theta)_i'$. The case when $P_i$ occurs in $\theta'$ is analogous.
Suppose then that $P_i$ does not occur in $\psi'$ nor $\theta'$, whence $(\tuple{\vphantom\wedge\smash{t}}_1\subseteq\tuple{\vphantom\wedge\smash{t}}_2)_i$ does not occur in $\psi\vee\theta$. Since $\mathcal{M}'\true_{X}(\psi\vee\theta)'$, we can choose any $s\in X\;(\neq\emptyset)$, and then by Claim~\ref{AT1} we have $\mathcal{M}'\true_{\{s[\tuple a/\tuple u\,]\}}(\psi\vee\theta)_i'$.

\medskip

Suppose then that there are $A_1,\dots,A_n\subseteq M^k$ s.t. $\mathcal{M}'\true_X(\psi\vee\theta)'$, and for all $i\leq n$ and $\tuple a\in A_i$ there is $s_{i,\vec a}\in X$ s.t. $\mathcal{M}'\true_{\{s_{i,\vec a}[\tuple a/\tuple u\,]\}}(\psi\vee\theta)_i'$. 
Since $\mathcal{M}'\true_X\psi'\vee\theta'$, there are $Y,Y'\subseteq X$ s.t. $Y\cup Y'=X$, $\mathcal{M}'\true_Y\psi'$ and $\mathcal{M}'\true_{Y'}\theta'$. 
We define the teams $Y_i$ and $Y_i'$, for every $i\leq n$, and the teams $Z,Z'\subseteq X$:
\begin{align*}
	Y_i&:= \{s_{i,\vec a}[\vec a/\vec u\,]\mid \vec a\in A_i\} \; \text{ if } 
		(\tuple{\vphantom\wedge\smash{t}}_1\inc\tuple{\vphantom\wedge\smash{t}}_2)_i \text{ occurs in } \psi,
		\quad \text{ and else } Y_i:=\emptyset. \\
	Y_i'&:= \{s_{i,\vec a}[\vec a/\vec u\,]\mid \vec a\in A_i\} \; \text{ if } 
		(\tuple{\vphantom\wedge\smash{t}}_1\inc\tuple{\vphantom\wedge\smash{t}}_2)_i \text{ occurs in } \theta,
		\quad \text{ and else } Y_i':=\emptyset. \\
	Z&:=Y\cup(\bigcup_{i\leq n}Y_i\upharpoonright\dom(X)), \quad
	Z':=Y'\cup(\bigcup_{i\leq n}Y_i'\upharpoonright\dom(X)). \\[-0,8cm]
\end{align*}
We then show that for every $i\leq n$ it holds that $\mathcal{M}'\true_{Y_i}\psi_i'$. Let $i\leq n$. If $(\tuple{\vphantom\wedge\smash{t}}_1\inc\tuple{\vphantom\wedge\smash{t}}_2)_i$ does not occur in $\psi$, then $Y_i=\emptyset$ whence trivially $\mathcal{M}'\true_{Y_i}\psi_i'$. Suppose then that $(\tuple{\vphantom\wedge\smash{t}}_1\inc\tuple{\vphantom\wedge\smash{t}}_2)_i$ occurs in $\psi$. Now $(\psi\vee\theta)_i'=\psi_i'$ and thus $\mathcal{M}'\true_{\{r\}}\psi_i'$ for every $r\in Y_i$. By flatness we thus have $\mathcal{M}'\true_{Y_i}\psi_i'$.
Since $\mathcal{M}'\true_Y\psi'$ and $\mathcal{M}'\true_{Y_i}\psi_i'$ for every $i\leq n$, we can apply Claim \ref{AT2} for each $i\leq n$ to obtain $\mathcal{M}'\true_{Z}\psi'$. By a symmetric argumentation $\mathcal{M}'\true_{Z'}\theta'$.

We then show that $\mathcal{M}\true_Z\psi$. We may suppose that $Z\neq\emptyset$, since else trivially $\mathcal{M}\true_Z\psi$. Let $i\leq n$ and let $\tuple a\in A_i$. Suppose first that $(\tuple{\vphantom\wedge\smash{t}}_1\inc\tuple{\vphantom\wedge\smash{t}}_2)_i$ occurs in $\psi$. Now $(\psi\vee\theta)_i'=\psi_i'$ and thus there is $s_{i,\vec a}\in X$ s.t. $\mathcal{M}'\true_{\{s_{i,\vec a}[\tuple a/\tuple u\,]\}}\psi_i'$. By the definition of $Y_i$ we must have $s_{i,\vec a}[\tuple a/\tuple u\,]\in Y_i$ and moreover $s_{i,\vec a}\in Z$. Suppose then that $(\tuple{\vphantom\wedge\smash{t}}_1\inc\tuple{\vphantom\wedge\smash{t}}_2)_i$ does not occur in $\psi$. Then we can choose any assignment $s\in Z \,(\neq\emptyset)$, whence by Claim~\ref{AT1} we have $\mathcal{M}'\true_{\{s[\tuple a/\tuple u\,]\}}\psi_i'$. Hence, by the inductive hypothesis, $\mathcal{M}\true_Z\psi$.
We can analogously deduce $\mathcal{M}\true_{Z'}\theta$. Since $Z\cup Z'=X$, we have $\mathcal{M}\true_{X}\psi\vee\theta$.

\item Let $\mu=\Es x\,\psi$. 
Suppose first that $\mathcal{M}\true_X\Es x\,\psi$, i.e. there is $F:X\rightarrow\mathcal{P}^*(M)$, s.t. $\mathcal{M}\true_{X[F/x]}\psi$. By the inductive hypothesis there are $A_1,\dots,A_n\subseteq M^k$ s.t. $\mathcal{M}'\true_{X[F/x]}\psi'$, where $\mathcal{M}'\!:=\!\mathcal{M}[\tuple A/\tuple P]$. Moreover, for all $i\!\leq\! n$ and $\tuple a\!\in\! A_i$ there is $r\in X[F/x]$ s.t. $\mathcal{M}'\true_{\{r[\tuple a/\tuple u\,]\}}\psi_i'$. Since $\mathcal{M}'\true_{X[F/x]}\psi'$, we have $\mathcal{M}'\true_X \Es x\,\psi'$, i.e. $\mathcal{M}'\true_X (\Es x\,\psi)'$.

Let $i\leq n$ and let $\tuple a\in A_i$. Now there is $r\in X[F/x]$ s.t. $\mathcal{M}'\true_{\{r[\tuple a/\tuple u\,]\}}\psi_i'$. Since $r\in X[F/x]$, there is $s\in X$ and $b\in F(s)$ s.t. $r=s[b/x]$. Let $F':\{s[\tuple a/\tuple u\,]\}\rightarrow\mathcal{P}^*(M)$ s.t. $s[\tuple a/\tuple u\,]\mapsto\{b\}$.
Since $\{s[\tuple a/\tuple u\,]\}[F'/x]=\{r[\tuple a/\tuple u\,]\}$, we have $\mathcal{M}'\true_{\{s[\tuple a/\tuple u\,]\}}\Es x\,\psi_i'$, i.e. $\mathcal{M}'\true_{\{s[\tuple a/\tuple u\,]\}}(\Es x\,\psi)_i'$.

\medskip

Suppose then that there are $A_1,\dots,A_n\subseteq M^k$ s.t. $\mathcal{M}'\true_X(\Es x\,\psi)'$, and for every $i\leq n$ and $\tuple a\in A_i$ there is $s_{i,\vec a}\in X$ s.t. $\mathcal{M}'\true_{\{s_{i,\vec a}[\tuple a/\tuple u\,]\}}(\Es x\,\psi)_i'$.
Now there is $F:X\rightarrow\mathcal{P}^*(M)$ s.t. $\mathcal{M}\true_{X[F/x]}\psi'$.  Furthermore, for each $i\leq n$ and $\tuple a\in A_i$ there is $F_{i,\tuple a}:\{s_{i,\tuple a}[\tuple a/\tuple u\,]\}\rightarrow\mathcal{P}^*(M)$ s.t. $\mathcal{M}'\true_{\{s_{i,\tuple a}[\tuple a/\tuple u\,]\}[F_{i,\tuple a}/x]}\psi_i'$. For each $i\leq n$ let
\[
	X_i' := \bigcup_{\tuple a\in A_i} \{s_{i,\tuple a}[\tuple a/\tuple u\,]\}[F_{i,\tuple a}/x].
\]
By flatness $\mathcal{M}'\true_{X_i'}\psi_i'$ for each $i\leq n$. Let $F':X\rightarrow\mathcal{P}^*(M)$ s.t.
\[
	s\mapsto F(s)\cup\{b\in F_{i,\tuple a}(s_{i,\tuple a}[\tuple a/\tuple u\,])\,\mid\, i\leq n,\, \tuple a\in A_i \,
	\text{ s.t. } s=s_{i,\tuple a}\}.
\]
By the definitions of $F'$ and $X_i'$ ($i\leq n$) we have
\[
	X[F/x]\cup \bigl(\bigcup_{i\leq n}X_i'\upharpoonright\dom(X[F/x])\bigr) = X[F'/x].
\]
Thus by applying Claim \ref{AT2} for each $i\leq n$, we obtain $\mathcal{M}'\true_{X[F'/x]}\psi'$. Moreover, now for each $i\leq n$ and $\tuple a\in A_i$ there is $r\in X[F'/x]$ s.t. $\mathcal{M}'\true_{\{r[\tuple a/\tuple u\,]\}}\psi_i'$. Thus, by the inductive hypothesis, $\mathcal{M}\true_{X[F'/x]}\psi$, i.e. $\mathcal{M}\true_X\Es x\,\psi$.

\item Let $\mu=\Ae x\,\psi$.
Suppose first that $\mathcal{M}\true_X \Ae x\,\psi$, i.e. $\mathcal{M}\true_{X[M/x]}\psi$. By the inductive hypothesis there are $A_1,\dots,A_n\subseteq M^k$ s.t. $\mathcal{M}'\true_{X[M/x]}\psi'$. Moreover, for each $i\leq n$ and $\tuple a\in A_i$ there exists $r\in X[M/x]$ s.t. $\mathcal{M}'\true_{\{r[\tuple a/\tuple u\,]\}}\psi_i'$. Now $\mathcal{M}'\true_X\Ae x\,\psi'$, i.e. $\mathcal{M}'\true_X(\Ae x\,\psi)'$.

Let $i\leq n$ and let $\tuple a\in A_i$. Now there is $r\in X[M/x]$ s.t. $\mathcal{M}'\true_{\{r[\tuple a/\tuple u\,]\}}\psi_i'$. Since $r\in X[M/x]$, there is $s\in X$ and $b\in M$ s.t. $r=s[b/x]$. Let $F:\{s[\tuple a/\tuple u\,]\}\rightarrow\mathcal{P}^*(M)$ s.t. $s[\tuple a/\tuple u\,]\mapsto\{b\}$.
Now $\{s[\tuple a/\tuple u\,]\}[F/x]=\{r[\tuple a/\tuple u\,]\}$, and therefore $\mathcal{M}'\true_{\{s[\tuple a/\tuple u\,]\}}\Es x\,\psi_i'$. Since $\mathcal{M}'\true_X\Ae x\,\psi'$, by flatness and locality we have $\mathcal{M}'\true_{\{s[\tuple a/\tuple u\,]\}}\Ae x\,\psi'$. Hence $\mathcal{M}'\true_{\{s[\tuple a/\tuple u\,]\}}\Es x\,\psi_i'\wedge\Ae x\,\psi'$, i.e. $\mathcal{M}'\true_{\{s[\tuple a/\tuple u\,]\}} (\Ae x\,\psi)_i'$.

\medskip
Suppose then that there are $A_1,\dots,A_n\subseteq M^k$ s.t. $\mathcal{M}'\true_X(\Ae x\,\psi)'$, and that for each $i\leq n$ and $\tuple a\in A_i$ there is $s\in X$ s.t. $\mathcal{M}'\true_{\{s[\tuple a/\tuple u\,]\}}(\Ae x\,\psi)_i'$. Now we have $\mathcal{M}'\true_X\Ae x\,\psi'$, i.e. $\mathcal{M}'\true_{X[M/x]}\psi'$.

Let $i\leq n$ and let $\tuple a\in\!A_i$. Now there is $s\in\!X$ s.t. $\mathcal{M}'\true_{\{s[\tuple a/\tuple u\,]\}}\!\Es x\,\psi_i'\wedge\Ae x\,\psi'$ and thus there is $F:\{s[\tuple a/\tuple u\,]\}\rightarrow\mathcal{P}^*(M)$ s.t. $\mathcal{M}'\true_{\{s[\tuple a/\tuple u\,]\}[F/x]}\psi_i'$. Let $b\in F(s[\tuple a/\tuple u])$ and let $r:=s[b/x]$, whence $r\in X[M/x]$ and $r[\tuple a/\tuple u]\in\{s[\tuple a/\tuple u\,]\}[F/x]$. Now by flatness $\mathcal{M}'\true_{\{r[\tuple a/\tuple u]\}}\psi_i'$. Therefore, by the inductive hypothesis, $\mathcal{M}\true_{X[M/x]}\psi$, i.e. $\mathcal{M}\true_X \Ae x\,\psi$.
\end{itemize}
\vspace{-0,7cm}
\end{proof}

\end{document}